\newcommand{\bbZ}{{\Bbb Z}}
\newcommand{\bbR}{{\Bbb R}}
\newcommand{\bbN}{{\Bbb N}}
\newcommand{\bbP}{{\Bbb P}}
\newcommand{\bbE}{{\Bbb E}}
\newcommand{\E}{{\Bbb E}}
\newcommand{\diag}{\textnormal{diag}}
\DeclareMathOperator*{\plim}{\mathit{p}-lim}
\newcommand{\vecoper}{\textnormal{vec}}
\newcommand{\W}{\mathbf{W}}
\newcommand{\B}{\mathbf{B}}
\newcommand{\p}{\mathbf{p}}
\renewcommand{\cite}{\citeyear}
\begin{document}

\title{Wavelet eigenvalue regression in high dimensions
\thanks{P.A.\ and H.W.\ were partially supported by ANR-16-CE33-0020 MultiFracs, France. H.W.\ was also partially supported by ANR-18-CE45-0007 MUTATION. G.D.'s long term visits to ENS de Lyon were supported by the school, the CNRS and the Simons Foundation collaboration grant $\#714014$. The authors also gratefully acknowledge the support and resources from the Center for High Performance Computing at the University of Utah as well as the high performance computing (HPC) resources and services provided by Technology Services at Tulane University.}
\thanks{{\em AMS Subject classification}. Primary: 62H25, 60B20. Secondary: 42C40, 60G18.}
\thanks{{\em Keywords and phrases}: wavelets, operator self-similarity, random matrices.}
\thanks{The authors would like to thank an anonymous reviewer whose comments and suggestions improved the manuscript.}
}

\author{Patrice Abry \\ Univ Lyon, ENS de Lyon,\\ Univ Claude Bernard,
CNRS, \\ Laboratoire de Physique, \\ F-69342 Lyon, France
\and   B.\ Cooper Boniece \\  Department of Mathematics\\ University of Utah
\and   Gustavo Didier \\ Mathematics Department\\ Tulane University
\and   Herwig Wendt\\ IRIT-ENSEEIHT, CNRS (UMR 5505),\\ Universit\'{e} de Toulouse, France}

\maketitle

\begin{abstract}
In this paper, we construct the wavelet eigenvalue regression methodology (Abry and Didier~\cite{abry:didier:2018:dim2,abry:didier:2018:n-variate}) in high dimensions. We assume that possibly non-Gaussian, finite-variance $p$-variate measurements are made of a low-dimensional $r$-variate ($r \ll p$) fractional stochastic process with non-canonical scaling coordinates and in the presence of additive high-dimensional noise. The measurements are correlated both time-wise and between rows. Building upon the asymptotic and large scale properties of wavelet random matrices in high dimensions, the wavelet eigenvalue regression is shown to be consistent and, under additional assumptions, asymptotically Gaussian in the estimation of the fractal structure of the system. We further construct a consistent estimator of the effective dimension $r$ of the system that significantly increases the robustness of the methodology. The estimation performance over finite samples is studied by means of simulations.
\end{abstract}

\section{Introduction}

A \textit{wavelet} is a unit $L^2(\bbR)$-norm function that annihilates polynomials. For a fixed (octave) $j \in \bbN$, a \textit{wavelet random matrix} is given by
\begin{equation}\label{e:W(a2^j)_intro}
{\mathbf W}(a(n)2^j) = \frac{1}{n_{a,j}}\sum^{n_{a,j}}_{k=1} D(a(n)2^j,k)D(a(n)2^j,k)^*.
\end{equation}
In \eqref{e:W(a2^j)_intro}, $^*$ denotes transposition, $n_{a,j} = n/a(n)2^j$ is the number of wavelet-domain observations for a sample size $n$, and each random vector $D(a(n)2^j,k)$ is the wavelet transform of a multivariate stochastic process $Y$ at the dyadic scale $a(n)2^j$ and shift $k \in \bbZ$. The entries of $\{D(a(n)2^j,k)\}_{k \in \bbN}$ are generally correlated. The so-named \textit{wavelet eigenanalysis} methodology consists in using the behavior across scales of the eigenvalues of wavelet random matrices to study the fractality of stochastic systems (Abry and Didier \cite{abry:didier:2018:dim2,abry:didier:2018:n-variate}). In this paper, we build upon recent results on the properties of large wavelet random matrices (Abry et al.\ \cite{abry:boniece:didier:wendt:2022:wavelet_eigenanalysis}) to construct a wavelet eigenvalue regression methodology in high dimensions. The (possibly non-Gaussian) underlying stochastic process $Y = \{Y(t)\}_{t \in \bbZ}$ is assumed to have the form
\begin{equation}\label{e:Y(t)}
Y(t) = P X(t) + Z(t), \quad t \in \bbZ.
\end{equation}
In \eqref{e:Y(t)}, both $Y$ and the noise term $Z = \{Z(t)\}_{t \in \bbZ}$ are (high-dimensional) $p=p(n)$-variate processes, $P=P(n)$ is a rectangular coordinates matrix and, for fixed $r \in \bbN$, $X = \{X(t)\}_{t \in \bbZ}$ is a (low-dimensional) $r$-variate fractional process. In particular, the measurements $Y$ are correlated time-wise and between rows. We show that, if the ratio $a(n)p(n)/n$ converges to a positive constant, the wavelet eigenvalue regression provides a consistent and, under additional assumptions, asymptotically Gaussian estimator of the underlying low-dimensional fractal structure of the system. In addition, we construct a consistent estimator of the effective dimension $r$ of the system that significantly increases the robustness of the statistical methodology. The performance of the statistical protocols over finite samples is further studied by means of simulations.

Since the 1950s, the spectral behavior of large dimensional random matrices has attracted considerable attention from the mathematical research community. In quantum mechanics, for example, random matrices are of great interest as statistical mechanical models of infinite dimensional and possibly unknown Hamiltonian operators (e.g., Mehta and Gaudin \cite{mehta:gaudin:1960}, Dyson \cite{dyson:1962}, Arous and Guionnet \cite{arous:guionnet:1997}, Soshnikov \cite{soshnikov:1999}, Mehta \cite{mehta:2004}, Deift \cite{deift:2007}, Anderson et al.\ \cite{anderson:guionnet:zeitouni:2010}, Erd\H{o}s et al.\ \cite{erdos:yau:yin:2012}). Random matrices have also naturally emerged as one essential mathematical framework for the modern era of ``Big Data" (Briody \cite{briody:2011}), when hundreds to several tens of thousands of time series get recorded and stored on a daily basis. In coping with the data deluge, one is often interested in understanding the behavior of random constructs such as the spectral distribution of sample covariance matrices when the dimension $p$ is comparable to the sample size $n$, including instances with dependent measurements (e.g., Tao and Vu \cite{tao:vu:2012}, Paul and Aue \cite{paul:aue:2014}, Basu and Michailidis \cite{basu:michailidis:2015}, Giraud \cite{giraud:2015}, Yao et al.\ \cite{yao:zheng:bai:2015},  Chakrabarty et al.\ \cite{chakrabarty:hazra:sarkat:2016}, Merlev\`{e}de and Peligrad \cite{merlevede:peligrad:2016}, Che \cite{che:2017}, Taylor and Salhi \cite{taylor:salhi:2017}, Wang et al.\ \cite{wang:aue:paul:2017}, Zhang and Wu \cite{zhang:wu:2017}, Erd\H{o}s et al.\ \cite{erdos:kruger:schroder:2019}, Horv\'{a}th and Rice \cite{horvath:rice:2019}, Merlev\`{e}de et al.\ \cite{merlevede:najim:tian:2019}, Wainwright \cite{wainwright:2019}, Bourguin et al.~\cite{bourguin:diez:tudor:2021}).

In turn, \textit{scale invariance} manifests itself in a wide range of natural and social phenomena such as in climate studies (Isotta et al.\ \cite{isotta:etal:2014}), critical phenomena (Sornette \cite{sornette:2006}), dendrochronology (Bai and Taqqu~\cite{bai:taqqu:2018}), hydrology (Benson et al.\ \cite{benson:baeumer:scheffler:2006}) and turbulence (Kolmogorov~\cite{Kolmogorovturbulence}). In a multidimensional setting, scaling behavior does not always appear along standard coordinate axes, and often involves multiple (scaling) relations. A $\bbR^r$-valued stochastic process $X$ is called \textit{operator self-similar} (o.s.s.; Laha and Rohatgi \cite{laha:rohatgi:1981}, Hudson and Mason \cite{hudson:mason:1982}) if it exhibits the scaling property
\begin{equation}\label{e:def_ss}
\{X(ct)\}_{t\in\bbR} \stackrel {\textnormal{f.d.d.}}{=} \{c^H X(t)\}_{t\in\bbR}, \quad c>0.
\end{equation}
In \eqref{e:def_ss}, $H$ is some (Hurst) matrix $H$ whose eigenvalues have real parts lying in the interval $(0,1]$ and $c^H := \exp\{\log(c) H\} = \sum^{\infty}_{k=0} \frac{(\log(c) H)^k}{k!}$. A canonical model for multivariate fractional systems is operator fractional Brownian motion (ofBm), namely, a Gaussian, o.s.s., stationary-increment stochastic process (Maejima and Mason \cite{maejima:mason:1994}, Mason and Xiao \cite{mason:xiao:2002}, Didier and Pipiras \cite{didier:pipiras:2012}). In particular, ofBm is the natural multivariate generalization of the classical fBm (Mandelbrot and Van Ness \cite{mandelbrot:vanness:1968}).

In the characterization of scaling properties, the use of eigenanalysis was first proposed in Meerschaert and Scheffler \cite{meerschaert:scheffler:1999,meerschaert:scheffler:2003} and Becker-Kern and Pap \cite{becker-kern:pap:2008}. It has also been used in the cointegration literature (e.g., Phillips and Ouliaris \cite{phillips:ouliaris:1988}, Li et al.\ \cite{li:pan:yao:2009}). In Abry and Didier \cite{abry:didier:2018:n-variate,abry:didier:2018:dim2}, \textit{wavelet eigenanalysis} is proposed in the construction of a general methodology for the statistical identification of the scaling (Hurst) structure of ofBm in low dimensions.

Wavelet random matrices were used in high-dimensional modeling contexts first in Abry et al.\ \cite{abry:wendt:didier:2018:detecting_highdim} and Boniece et al.\ \cite{boniece:wendt:didier:abry:2019}. In Abry et al.\ \cite{abry:boniece:didier:wendt:2022:wavelet_eigenanalysis}, the fundamental mathematical properties of the eigenvalues $\lambda_{1}\big({\mathbf W}(a(n)2^j)\big) \leq \hdots \leq \lambda_{p(n)}\big({\mathbf W}(a(n)2^j)\big)$ of large wavelet random matrices are, to the best of our knowledge, for the first time considered. In this paper, we build upon such fundamental properties to construct robust statistical methodology in high dimensions. The measurements are assumed to be of the form \eqref{e:Y(t)}, where the fractional behavior of $X$ is characterized by a scaling matrix of the Jordan form
\begin{equation}\label{e:H=PHdiag(h1,...,hn)P^(-1)H}
H = P_H \hspace{0.5mm}\textnormal{diag}(h_1,\hdots,h_r)\hspace{0.5mm}P^{-1}_{H},
\end{equation}
with real eigenvalues. The measurements $Y$ display correlation time-wise and between rows. In applications, models of the form \eqref{e:Y(t)} and related models appear, for example, in neuroscience, fMRI imaging and signal processing (Ciuciu et al.\ \cite{ciuciu:varoquaux:abry:sadaghiani:kleinschmidt:2012}, Liu et al.\ \cite{liu:aue:paul:2015}; cf.\ Chauduri et al.\ \cite{chaudhuri:gercek:pandey:peyrache:fiete:2019}, Stringer et al.\ \cite{stringer:pachitariu:steinmetz:carandini:harris:2019}) and in econometrics (e.g., Brown \cite{brown:1989}, Zhang et al.\ \cite{zhang:robinson:yao:2019}). The goal of statistical inference is to characterize the low-dimensional ($r$-variate) fractal structure in high-dimensional data. This requires robustness not only with respect to non-canonical scaling coordinates, as in fixed dimensions (the so-called \textit{amplitude }and \textit{dominance effects}; see Abry and Didier \cite{abry:didier:2018:dim2}), but also to high-dimensional environmental noise. Starting from measurements $Y$, we propose a multiscale wavelet eigenvalue regression estimator
$$
\{ \widehat{\ell}_{i} \}_{i = 1,\hdots,p} :=  \Big\{\frac{1}{2}\Big(\sum_{j=j_1}^{j_2} w_j \log_2 \lambda_{i}\big({\mathbf W}(a(n)2^j)\big) -1\Big)\Big\}_{i = 1,\hdots,p},
$$
for fixed wavelet octaves $j_1$, $j_2$ and particular weights $w_j$ (see Definition \ref{def:eigenvalue_estimator}). We show that the fixed-dimensional subvector
\begin{equation}\label{e:h_q=ell_i}
\{\widehat{h}_{q}\}_{q=1,\hdots,r} := \{ \widehat{\ell}_{i} \}_{i = p(n)-r+1,\hdots,p(n)}
\end{equation}
is a consistent and, under additional assumptions, asymptotically Gaussian vector estimator of the scaling eigenvalue structure $\{h_q\}_{q=1,\hdots,r}$ of $X$ (Theorems \ref{t:h-hat_consistency} and \ref{t:h-hat_asympt_normality}; see also Figure \ref{fig:ellhat}). The asymptotic properties of the estimator are grounded in the high-dimensional behavior of the eigenvalues of wavelet random matrices. In fact, as shown in Abry et al.\ \cite{abry:boniece:didier:wendt:2022:wavelet_eigenanalysis}, as $n$ and $a(n)$ grow the $r$ largest wavelet eigenvalues satisfy scaling relations of the form
\begin{equation}\label{e:lambda_p-r+q_scales}
\lambda_{p(n)-r+q}\big({\mathbf W}(a(n)2^j)\big) \stackrel{\bbP}\sim \xi_q(2^j) \hspace{0.5mm}a(n)^{2h_q + 1}, \quad q=1,\hdots,r,
\end{equation}
for deterministic $\xi_q(2^j)$. Also, under additional assumptions, they display asymptotically Gaussian fluctuations (cf.\ Theorems \ref{t:lim_n_a*lambda/a^(2h+1)} and \ref{t:asympt_normality_lambdap-r+q}). The high-dimensional statistical analysis requires taking the three-way limit
\begin{equation}\label{e:three-fold_lim}
\lim_{n \rightarrow \infty} \frac{p(n) a(n)}{n} =: c \in [0,\infty).
\end{equation}
For the sake of illustration and comparison, note that traditional analysis of large sample covariance matrices usually involves the ratio $\lim_{n \rightarrow \infty} p(n)/n$ and the largest eigenvalue often exhibits universality in the form of Tracy--Widom fluctuations (Bai and Silverstein \cite{bai:silverstein:2010}, Lee and Schnelli \cite{lee:schnelli:2016}). Vis-\`{a}-vis Abry et al.\ \cite{abry:boniece:didier:wendt:2022:wavelet_eigenanalysis}, the main mathematical contribution of this paper stems from the fact that the wavelet eigenvalue regression demands careful analysis of the rate of convergence of the rescaled $r$ largest eigenvalues of the deterministic wavelet matrices $\bbE \W (a(n)2^j)$. This involves studying the rate of angular convergence of the associated (deterministic) wavelet eigenvectors in terms of the high-dimensional coordinates $P = P(n)$ (see Proposition \ref{p:|lambdaq(EW)-xiq(2^j)|_bound_2}).

Since \eqref{e:h_q=ell_i} presupposes knowledge of the dimension $r$ of the hidden fractional process $X$, we further put forth a wavelet eigenvalue regression-based estimator $\widehat{r}$ of $r$ (see Definition \ref{def:r-hat} and Theorem \ref{t:r-hat->r}). The construction is based on the fact that, by contrast with \eqref{e:lambda_p-r+q_scales}, the lower eigenvalues $\lambda_{\ell}\big({\mathbf W}(a(n)2^j)\big)$, $\ell = 1,\hdots,p(n)-r$, of wavelet random matrices are bounded in probability (cf.\ Theorem \ref{t:lim_n_a*lambda/a^(2h+1)}). The use of the estimator $\widehat{r}$ greatly increases the robustness of the overall statistical methodology over finite samples by providing a quantitative measure of separation between the scaling behaviors of large and small wavelet log-eigenvalues (cf.\ Figure \ref{fig:ellhat}; for effective dimension estimation in different contexts, see, for example, Nadakuditi and Edelman \cite{nadakuditi:edelman:2008}, Little et al.\ \cite{little:lee:jung:maggioni:2009}, Lam and Yao \cite{lam:yao:2012}).

As in Abry et al.\ \cite{abry:boniece:didier:wendt:2022:wavelet_eigenanalysis}, for the sake of clarity and mathematical generality, the assumptions are stated directly in the wavelet domain (Section \ref{s:framework}). In particular, the measurements $Y$ are possibly non-Gaussian. A brief illustration of instances covered by the assumptions is provided in Examples \ref{ex:Prop_4_1_in_Abry_et_al}--\ref{ex:Haar_sub-Gaussian}.

We provide broad computational studies that demonstrate the convergence to Gaussianity of $\{\widehat{h}_{q}\}_{q=1,\hdots,r}$ in the high dimensional limit for various instances of $n_{a,j}$, $p$ and $r$. The experiments confirm that convergence takes place regardless of the (fixed) ratio $p/n_{a,j}$. We further use computational experiments to study the optimal thresholding procedure involved in the use of $\widehat{r}$.

This paper is organized as follows. In Section \ref{s:framework}, we provide the basic wavelet framework, definitions and wavelet-domain assumptions used throughout the paper. In Section \ref{s:main}, we state and discuss the main results on the wavelet eigenvalue regression. In Section \ref{s:MC}, we display and discuss the broad simulation studies. In Section \ref{s:conclusion}, we lay out conclusions and discuss several open problems that this work leads to. All proofs can be found in the appendix.

\begin{center}
\begin{figure}[h!]
\centering
\setlength{\tabcolsep}{2pt}
\begin{tabular}{ccc}
& $\widehat{\ell}_i,\;p/n_{j_2} = 1/2$ & $\widehat{\ell}_i,\;p/n_{j_2} = 1/4$ \\
\rotatebox[origin=t]{90}{local $\widehat{\ell}_i$\hskip-60mm}&\includegraphics[width=0.45\linewidth, trim=50 20 50 25, clip]{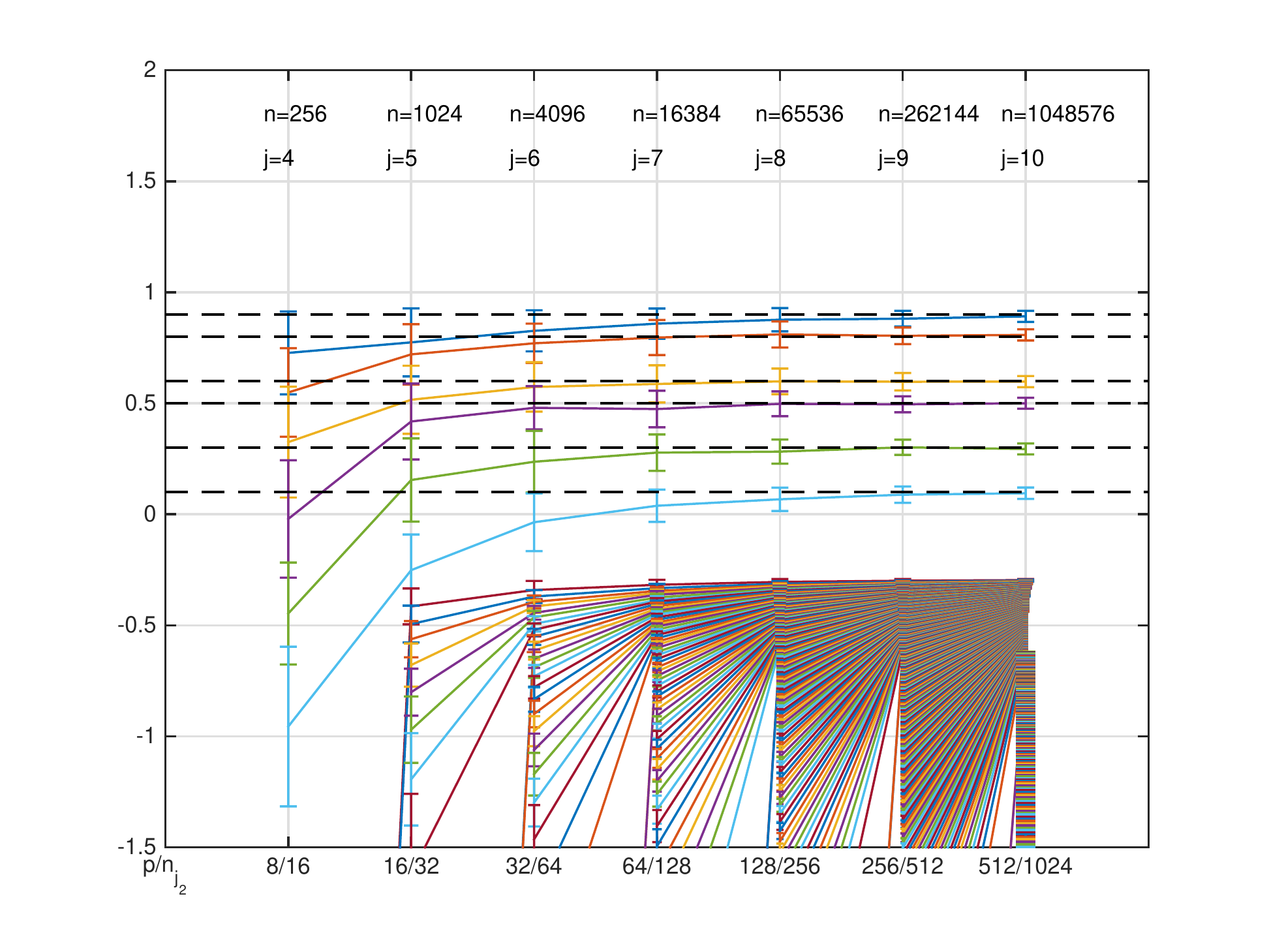}
&\includegraphics[width=0.45\linewidth, trim=50 20 50 25, clip]{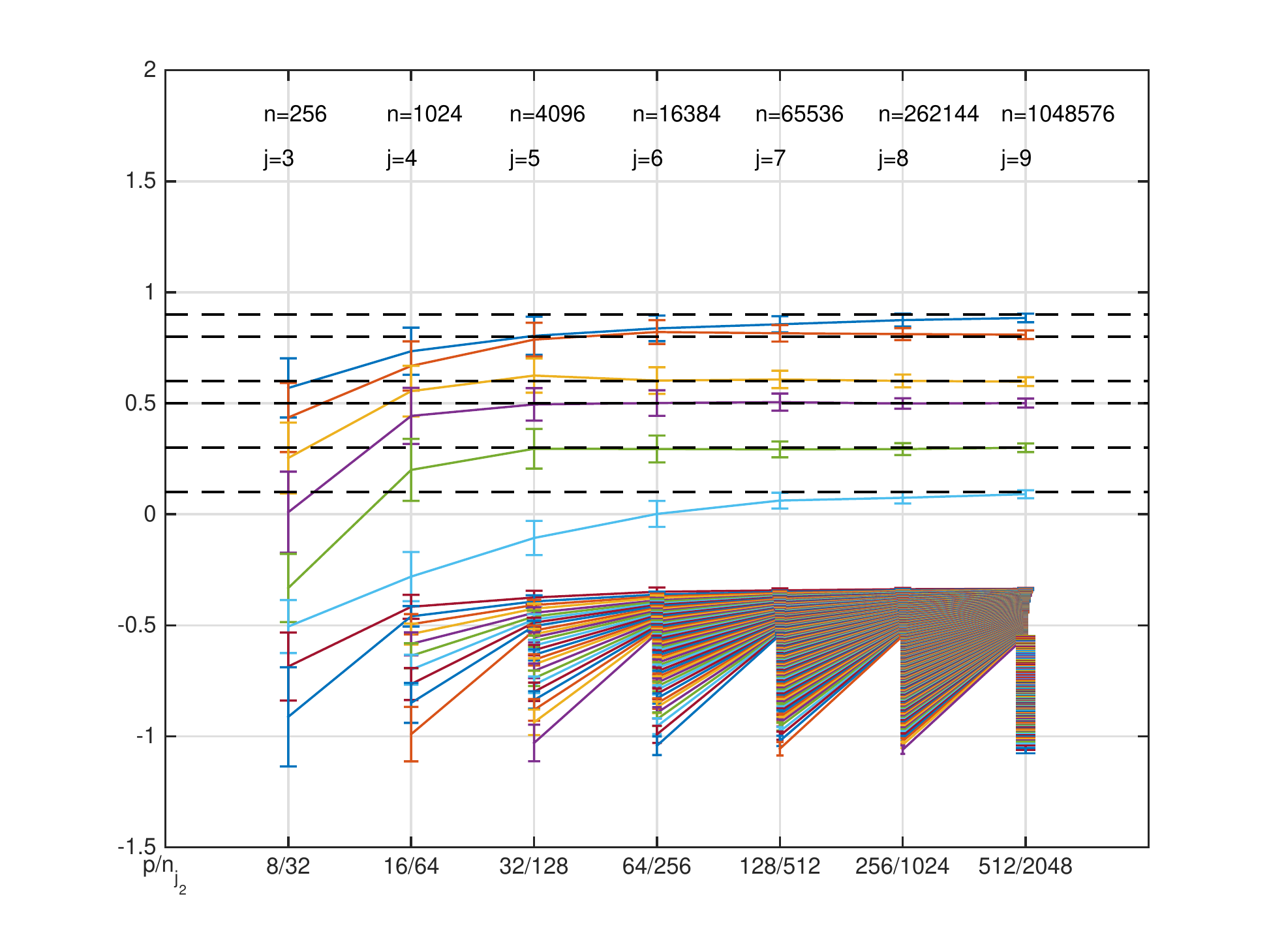}\\
& $\widehat{\ell}_i,\;p/n_{j_2} = 1/2$ & $\widehat{\ell}_i,\;p/n_{j_2} = 1/4$ \\
\rotatebox[origin=t]{90}{local $\widehat{\ell}_i$\hskip-60mm}&\includegraphics[width=0.45\linewidth, trim=50 20 50 25, clip]{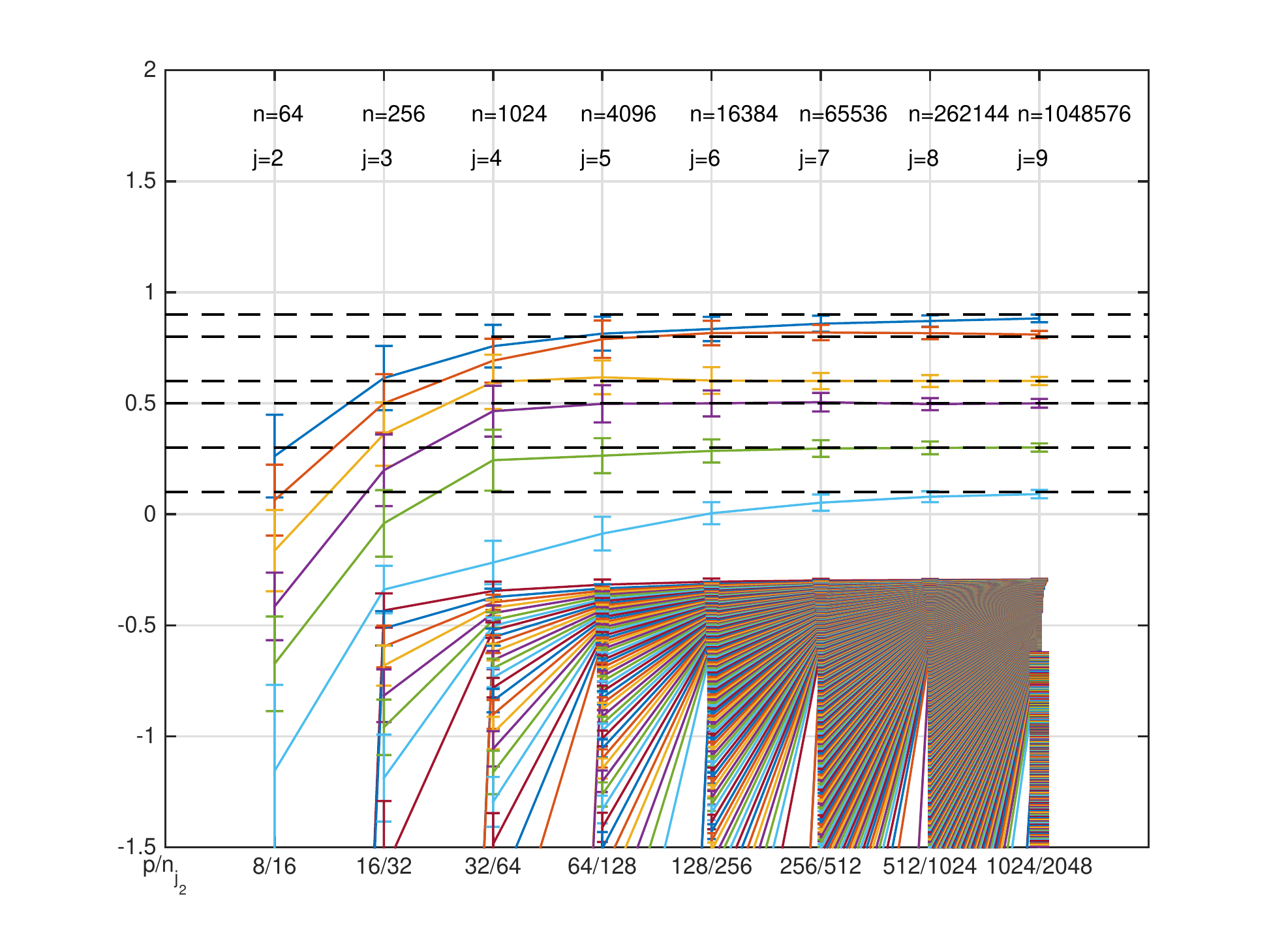}
&\includegraphics[width=0.45\linewidth, trim=50 20 50 25, clip]{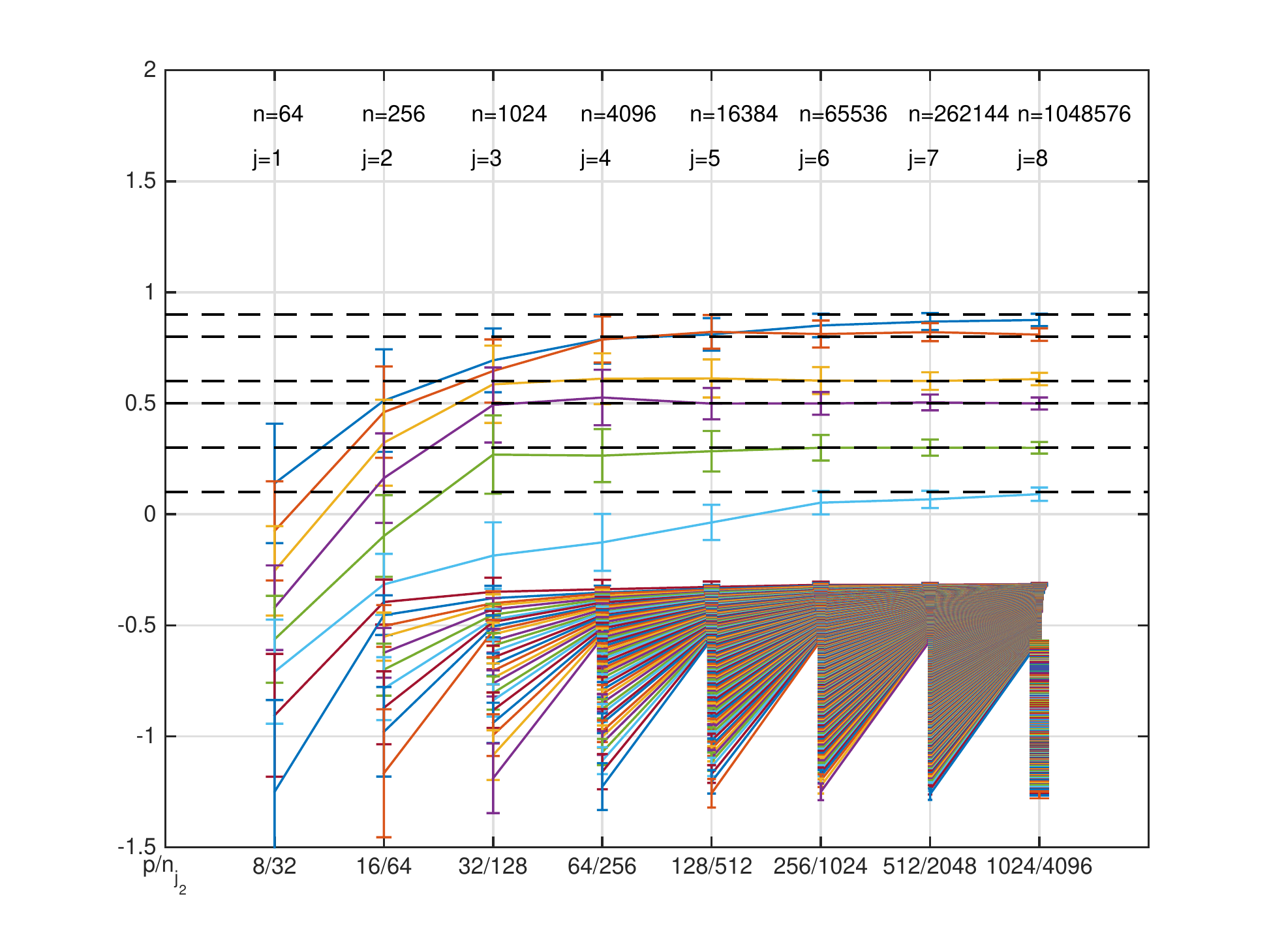}\\
\end{tabular}
\caption[The high-dimensional asymptotic behavior of $\widehat \ell_i$]{\label{fig:ellhat}{\textbf{The high-dimensional asymptotic behavior of $\widehat \ell_i$.}} The plots above show the asymptotic behavior of local estimates $\widehat{\ell}_q$ around each $j$ as $p$ and $n_j = n/2^j$ increase with their ratio fixed at ${p/n_j}= c$, with $c=1/2$ (left column) and $c=1/4$ (right column).  There are $r=6$ scaling eigenvalues $h_i\in (0.1,0.3, 0.5,0.6,0.8,0.9)$. For each $j$, the reported values $\widehat\ell_i$ are based on the octave range $(j_1,j_2)=(j-1,j+1)$. Note the $\widehat\ell_i$ associated with non-scaling eigenvalues stays below $0$ as $j$ increases in all instances, and the $\widehat{\ell}_q$ associated with the $r=6$ scaling eigenvalues approach their theoretical values.  With $c$ fixed (along each column), the larger sample size $n$ (bottom row) results in near-convergence at smaller $j$ (note the axes have been shifted to align the curves).  Likewise, as the $c$ decreases (between left and right columns), the near-convergence also occurs at smaller octaves $j$.  Compared to the wavelet log-eigenvalues themselves (not shown), the multiscale estimator $\widehat\ell_i$ displays reduced bias.}%
\end{figure}
\end{center}

\section{Framework and notation}\label{s:framework}

In this section, we describe the notation, basic wavelet framework and wavelet-domain assumptions used throughout the paper. Note that this is the same wavelet framework put forth in Abry et al.\ \cite{abry:boniece:didier:wendt:2022:wavelet_eigenanalysis}, Section 2.

For $m \in \bbN$, let $M(m,\bbR)$ %
be the space of $m \times m$ real-valued matrices, and %
 $M(m_1,m_2,\bbR)$ be the spaces of $m_1 \times m_2$ real-valued matrices. Let ${\mathcal S}(m,\bbR)$, ${\mathcal S}_{\geq 0}(m,\bbR)$ and ${\mathcal S}_{> 0}(m,\bbR)$, respectively, be the space of $m \times m$ symmetric matrices and the cones of $m\times m$ positive definite and positive semidefinite symmetric matrices.  %
Throughout the manuscript, $\|M\|$ denotes the operator norm of a matrix $M \in M(p,\bbR)$ in arbitrary dimension $p$, i.e.,
$\|M\| = \sqrt{\sup_{{\mathbf u} \in S^{p-1}} {\mathbf u}^* M M^* {\mathbf u}} = \sqrt{\sup_{{\mathbf u} \in S^{p-1}} {\mathbf u}^* M^* M {\mathbf u}}$. For any $M \in {\mathcal S}(p,\bbR)$,
\begin{equation}\label{e:lambda1(M)=<...=<lambdap(M)}
- \infty < \lambda_1(M) \leq \hdots \leq \lambda_q(M) \leq \hdots \leq \lambda_p(M) < \infty
\end{equation}
denotes the set of ordered eigenvalues of the matrix $M$. For $S = (s_{i_1,i_2})_{i_1,i_2=1,\dots,n}\in M(n,\mathbb{R})$, let
\begin{equation}\label{e:vec_definitions}
\textnormal{vec}_{{\mathcal S}}(S)=(s_{11},s_{21},\dots,s_{n1},s_{22},s_{32},\dots,s_{n2},\dots,s_{nn}).
\end{equation}
We use the asymptotic notation
\begin{equation}\label{e:OP(1),OmegaP(1)}
o_\bbP(1), \quad O_\bbP(1)%
\end{equation}
to describe sequences of random vectors or matrices whose operator norms vanish and are bounded above, respectively, in probability.

\subsection{Wavelet analysis}\label{s:wavelet_analysis}

Throughout the paper, we make use of a wavelet multiresolution analysis (MRA; see Mallat \cite{mallat:1999}, chapter 7), which decomposes $L^2(\mathbb{R})$ into a sequence of \textit{approximation} (low-frequency) and \textit{detail} (high-frequency) subspaces $V_j$ and $W_j$, respectively, associated with different scales of analysis $2^{j}$. In almost all mathematical statements, we make assumptions $(W1-W4)$ on the underlying wavelet MRA. Such assumptions are accurately described in Section \ref{s:MRA_assumptions}. In particular, we make use of a compactly supported wavelet basis. %

So, let $\phi$ and $\psi$ be the scaling and wavelet functions, respectively, associated with the wavelet MRA. We further suppose the wavelet coefficients stem from Mallat's pyramidal algorithm (Mallat \cite{mallat:1999}, chapter 7). For expositional simplicity, in our description of the algorithm we use the $\bbR^p$-valued process $Y$, though analogous developments also hold for both $X$ and $Z$. Initially, suppose an infinite time series
\begin{equation}\label{e:infinite_sample}
\{Y(k)\}_{k \in \bbZ},
\end{equation}
associated with the starting scale $2^j = 1$ (or octave $j = 0$), is available. Then, we can apply Mallat's algorithm to extract the so-named \textit{approximation} $(A(2^{j+1},\cdot))$ and \textit{detail} $(D(2^{j+1},\cdot))$ coefficients at coarser scales $2^{j+1}$ by means of an iterative procedure. In fact, as commonly done in the wavelet literature, we initialize the algorithm with the process
\begin{equation}\label{e:Btilde}
\bbR^p \ni \widetilde{Y}(t) := \sum_{k \in \bbZ} Y(k)\phi(t-k), \quad  t\in \bbR.
\end{equation}
By the orthogonality of the shifted scaling functions $\{\phi(\cdot - k)\}_{k \in \bbZ}$,
\begin{equation}\label{e:a(0,k)}
\bbR^p \ni  A(2^0,k)= \int_\bbR \widetilde{Y}(t)\phi(t-k)dt= Y(k), \quad k \in \bbZ
\end{equation}
(see Stoev et al.\ \cite{stoev:pipiras:taqqu:2002}, proof of Lemma 6.1, or Moulines et al.\ \cite{moulines:roueff:taqqu:2007:JTSA}, p.\ 160; cf.\ Abry and Flandrin \cite{abry:flandrin:1994}, p.\ 33). In other words, the initial sequence, at octave $j = 0$, of approximation coefficients is given by the original time series. To obtain approximation and detail coefficients at coarser scales, we use Mallat's iterative procedure
\begin{equation}\label{e:Mallat}
A(2^{j+1},k) = \sum_{k'\in \mathbb{Z}} u_{k'-2k}A(2^j,k'),\quad D(2^{j+1},k) =\sum_{k'\in \mathbb{Z}}v_{k'-2k} A(2^j,k'), \quad j \in \mathbb{N} \cup \{0\}, \quad k \in \mathbb{Z},
\end{equation}
where the (scalar) filter sequences $\{ u_k:=2^{-1/2}\int_\bbR \phi(t/2)\phi(t-k)dt \}_{k\in\bbZ}$, $\{v_k:=2^{-1/2}\int_\bbR\psi(t/2)\phi(t-k)dt\}_{k\in\bbZ}$ are called low- and high-pass MRA filters, respectively. Due to the assumed compactness of the supports of $\psi$ and of the associated scaling function $\phi$ (see condition \eqref{e:supp_psi=compact}), only a finite number of filter terms is nonzero, which is convenient for computational purposes (Daubechies \cite{daubechies:1992}). {Hereinafter, we assume without loss of generality that $\text{supp}(\phi) = \text{supp}(\psi)=[0,T]$ (cf.\ Moulines et al \cite{moulines:roueff:taqqu:2007:JTSA}, p.\ 160).} Moreover, the wavelet (detail) coefficients $D(2^j,k)$ of $Y$ can be expressed as
\begin{equation}\label{e:disc2}
\bbR^p \ni D(2^j,k) = \sum_{\ell\in\bbZ} Y(\ell)h_{j,2^jk-\ell},
\end{equation}
where the filter terms are defined by
\begin{equation}\label{e:hj,l}
\bbR \ni h_{j,\ell} =2^{-j/2}\int_\bbR\phi(t+\ell)\psi(2^{-j}t)dt.
\end{equation}
If we replace \eqref{e:infinite_sample} with the realistic assumption that only a finite length time series
\begin{equation}\label{e:finite_sample}
\{Y(k)\}_{k=1,\hdots,n}
\end{equation}
is available, writing $\widetilde{Y}^{(n)}(t) := \sum_{k=1}^{n}Y(k)\phi(t-k)$, we have $\widetilde{Y}^{(n)}(t) = \widetilde{Y}(t)$ for all $t\in (T,n+1)$ (cf.\ Moulines et al.\ \cite{moulines:roueff:taqqu:2007:JTSA}). Noting $D(2^j,k) = \int_\bbR \widetilde Y(t) 2^{-j/2}\psi(2^{-j}t - k)dt$ and $D^{(n)}(2^j,k) = \int_\bbR \widetilde Y^{(n)}(t)2^{-j/2}\psi(2^{-j}t - k)dt$, it follows that the finite-sample wavelet coefficients $D^{(n)}(2^j,k)$ of $\widetilde{Y}^{(n)}(t) $ are equal to $D(2^j,k)$ whenever $\textnormal{supp } \psi(2^{-j} \cdot - k) = (2^jk, 2^j (k+T)) \subseteq (T,n+1)$. In other words,
\begin{equation}\label{e:d^n=d}
D^{(n)}(2^j,k) = D(2^j,k),\qquad \forall (j,k)\in\{(j,k): 2^{-j}T\leq k\leq 2^{-j}(n+1)-T\}.
\end{equation}
Equivalently, such subset of finite-sample wavelet coefficients is not affected by the so-named \textit{border effect} (cf.\ Craigmile et al.\ \cite{craigmile:guttorp:Percival:2005}, Percival and Walden \cite{percival:walden:2006}, Didier and Pipiras \cite{didier:pipiras:2010}). Moreover, by \eqref{e:d^n=d} the number of such coefficients at octave $j$ is given by $n_j = \lfloor 2^{-j}(n+1-T)-T\rfloor$. Hence, $n_j \sim 2^{-j}n$ for large $n$. Thus, for notational simplicity we suppose
 \begin{equation}\label{e:nj=n/2^j}
 n_{j} = \frac{n}{2^j}
 \end{equation}
 holds exactly and only work with wavelet coefficients unaffected by the border effect.

\subsection{Wavelet random matrices and assumptions} \label{s:wavelet-assumptions}

Throughout the paper, we assume observations stem from the model \eqref{e:Y(t)}. The independent ``signal" $X = \{X(t)\}$ and the noise component $Z = \{Z(t)\}$ are $\bbR^r$-valued and $\bbR^p$-valued, respectively, where $r$ is fixed and $p = p(n)$.  The deterministic matrix $P = P(n)$ can be expressed as
\begin{equation}\label{e:P(n)}
M(p,r,\bbR) \ni P(n) = \Big( \mathbf{p}_{ 1}(n),\hdots,\mathbf{p}_{ r}(n) \Big), \quad \|\mathbf{p}_{ q}(n)\| = 1, \hspace{3mm} q = 1,\hdots,r.
\end{equation}
For the sake of clarity and mathematical generality, we state the appropriate conditions for the convergence in probability as well as for the asymptotic normality of wavelet log-eigenvalues directly in the wavelet domain. For $j \in \bbN$, $k \in \bbZ$ and a dyadic sequence $\{a(n)\}_{n \in \bbN}$, the random vectors
$$
D(a(n)2^j,k)  \in \bbR^p, \quad  D_X(a(n)2^j,k)   \in \bbR^r \quad \textnormal{and} \quad D_Z(a(n)2^j,k) \in \bbR^p
$$
denote the wavelet transform at scale $a(n)2^j$ of the stochastic processes $Y$, $X$ or $Z$, respectively. Whenever well-defined, the \textit{wavelet random matrix} -- or \textit{sample wavelet (co)variance} -- of $Y$ at scale $a(n)2^j$ is denoted by
\begin{equation}\label{e:W(a(nu))}
{\mathcal S}_{\geq 0}(p,\bbR) \ni \mathbf{W}(a(n) 2^j)) = \frac{1}{n_{a,j}}\sum^{n_{a,j}}_{k=1}D(a(n)2^j,k)D(a(n)2^j,k)^*, \quad n_{a,j} = \frac{n}{a(n)2^j}.
\end{equation}
The remaining wavelet random matrix terms $\W_X,\W_{XZ},\W_Z$ are naturally defined as
$$
\mathbf{W}_X(a(n) 2^j)) = \frac{1}{n_{a,j}}\sum^{n_{a,j}}_{k=1}D_X(a(n)2^j,k)D_X(a(n)2^j,k)^* \in {\mathcal S}_{\geq 0}(r,\bbR),
$$
$$
\mathbf{W}_{X,Z}(a(n) 2^j)) = \frac{1}{n_{a,j}}\sum^{n_{a,j}}_{k=1}D_X(a(n)2^j,k)D_Z(a(n)2^j,k)^* \in M(r,p,\bbR),
$$
\begin{equation}\label{e:WZ(a(n)2^j)}
\mathbf{W}_{Z}(a(n) 2^j)) = \frac{1}{n_{a,j}}\sum^{n_{a,j}}_{k=1}D_Z(a(n)2^j,k)D_Z(a(n)2^j,k)^* \in {\mathcal S}_{\geq 0}(p,\bbR).
\end{equation}
Further define the auxiliary random matrix
\begin{equation}\label{e:B-hat_a(2^j)}
\widehat{{\mathbf B}}_a(2^j) = P_H^{-1}a(n)^{-H-(1/2)I}{\mathbf W}_X(a(n)2^j)a(n)^{-H^*-(1/2)I} (P_H^*)^{-1}\in {\mathcal S}_{\geq 0}(r,\bbR),
\end{equation}
as well as its mean ${\mathbf B}_a(2^j) := \bbE \widehat{{\mathbf B}}_a(2^j)$. The matrix $\widehat{{\mathbf B}}_a(2^j)$ should be interpreted as a version of ${\mathbf W}_X(a(n)2^j)$ after compensating for scaling and non-canonical coordinates (cf.\ relation \eqref{e:B(2^j)_entrywise_scaling}, which displays canonical scaling). In \eqref{e:B-hat_a(2^j)}, we assume that the scaling matrix $H$ has the Jordan form
\begin{equation}\label{e:H_is_diagonalizable}
H = P_H \textnormal{diag}(h_1,\hdots,h_r) P^{-1}_H, \quad  P_H \in GL(r,\bbR), \quad -1/2 < h_1 \leq \hdots \leq h_r <\infty.
\end{equation}
We make use of the following assumptions in the main results of this paper (Section \ref{s:main}). For expository purposes, we first state the assumptions, and then provide some interpretation.

\medskip

\noindent {\sc Assumption $(A1)$}: The wavelet random matrix
$$
{\mathbf W}(a(n)2^j) = P(n){\mathbf W}_X(a(n)2^j)P^*(n) + {\mathbf W}_Z(a(n)2^j)
$$
\begin{equation}\label{e:W=PWXP*+WZ+PWXZ+WXZ*P*}
 + P(n){\mathbf W}_{X,Z}(a(n)2^j)+ {\mathbf W}^*_{X,Z}(a(n)2^j)P^*(n)
\end{equation}
is well defined a.s.

\medskip

\noindent {\sc Assumption $(A2)$}: In \eqref{e:W=PWXP*+WZ+PWXZ+WXZ*P*}, for $H$ as in \eqref{e:B-hat_a(2^j)},
\begin{equation}\label{e:assumptions_WZ=OP(1)}
\max \Big\{\|\E \mathbf{W}_Z(a(n)2^j)\|, \|\W_Z(a(n)2^j)\| , \|a(n)^{-H-(1/2)I} \mathbf{W}_{X,Z}(a(n) 2^j))\|\Big\} = O_{\bbP}(1).
\end{equation}

\medskip

\noindent {\sc Assumption $(A3)$}: The random matrix $\widehat{{\mathbf B}}_a(2^j)$ as in \eqref{e:B-hat_a(2^j)} satisfies
\begin{equation}\label{e:sqrt(K)(B^-B)->N(0,sigma^2)}
\Big(\hspace{1mm}\sqrt{n_{a,j}}\hspace{0.5mm}(\vecoper_{{\mathcal S}}\widehat{{\mathbf B}}_a(2^j) - \vecoper_{{\mathcal S}}{\mathbf B}_a(2^j))\hspace{1mm}\Big)_{j=j_1,\hdots,j_m}\stackrel{d}\rightarrow {\mathcal N}(0,\Sigma_B(j_1,\hdots,j_m)), \quad n \rightarrow \infty.
\end{equation}
for some $\Sigma_B(j_1,\ldots,j_m)\in \mathcal{S}_{\geq 0}\big(m \cdot \frac{r(r+1)}{2},\bbR\big)$. In addition, for some $\beta>0$,
\begin{equation}\label{e:|bfB_a(2^j)-B(2^j)|=O(shrinking)}
\|{{\mathbf B}}_a(2^j) - \mathbf B(2^j) \| = O(a(n)^{-\beta}), \quad  n\to\infty, \quad j=j_1,\hdots,j_m,
\end{equation}
where
\begin{equation}\label{e:B(2^j)_full_rank}
\mathbf B(2^j) \in \mathcal S_{>0}(r,\bbR). %
\end{equation}

\medskip

\noindent {\sc Assumption $(A4)$}: The dimension $p(n)$ and the scaling factor $a(n)$ satisfy the relations
\begin{equation}\label{e:p(n),a(n)_conditions}
a(n) \leq \frac{n}{2^{j_2}}, \quad \frac{a(n)}{n}+ \frac{n}{a(n)^{2 \varpi +1}} \rightarrow 0, \quad \frac{p(n)}{n/a(n)} \rightarrow c \in [0,\infty), \quad n \rightarrow \infty,
\end{equation}
where $\varpi>0$ is defined as
\begin{equation}\label{e:varpi_parameter}
\varpi =
\min\Big\{\min_{\{2\leq i \leq r\!~:\!~h_i-h_{i-1}>0\}} (h_i-h_{i-1}),~ \beta,~\frac{h_1}{2} + \frac{1}{4} \Big\}.
\end{equation}

\medskip

\noindent {\sc Assumption $(A5)$}: Let $P(n) \in M(p,r,\bbR)$ and $P_H \in GL(r,\bbR)$ be as in \eqref{e:P(n)} and \eqref{e:H_is_diagonalizable}, respectively. Let $P(n)P_H=Q(n)R(n)$ be the $QR$ decomposition of $P(n)P_H$, where $R(n)\in GL(r,\bbR)$ and $Q(n) \in M(p,r,\bbR)$ has orthonormal columns. Let $\varpi$ be as in \eqref{e:varpi_parameter}. Then, there exists a (deterministic) matrix $A \in \mathcal{S}_{>0}(r,\bbR)$ with Cholesky decomposition
$A = R^* R$
such that
\begin{equation}\label{e:<p1,p2>=c12_2}
\|R(n)-R\| = O(a(n)^{-\varpi}).
\end{equation}\vspace{1ex}

Assumptions $(A1 - A3)$ are stated in the wavelet domain. Assumption $(A1)$ holds under very general conditions. In fact, since $X$ and $Z$ are assumed independent, it suffices that $\|{\mathbf W}_{X}(a(n)2^j)\| < \infty$ and $\|{\mathbf W}_{Z}(a(n)2^j)\| < \infty$ a.s., which holds as long as $X$ and $Z$ are well-defined discrete time stochastic processes. Assumption $(A2)$ ensures that the  influence of the random matrices ${\mathbf W}_{X,Z}(a(n)2^j)$ and ${\mathbf W}_{Z}(a(n)2^j)$ in the observed wavelet spectrum ${\mathbf W}(a(n)2^j)$ is not too large. Assumption $(A3)$ posits the asymptotic normality of the (wavelet domain) fractional component ${\mathbf W}_{X}(a(n)2^j)$ after compensating for scaling and non-canonical coordinates. %

In turn, Assumption $(A4)$ controls the divergence rates among $n$, $a(n)$ and $p(n)$. In particular, it states that the scaling factor $a(n)$ must blow up slower than $n$, and that the three-component ratio $\frac{p(n) a(n)}{n}$ must converge to a constant (cf.\ the traditional ratio $\lim_{n \rightarrow \infty} p(n)/n$ for high-dimensional sample covariance matrices). Assumption $(A5)$ ensures that, asymptotically speaking, the angles between the column vectors of the matrix $P(n) P_H$ converge in such a way that the matrix of asymptotic angles $\lim_{n \rightarrow \infty}P^*_H P^*(n)P(n)P_H = A$ has full rank. This entails that $P(n)$ does not strongly perturb the scaling properties of the hidden random matrix ${\mathbf W}_{X}(a(n)2^j)$.

In the following examples, to fix ideas we briefly illustrate contexts where assumptions $(A2)$ and $(A3)$ hold.

\begin{example}\label{ex:Prop_4_1_in_Abry_et_al}
Suppose that, for any fixed $n$, $X$ is an $r$-variate ofBm and the $p(n)$-variate process $Z$ are independent. Further assume that, for fixed integers ${\mathfrak p}$ and ${\mathfrak q}$, $Z$ is made up of entry-wise independent, i.d.\ ARMA$({\mathfrak p},{\mathfrak q})$ processes. Then, under mild regularity assumptions it can be shown that condition \eqref{e:assumptions_WZ=OP(1)} (i.e., $(A2)$) is satisfied. It can further be proved that relations \eqref{e:sqrt(K)(B^-B)->N(0,sigma^2)} and \eqref{e:|bfB_a(2^j)-B(2^j)|=O(shrinking)} hold for the matrix sequences $\widehat {\mathbf B}_a(2^j) $, ${\mathbf B}_a(2^j) \in {\mathcal S}_{\geq0}(\bbR,r)$ and a matrix ${\mathbf B}(2^j) \in {\mathcal S}_{>0}(\bbR,r)$ (i.e., $(A3)$) (see Example 2.1 and Proposition 4.1 in Abry et al.\ \cite{abry:boniece:didier:wendt:2022:wavelet_eigenanalysis}; cf.~Lemma C.2 in Abry and Didier \cite{abry:didier:2018:dim2}). Moreover,
when $H$ is diagonalizable with real eigenvalues, it can be easily shown that the matrix $\B(2^j)$ satisfies the entrywise (i.e., along canonical axes) scaling relations
\begin{equation}\label{e:B(2^j)_entrywise_scaling}
{\mathbf B}(2^j) = \Big( {\mathbf b}(2^j)_{ii'} \Big)_{i,i'=1,\hdots,r} = \Big( 2^{j(h_i + h_{i'}+1)}{\mathbf b}(1)_{ii'} \Big)_{i,i'=1,\hdots,r}.
\end{equation}
\end{example}

\begin{example}\label{ex:non-Gaussian_linear}
Let
$$
X = \{X(t)\}_{t \in \bbZ} = \big\{(X_1(t),\hdots,X_r(t))^* \big\}_{t \in \bbZ}
$$
be a possibly non-Gaussian, $\bbR^r$-valued stochastic process whose entry-wise components are independent linear fractional processes with finite fourth moments. Then, based on the framework constructed in Roueff and Taqqu \cite{roueff:taqqu:2009}, one can show that the associated random matrix $\widehat{{\mathbf B}}_a(2^j) \in {\mathcal S}_{>0}(r,\bbR)$ satisfies conditions \eqref{e:sqrt(K)(B^-B)->N(0,sigma^2)} and \eqref{e:|bfB_a(2^j)-B(2^j)|=O(shrinking)} (i.e., $(A3)$) under conditions $(W1-W4)$ and mild additional assumptions on the wavelet $\psi$ and on the process $X$ (see Proposition C.2 in Abry et al.\ \cite{abry:boniece:didier:wendt:2022:wavelet_eigenanalysis}). For this instance of $X$, the model \eqref{e:Y(t)} is associated with the so-named blind source separation problems in the field of signal processing (e.g., Comon and Jutten \cite{comon:jutten:2010}; see also Abry et al.\ \cite{abry:didier:li:2019} on fractional instances).
\end{example}

\begin{example}\label{ex:Haar_sub-Gaussian}
Recall that a distribution is called \textit{sub-Gaussian} when its tails are no heavier than those of the Gaussian distribution (Vershynin \cite{vershynin:2018}, Proposition 2.5.2). Sub-Gaussian distributions form a broad family that includes the Gaussian distribution itself, as well as compactly supported distributions, for example. Suppose the noise process $\{Z(t)\}_{t \in \bbZ}$ consists of i.i.d.\ sub-Gaussian observations. Consider the Haar wavelet framework, where the wavelet coefficients are computed by means of Mallat's iterative procedure \eqref{e:Mallat}. Then, it is possible to show that the wavelet random matrix ${\mathbf W}_{Z}(a(n)2^j)$ satisfies condition \eqref{e:assumptions_WZ=OP(1)} (i.e., $(A2)$; see the discussion in Abry et al.\ \cite{abry:boniece:didier:wendt:2022:wavelet_eigenanalysis}, Example 4.3). On possible extensions beyond the sub-Gaussian case as well as on related results, see Vershynin \cite{vershynin:2012} and  Einmahl and Li \cite{einmahl:li:2008}.
\end{example}

\section{Main results}\label{s:main}

As mentioned in the Introduction, under the assumptions laid out in Section \ref{s:wavelet-assumptions} the $r$ largest eigenvalues of the random matrix $\W(a(n)2^j)$ display \emph{asymptotic} scaling relationships in high dimensions dictated by the eigenvalues of $H$ (see \eqref{e:lambda_p-r+q_scales}). Furthermore, under additional assumptions, the fluctuations of the rescaled wavelet log-eigenvalues around their limits are asymptotically Gaussian. For the reader's convenience, the precise statements are provided in Theorems \ref{t:lim_n_a*lambda/a^(2h+1)} and \ref{t:asympt_normality_lambdap-r+q}. %
These facts indicate that the scaling eigenvalues \eqref{e:H=PHdiag(h1,...,hn)P^(-1)H} can be efficiently estimated in high dimensions through a linear regression based on the log-eigenvalues of $\W(a(n)2^j)$ over a range of scales $2^{j_1},\ldots,2^{j_2}$, provided the scaling factor $a(n)$ is sufficiently large. This motivates the following definition.

\begin{definition}\label{def:eigenvalue_estimator}
Let $\{{\mathbf W}(a(n)2^j)\}_{j=j_1,\hdots,j_2}$ be the wavelet random matrices corresponding to scales $\{a(n)2^{j_1},\hdots, a(n)2^{j_2}\}$. Fix a range of octaves
\begin{equation}\label{e:j1<...<j2_m=j2-j1+1}
j = j_1 , j_1 + 1, \hdots , j_2, \quad m := j_2 - j_1 + 1.
\end{equation}
The \textit{wavelet eigenvalue regression} is given by
\begin{equation}\label{e:hl-hat}
\{ \widehat{\ell}_{i} \}_{i = 1,\hdots,p(n)} :=  \Big\{\frac{1}{2}\Big(\sum_{j=j_1}^{j_2} w_j \log_2 \lambda_{i}\big({\mathbf W}(a(n)2^j)\big) -1\Big)\Big\}_{i = 1,\hdots,p(n)}.
\end{equation}
 In \eqref{e:hl-hat}, $w_j$, $j = j_1,\hdots,j_2$, are weights satisfying the relations
\begin{equation}\label{e:sum_wj=0,sum_jwj=1}
\sum^{j_2}_{j=j_1}w_j = 0, \quad \sum^{j_2}_{j=j_1}j w_j = 1,
\end{equation}
where $w_j = 1$ if $m = 1$ (see \eqref{e:j1<...<j2_m=j2-j1+1}).
\end{definition}
If ${\mathbf W}(a(n)2^j) \in {\mathcal S}_{> 0}(r,\bbR)$, then the estimator \eqref{e:hl-hat} is well defined. Assuming for the moment that the dimension $r$ of the hidden fractional process $X$ is known, expression \eqref{e:hl-hat} based on the top $r$ wavelet eigenvalues can be interpreted as wavelet eigenstructure estimators of the scaling eigenvalues, i.e., we can write
\begin{equation}\label{e:hq=ell_p-r+q}
\{ \widehat{h}_q \}_{q = 1,\hdots,r}=\{ \widehat{\ell}_{p(n)-r+q} \}_{q = 1,\hdots,r}.
\end{equation}
Hence, the estimator \eqref{e:hq=ell_p-r+q} can be viewed as a high-dimensional extension of the wavelet eigenvalue regression estimator first developed in Abry and Didier \cite{abry:didier:2018:dim2,abry:didier:2018:n-variate} in a noiseless, low-dimensional context for ofBm.

In the following theorems, we characterize the asymptotic and large scale behavior of the wavelet eigenvalue regression estimator \eqref{e:hq=ell_p-r+q} in high dimensions, namely, its consistency and, under assumptions, asymptotic normality. Consistency is, indeed, a direct consequence of the convergence in probability of the $r$ largest rescaled wavelet eigenvalues (cf.\ Theorem \ref{t:lim_n_a*lambda/a^(2h+1)}). By contrast, establishing the asymptotic normality of the wavelet eigenvalue regression estimator goes beyond the asymptotic distribution of wavelet eigenvalues over large scales in high dimensions (cf.\ Theorem \ref{t:asympt_normality_lambdap-r+q}). In fact, it also requires establishing the rate of convergence of the rescaled $r$ largest eigenvalues of the deterministic wavelet matrices $\bbE \W (a(n)2^j)$. In turn, this involves studying the rate of angular convergence of the associated (deterministic) wavelet eigenvectors in terms of the high-dimensional coordinates $P = P(n)$.
\begin{theorem}\label{t:h-hat_consistency}
Assume $(W1-W4)$ and $(A1-A5)$ hold and fix $j_1,j_2 \in \bbN$, $j_1<j_2$. Then,
\begin{equation}
\Big(\widehat{h}_q - h_q \Big)_{q=1,\hdots,r} \stackrel{\bbP}\rightarrow 0, \quad n \rightarrow \infty.
\end{equation}
\end{theorem}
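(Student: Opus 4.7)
The plan is to reduce the consistency statement to the in-probability scaling relation \eqref{e:lambda_p-r+q_scales} provided by Theorem \ref{t:lim_n_a*lambda/a^(2h+1)}, followed by a direct regression computation exploiting the weight conditions \eqref{e:sum_wj=0,sum_jwj=1}.

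First, I would verify that the deterministic prefactor $\xi_q(2^j)$ has the explicit form $\xi_q(2^j) = C_q \cdot 2^{j(2h_q+1)}$ for a positive constant $C_q$, for each $q = 1,\ldots,r$. This can be read off from the combined-scale interpretation of the wavelet scale $a(n) 2^j$: reapplying \eqref{e:lambda_p-r+q_scales} with reference scale $a(n) 2^{j-j_0}$ at octave $j_0$ forces the multiplicative consistency $\xi_q(2^j) = \xi_q(2^{j_0}) \cdot 2^{(j-j_0)(2h_q+1)}$. The same form can be derived from the canonical-axis scaling \eqref{e:B(2^j)_entrywise_scaling} of $\B(2^j)$ combined with Assumption $(A5)$, which ensures that the top $r$ eigenvalues of $P(n)\B(2^j)P^*(n)$ asymptotically align with those of a diagonal matrix in the rotated frame induced by the QR decomposition of $P(n)P_H$.

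Second, since the top $r$ eigenvalues of $\W(a(n)2^j)$ are positive with probability tending to one (under \eqref{e:B(2^j)_full_rank} together with the scaling relation itself), the continuous mapping theorem applied to $\log_2$ yields
$$\log_2 \lambda_{p(n)-r+q}\big(\W(a(n)2^j)\big) = \log_2 C_q + j(2h_q+1) + (2h_q+1)\log_2 a(n) + o_\bbP(1)$$
for each $j \in \{j_1,\ldots,j_2\}$. Substituting into \eqref{e:hl-hat}--\eqref{e:hq=ell_p-r+q} and using $\sum_j w_j = 0$ annihilates both $\log_2 C_q$ and $(2h_q+1)\log_2 a(n)$, while $\sum_j j w_j = 1$ converts the linear-in-$j$ term into $2h_q + 1$. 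Together with the $-1$ and the factor $1/2$ in the definition of $\widehat{\ell}_i$, this gives $\widehat{h}_q - h_q = o_\bbP(1)$, and taking a union bound over $q = 1,\ldots,r$ (a fixed finite collection) yields joint convergence in probability, as required.

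The main obstacle I anticipate is the first step, namely, pinning down the explicit $j$-dependence $\xi_q(2^j) = C_q \cdot 2^{j(2h_q+1)}$; once this is in place, the rest is a routine log-linear regression calculation. Everything ultimately rests on the scaling relation \eqref{e:lambda_p-r+q_scales}, which is the substantive high-dimensional input inherited from Abry et al.\ \cite{abry:boniece:didier:wendt:2022:wavelet_eigenanalysis}. It is worth noting that, in contrast with the asymptotic normality statement of Theorem \ref{t:h-hat_asympt_normality}, no rate of convergence is required here; in particular, the bias term $\|{\mathbf B}_a(2^j) - \mathbf B(2^j)\| = O(a(n)^{-\beta})$ from \eqref{e:|bfB_a(2^j)-B(2^j)|=O(shrinking)} need only be $o(1)$, which is why Assumption $(A3)$ suffices without further quantitative control beyond what is already used in Theorem \ref{t:lim_n_a*lambda/a^(2h+1)}.
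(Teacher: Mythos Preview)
Your proposal is correct and follows essentially the same route as the paper's proof. The only point worth noting is that the scaling relation $\xi_q(2^j) = 2^{j(2h_q+1)}\xi_q(1)$ that you flag as the main obstacle is in fact already part of Theorem~\ref{t:lim_n_a*lambda/a^(2h+1)} (relation~\eqref{e:xi-i0_scales}), so no separate derivation is needed; the paper simply invokes \eqref{e:lim_n_a*lambda/a^(2h+1)} and \eqref{e:xi-i0_scales} together with the weight conditions \eqref{e:sum_wj=0,sum_jwj=1} and computes directly.
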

\begin{theorem}\label{t:h-hat_asympt_normality} Suppose $(W1-W4)$ and $(A1-A5)$ hold, and fix $j_1,j_2 \in \bbN$, $j_1<j_2$. Further suppose one of the following conditions holds, namely,
\begin{itemize}
\item[(i)] either
\begin{equation}\label{e:assumption:simple_Hurst}
-1/2 < h_1 < \ldots < h_r < \infty;
\end{equation}
\item[(ii)] or $h_1 = \ldots = h_r$ and the functions $\xi_q(2^j)$ in \eqref{e:lim_n_a*lambda/a^(2h+1)} satisfy
\begin{equation}\label{e:xiq_distinct}
q_1\neq q_2 \Rightarrow \xi_{q_1}(1)\neq \xi_{q_2}(1).
\end{equation}
\end{itemize}
Then,
\begin{equation}\label{e:h^q_asymptotically_normal}
\sqrt{\frac{n}{a(n)}} \Big( \widehat{h}_q - h_q \Big)_{q=1,\hdots,r} \stackrel{d}\rightarrow {\mathcal N}(0,M \Sigma_{\lambda} M^*),
\end{equation}
as $n \rightarrow \infty$, for some weight matrix $M$ (see \eqref{e:weight_matrix_M}) and $\Sigma_{\lambda}$ as in Theorem \ref{t:asympt_normality_lambdap-r+q}.
\end{theorem}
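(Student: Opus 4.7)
My plan has three steps: a linear algebraic decomposition of $\widehat h_q - h_q$, a delta-method argument built on the eigenvalue asymptotic normality theorem, and control of a deterministic bias term that arises from replacing $\lambda_q(\bbE \W(a(n)2^j))$ with $\xi_q(2^j)\,a(n)^{2h_q+1}$.

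\textbf{Step 1: Reduction.} I would first rewrite the estimator error in a form amenable to asymptotic analysis. Invoking $\sum_{j=j_1}^{j_2} w_j = 0$ and $\sum_{j=j_1}^{j_2} j\, w_j = 1$, together with the canonical entrywise scaling $\xi_q(2^j) = 2^{j(2h_q+1)}\xi_q(1)$ (inherited from the self-similarity of $X$, cf.\ \eqref{e:B(2^j)_entrywise_scaling}), a direct calculation gives
\begin{equation*}
\widehat{h}_q - h_q = \frac{1}{2} \sum_{j=j_1}^{j_2} w_j\, \log_2 \frac{\lambda_{p(n)-r+q}\big(\W(a(n)2^j)\big)}{\xi_q(2^j)\, a(n)^{2h_q+1}}.
\end{equation*}
This reduces the target limit to the joint asymptotic analysis of the rescaled eigenvalues centered at $\xi_q(2^j)$.

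\textbf{Step 2: Delta method.} Under either condition (i) or (ii), Theorem \ref{t:asympt_normality_lambdap-r+q} delivers the joint asymptotic normality of the vector $\big(\sqrt{n_{a,j}}[\lambda_{p(n)-r+q}(\W(a(n)2^j))/a(n)^{2h_q+1} - \xi_q(2^j)]\big)_{q,j}$ with limiting covariance $\Sigma_\lambda$. Since each $\xi_q(2^j) > 0$, I would apply the delta method coordinatewise to the map $x \mapsto \log_2 x$, whose derivative at $\xi_q(2^j)$ equals $1/(\xi_q(2^j)\ln 2)$. Combining with the identity $\sqrt{n/a(n)} = 2^{j/2}\sqrt{n_{a,j}}$ and the linear combination from Step 1, this yields $\sqrt{n/a(n)}(\widehat{h}_q - h_q)_{q} \stackrel{d}\rightarrow \mathcal{N}(0, M\Sigma_\lambda M^*)$, where $M$ collects the scalars $w_j/2$, $2^{j/2}$, and $1/(\xi_q(2^j)\ln 2)$ into a single weight matrix, in accordance with the form of \eqref{e:weight_matrix_M}. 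The role of assumption (i)/(ii) is to guarantee that the top $r$ eigenvalues are asymptotically well separated -- either through distinct Hurst exponents or through distinct $\xi_q(1)$ -- so that they are unambiguously indexed and standard eigenvalue perturbation arguments are available.

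\textbf{Step 3: Main obstacle -- deterministic bias.} The delicate point (and the principal mathematical contribution emphasized in the introduction) is that the delta method requires centering the rescaled eigenvalues precisely at $\xi_q(2^j)$, whereas Theorem \ref{t:asympt_normality_lambdap-r+q} centers them at the eigenvalues of $\bbE \W(a(n)2^j)$. One must therefore show that
\begin{equation*}
\lambda_{p(n)-r+q}\big(\bbE\W(a(n)2^j)\big)/a(n)^{2h_q+1} - \xi_q(2^j) = o\big(\sqrt{a(n)/n}\big).
\end{equation*}
This is exactly the role of Proposition \ref{p:|lambdaq(EW)-xiq(2^j)|_bound_2}, which supplies a deterministic bound of order $O(a(n)^{-\varpi})$ via an analysis of the angular convergence of the eigenvectors of $\bbE \W_X$ as functions of the high-dimensional coordinates $P(n)P_H$ (captured by Assumption $(A5)$ and the $QR$-type decomposition therein) and of the spectral gaps in $H$ (encoded in the definition \eqref{e:varpi_parameter} of $\varpi$). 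The third divergence-rate condition in \eqref{e:p(n),a(n)_conditions}, namely $n/a(n)^{2\varpi+1} \to 0$, is exactly $\sqrt{n/a(n)}\cdot a(n)^{-\varpi} \to 0$, so this deterministic bias is negligible at the target rate $\sqrt{n/a(n)}$. I expect this angular/eigenvector analysis to be the most technically demanding piece, since it must be carried out uniformly as $p(n)\to\infty$ and respect the coupling between $P(n)$ and the Jordan structure of $H$; by contrast, the stochastic piece of the argument (Steps 1--2) is essentially a structured application of Slutsky and the multivariate delta method.
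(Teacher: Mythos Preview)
Your three-step architecture matches the paper's proof almost exactly: the paper also splits $\sqrt{n/a}(\widehat h_q - h_q)$ into a stochastic piece centered at $\log\lambda_{p-r+q}(\bbE\W)$, a deterministic bias piece controlled by Proposition~\ref{p:|lambdaq(EW)-xiq(2^j)|_bound_2} together with $n/a^{2\varpi+1}\to 0$, and a third piece that vanishes identically by the weight relations and the scaling $\xi_q(2^j)=2^{j(2h_q+1)}\xi_q(1)$. Your Step~3 discussion is on target.

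There is, however, a misreading in Step~2. Theorem~\ref{t:asympt_normality_lambdap-r+q} does \emph{not} give asymptotic normality of the rescaled eigenvalues $\lambda_{p-r+q}(\W)/a^{2h_q+1}$ centered at $\xi_q(2^j)$; it gives asymptotic normality of the \emph{log}-eigenvalues, centered at $\log\lambda_{p-r+q}(\bbE\W)$, with covariance $\Sigma_\lambda$. Consequently no delta method is needed: one simply converts $\log_2$ to $\log$ (factor $1/\log 2$), substitutes $\sqrt{n/a}=2^{j/2}\sqrt{n_{a,j}}$, and applies the linear map from Step~1 directly. Your proposed factor $1/(\xi_q(2^j)\ln 2)$ in $M$ is therefore incorrect; the paper's $M$ in \eqref{e:weight_matrix_M} depends only on $j$ (through $2^{j/2}w_j/\log 2$) and not on $q$. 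With that correction your argument coincides with the paper's.
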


\begin{remark}\label{r:asympt_rescaled_eigenvalues}
Under non-simple scaling matrix eigenvalues and without condition \eqref{e:xiq_distinct}, the asymptotic distribution of $\{\widehat{h}_q\}_{q=1,\hdots,r}$ is generally expected to be non-Gaussian because, in this case, wavelet log-eigenvalues themselves are generally expected to be non-Gaussian (see the discussion in Abry et al.~(2022), Remark 3.1). %
The broad characterization of the distribution of wavelet log-eigenvalues outside the framework of the assumptions of Theorem \ref{t:h-hat_asympt_normality} is currently a topic of research.
\end{remark}
In practice, the dimension $r$ of the latent process $X$ may not be known.  In the following definition, we introduce an estimator of $r$ based on the eigenvalues of  $\W(a(n)2^j)$.
\begin{definition}\label{def:r-hat}
Let $j_1< j_2$ and consider any set of weights $v_{j_1},\ldots v_{j_2}\in \bbR$ satisfying $\sum_{j=j_1}^{j_2}v_j=1$. Let
\begin{equation}\label{e:def_deltaq}
\Delta_i(j_1,j_2):= \sum_{j=j_1}^{j_2}v_j\frac{\log\lambda_i\big(\mathbf W(a(n)2^j)\big)}{\log(a(n)2^j)}
\end{equation}
Given $\kappa>0$, we define
\begin{equation}\label{e:def_rhat}
{\widehat{r}}(2^{j_1},2^{j_2},\kappa) := \#\{i:\Delta_i(j_1,j_2)>\kappa\}.
\end{equation}
\end{definition}
Now note that, under the assumptions of Theorem \ref{t:h-hat_consistency}, the lowest $p-r$ eigenvalues stay bounded. Hence, for $i\leq p-r$, the quantity $\Delta_i(j_1,j_2)$ tends to zero in probability. On the other hand, for $i>p-r$, still under the assumptions of Theorem \ref{t:h-hat_consistency}, $\Delta_i(j_1,j_2)$ converges to $2 h_{i-(p-r)}+1>0$ in probability, thereby separating the non-scaling and scaling eigenvalues of ${\mathbf W}(a(n)2^j)$. This phenomenon lies behind the following theorem, which establishes the consistency of the estimator $\widehat{r}(2^{j_1},2^{j_2},\kappa)$. %
\begin{theorem}\label{t:r-hat->r}
Let $\widehat{r}(2^{j_1},2^{j_2},\kappa)$ be as in \eqref{e:def_rhat} with $\kappa \in (0,2h_1+1]$ and suppose the assumptions of Theorem \ref{t:h-hat_consistency} hold. Then,
\begin{equation}\label{e:r-hat-->r}
\lim_{n\to\infty}  \bbP\big(\widehat r(2^{j_1},2^{j_2},\kappa) = r\big) =1%
\end{equation}
\end{theorem}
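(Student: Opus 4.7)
The plan is to exploit the dichotomous asymptotic behavior of $\Delta_i(j_1,j_2)$ already anticipated by the authors in the discussion preceding the theorem: for the top $r$ indices ($i>p-r$) the rescaled log-eigenvalue $\log\lambda_i(\W(a(n)2^j))/\log(a(n)2^j)$ converges in probability to $2h_{i-(p-r)}+1>0$, whereas for the lower $p-r$ indices the same ratio vanishes in probability. Together with $\sum_j v_j = 1$ and the hypothesis $\kappa\in(0,2h_1+1]$, these two limits separate the indices into those above and below the threshold $\kappa$, and a union bound over the finitely many indices and octaves yields $\bbP(\widehat{r}=r)\to 1$.

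For the top-eigenvalue regime, fix $q\in\{1,\ldots,r\}$ and let $i=p-r+q$. By Theorem \ref{t:lim_n_a*lambda/a^(2h+1)} (cf.\ \eqref{e:lambda_p-r+q_scales}) one has $\lambda_{p-r+q}(\W(a(n)2^j))/a(n)^{2h_q+1}\stackrel{\bbP}\to\xi_q(2^j)>0$. Since $a(n)\to\infty$ under $(A4)$ (the rate condition $n/a(n)^{2\varpi+1}\to 0$ combined with $n\to\infty$ forces divergence), one can apply the continuous mapping theorem to $\log(\cdot)$ and then divide by $\log(a(n)2^j)=\log a(n)+j\log 2\to\infty$, using $\log a(n)/\log(a(n)2^j)\to 1$, to obtain
\begin{equation*}
\frac{\log\lambda_{p-r+q}(\W(a(n)2^j))}{\log(a(n)2^j)} \stackrel{\bbP}\longrightarrow 2h_q+1.
\end{equation*}
Linearity and the constraint $\sum_j v_j=1$ then give $\Delta_{p-r+q}(j_1,j_2)\stackrel{\bbP}\to 2h_q+1\geq 2h_1+1$, so $\bbP(\Delta_{p-r+q}>\kappa)\to 1$ whenever $\kappa<2h_1+1$.

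For the lower-eigenvalue regime, fix $i\leq p-r$ and use the boundedness-in-probability claim from Theorem \ref{t:lim_n_a*lambda/a^(2h+1)}. Supplemented by a corresponding lower bound (boundedness away from $0$), this gives $\log\lambda_i(\W(a(n)2^j))=O_\bbP(1)$, and dividing by $\log(a(n)2^j)\to\infty$ yields $\Delta_i(j_1,j_2)=o_\bbP(1)$. Hence $\bbP(\Delta_i>\kappa)\to 0$ since $\kappa>0$, and combining with the top-regime estimate through a union bound over $i\in\{1,\ldots,p(n)\}$ delivers \eqref{e:r-hat-->r}.

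The main obstacle will be the lower-eigenvalue step: while Theorem \ref{t:lim_n_a*lambda/a^(2h+1)} furnishes $\lambda_i=O_\bbP(1)$ from above, controlling $\log\lambda_i$ from below (i.e., preventing the bottom eigenvalues from collapsing to $0$ in probability) requires either a dedicated argument under $(A1{-}A5)$ or the restriction to nonnegative weights $v_j$, under which the upper bound alone suffices because smaller $\lambda_i$ only drives $\Delta_i$ farther below $\kappa$. A secondary subtlety concerns the boundary case $\kappa=2h_1+1$ permitted by the statement: convergence in probability of $\Delta_{p-r+1}$ to $2h_1+1$ does not itself imply $\bbP(\Delta_{p-r+1}>2h_1+1)\to 1$, so this endpoint would require either invoking the Gaussian fluctuation result of Theorem \ref{t:asympt_normality_lambdap-r+q} or a finer analysis of the deterministic subleading corrections in the log-eigenvalue expansion.
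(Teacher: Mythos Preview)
Your approach is essentially identical to the paper's: for the top $r$ eigenvalues, invoke Theorem \ref{t:lim_n_a*lambda/a^(2h+1)} to get $\log\lambda_{p-r+q}(\W(a(n)2^j))/\log(a(n)2^j)\stackrel{\bbP}\to 2h_q+1$, then use $\sum_j v_j=1$ to obtain $\Delta_{p-r+q}\stackrel{\bbP}\to 2h_q+1$; for the lower indices, invoke \eqref{e:lim_lambda_p-r(W)} to argue $\Delta_i\stackrel{\bbP}\to 0$, and conclude by combining. The paper's proof is precisely this, written out in a few lines, with the lower-eigenvalue step handled via the single inequality $\max_{i\leq p-r}|\Delta_i|\leq \sum_j |v_j|\,|\log_2\lambda_{p-r}(\W(a(n)2^j))/\log_2(a(n)2^j)|$.

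The two concerns you flag are legitimate and, notably, are \emph{not} resolved in the paper's proof either. First, the paper asserts $\log_2\lambda_{p-r}(\W)/\log_2(a2^j)\stackrel{\bbP}\to 0$ citing only the $O_\bbP(1)$ upper bound \eqref{e:lim_lambda_p-r(W)}; this yields only an upper bound on the ratio, and the displayed inequality bounding $\max_i|\Delta_i|$ by the $(p{-}r)$-th term is not valid in general (since $|\log\lambda_i|$ can exceed $|\log\lambda_{p-r}|$ when $\lambda_i$ is near zero). Your proposed fix---restricting to $v_j\geq 0$, under which the upper bound alone controls $\Delta_i$ from above---is the cleanest way to close this, and is in fact what the argument implicitly needs. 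Second, the boundary case $\kappa=2h_1+1$: the paper writes $2h_q+1>\kappa$ without comment, which fails for $q=1$ at the endpoint; your observation that convergence in probability to the threshold does not by itself yield $\bbP(\Delta_{p-r+1}>\kappa)\to 1$ is correct, and the paper offers no additional argument here. So your proposal matches the paper's strategy exactly, and your caveats identify genuine soft spots in that strategy rather than shortcomings of your own reconstruction.
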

\begin{remark} In the context of the (univariate) wavelet regression, it is common practice to select the weights \eqref{e:sum_wj=0,sum_jwj=1} to include information about the wavelet variance over the regression scales $2^{j_1},\ldots 2^{j_2}$ (cf.\ Veitch and Abry \cite{veitch:abry:1999}). This can lead to improved estimation performance. For convenience, in all simulations we set $v_j=j w_j$, where $w_j$ are the regression weights in \eqref{e:hl-hat}.
\end{remark}

\section{Monte Carlo studies}\label{s:MC}

In this section, we describe our Monte Carlo-based studies of the asymptotic behavior of the wavelet eigenvalue regression estimator $\{\widehat{\ell}_i\}_{i=1,\hdots,p(n)}$ (see \eqref{e:hl-hat}) and of the estimator $\widehat{r}(j_1,j_2,\kappa)$ (see \eqref{e:def_rhat}) in high dimensions.

Figure \ref{fig:ellhat} displays the asymptotic behavior of $\{\widehat{\ell}_i\}_{i=1,\hdots,p(n)}$ over scales $[j-1,j+1]$ in the specified ranges of $j$. As the effective sample size $n_j=n/2^j$ and dimension $p = p(n)$ increase together at the specified ratio $c := p/n_j$,  the estimators display convergence to their theoretical values or boundedness.  As expected, at more favorable (namely, smaller) values of $c$, the eigenvalues display near-convergence to theoretical values at smaller sample sizes. Computational studies not shown reveal that the asymptotic performance of $\widehat \ell_i$ over multiple scales in Figure \ref{fig:ellhat} displays reduced bias by comparison to using a single or a reduced number of scales.

Recall that the so-named Gamma plots are defined as plots of the empirical quantiles of squared Mahalanobis distance vs.\ the theoretical quantiles of a $\chi^2_r$ distribution (Johnson and Wichern \cite{johnson:wichern:2002}, Oppong and Agbedra \cite{oppong:agbedra:2016}). Figure \ref{fig:asympt_h-hat_qqplot} displays Gamma plots of the sample distribution of the wavelet eigenvalue regression estimator $\{\widehat{h}_q\}_{q=1,\hdots,6}= \{\widehat{\ell}_{p(n)-r + q}\}_{q=1,\hdots,6}$ ($r = 6$) for various combinations of $p$ and $n_{j_2} = n/2^{j_2}$. In all instances, the hidden process $X$ was simulated by means of CME as an ofBm with scaling (Hurst) parameter $H = \diag\big(0.1,0.3,0.5,0.6,0.8,0.9\big)$ and (instantaneous) covariance matrix $\bbE B_H(1)B_H(1) = \mbox{Toeplitz}(1, 0.2, 0.2, 0.3, 0.2, 0.3)$. At each time $t$, the vector noise term $Z(t)$ consisted of $p$ i.i.d.\ ${\mathcal N}(0,1)$ entries. At each run, the columns of the coordinates matrix $P(n)$ were set to the first $r = 6$ canonical vectors. As $n$ (and, hence, $p$) grows, in the high-dimensional limit the distribution of $\{\widehat{h}_q\}_{q=1,\hdots,6}$ becomes increasingly closer to a multivariate Gaussian, as mathematically characterized in Theorem \ref{t:h-hat_asympt_normality}.

Figure \ref{fig:r-hat} depicts the performance $\widehat{r}(j_1,j_2,\kappa)$ as a function of the threshold $\kappa > 0$ for various combinations of $p/n_{j_2}$, $n_{j_2}$ and $r$. To illustrate the asymptotic behavior of $\widehat{r}(j_1,j_2,\kappa)$ while keeping the ratio $c := p/n_{j_2}$ constant, the parameters $j_1,j_2$ were chosen as follows: for $n=2^{10}$, $(j_1,j_2)=(3,5)$ (hence, $n_{j_2}=32$); for $n=2^{12}$, $(j_1,j_2)=(4,6)$ (hence, $n_{j_2}=64$); for $n=2^{14}$, $(j_1,j_2)=(5,7)$ (hence, $n_{j_2}=128$). The hidden process $X$ was simulated by means of CME as an ofBm with scaling (Hurst) parameter $H = \diag\big(h_1,h_2,...,h_r\big)$, where $h_q = q/(r+1)$, $q = 1, \hdots,r$. At each time $t$, the vector noise term $Z(t)$ consisted of $p$ i.i.d.\ ${\mathcal N}(0,1)$ entries. At each run, the entries of the coordinates matrix $P(n)$ were generated as i.i.d.\ standard normal entries, and then the norm of each column was renormalized to 1 so as to keep constant the signal-to-noise ratio. For each wavelet random matrix, eigenvalues that fell below $10^{-10}$ in absolute value (stemming from deficient rank when $p/n_j < 1$) were set to 0 manually to avoid spuriously high values of $\Delta_i(j_1,j_2)$.

The computational studies reveal that, in all instances, too small or too large a value of $\kappa \in (0,1)$ leads to over-- and underestimation, respectively, of $X$ components. For any $r \in \{3,5,8\}$, and for each value of $c$, larger $n_{j_2}$ (and, hence, larger dimension $p$) produces a larger range of $\kappa$ where $\widehat{r}(j_1,j_2,\kappa)$ is perfectly accurate or nearly so, thus reflecting the high-dimensional asymptotics in Theorem \ref{t:r-hat->r}. Also, for each pair $n_{j_2}$ and $p$, larger values of $c$ lead to narrower ranges of $\kappa$ for which $\widehat r(j_1,j_2,\kappa)$ concentrates around the true value of $r$. This is expected since larger values of $c$ amount to more extreme stochastic regimes.

\section{Conclusion and open problems}\label{s:conclusion}

In this paper, we construct the wavelet eigenvalue regression methodology (Abry and Didier \cite{abry:didier:2018:dim2,abry:didier:2018:n-variate}) in high dimensions. We assume that possibly non-Gaussian, finite-variance $p$-variate measurements are made of a low-dimensional $r$-variate ($r \ll p$) fractional stochastic process with non-canonical scaling coordinates and in the presence of additive high-dimensional noise. The measurements are correlated both time-wise and between rows. Due to the asymptotic and large scale properties of large wavelet random matrices, the wavelet eigenvalue regression is shown to be consistent and (under additional assumptions) asymptotically Gaussian in the estimation of the fractal structure of the underlying measurements. We further construct a consistent estimator of the effective dimension $r$ of the system that significantly increases the robustness of the statistical methodology. The estimation performance over finite samples is further studied by means of simulations.

This research leads to many new research directions, some of which can be briefly described as follows. $(i)$ In applications, the results in this paper naturally pave the way for the investigation of scaling behavior in high-dimensional data from fields such as physics, neuroscience and signal processing; $(ii)$ Modeling requires a deeper study, in the wavelet domain, of the so-named \textit{eigenvalue repulsion effect} (e.g., Tao \cite{tao:2012}), which may severely skew the observed scaling laws when the assumptions of Theorem \ref{t:h-hat_asympt_normality} are violated. This is particularly important in the context of instances where all scaling eigenvalues are close to equal, with the same holding for the asymptotic rescaled eigenvalues $\xi_q(2^j)$ (see Wendt et al.\ \cite{wendt:abry:didier:2019:bootstrap} on preliminary computational studies). In those cases, it is of great interest to develop efficient testing procedures for the statistical identification of \textit{distinct }scaling eigenvalues in real data; $(iii)$ An interesting direction of extension is the construction of statistical methodology for instances where the $r$ largest eigenvalues of wavelet random matrices exhibit non-Gaussian fluctuations (cf.\ Remark \ref{r:asympt_rescaled_eigenvalues}), or other related instances where the hidden process $X$ displays heavier tails.

\begin{figure}
\begin{center}
\noindent\begin{tabular}{ccc}
$p/n_{j_2}=1/2$ &$p/n_{j_2}=1$ &$p/n_{j_2}=2$ \\
\\
\includegraphics[width=0.2\linewidth]{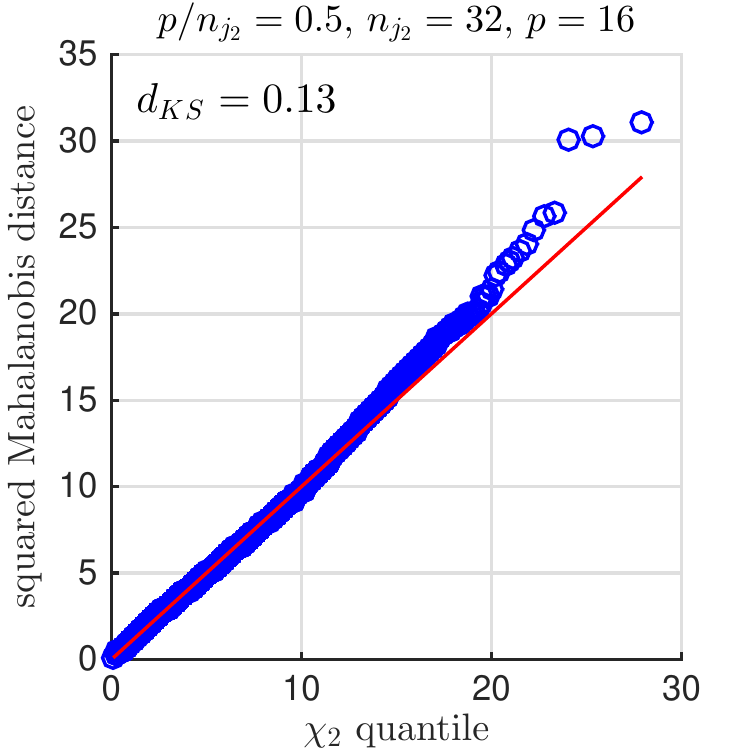}&\includegraphics[width=0.2\linewidth]{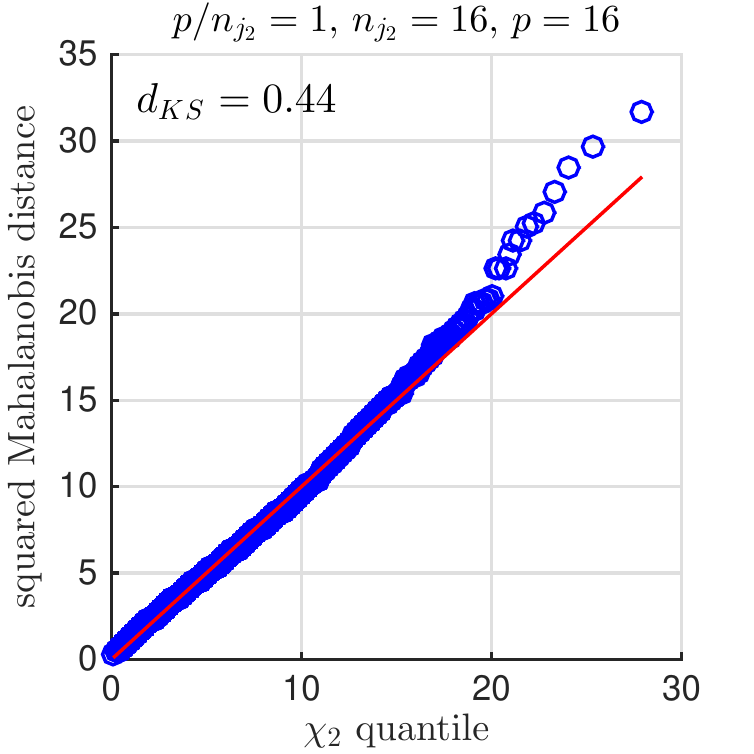}&\includegraphics[width=0.2\linewidth]{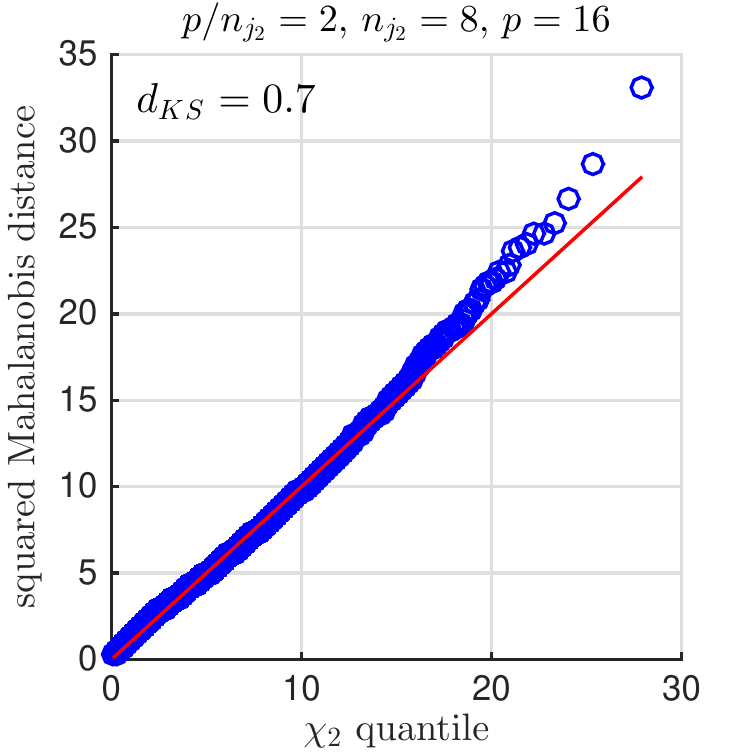}\\
\includegraphics[width=0.2\linewidth]{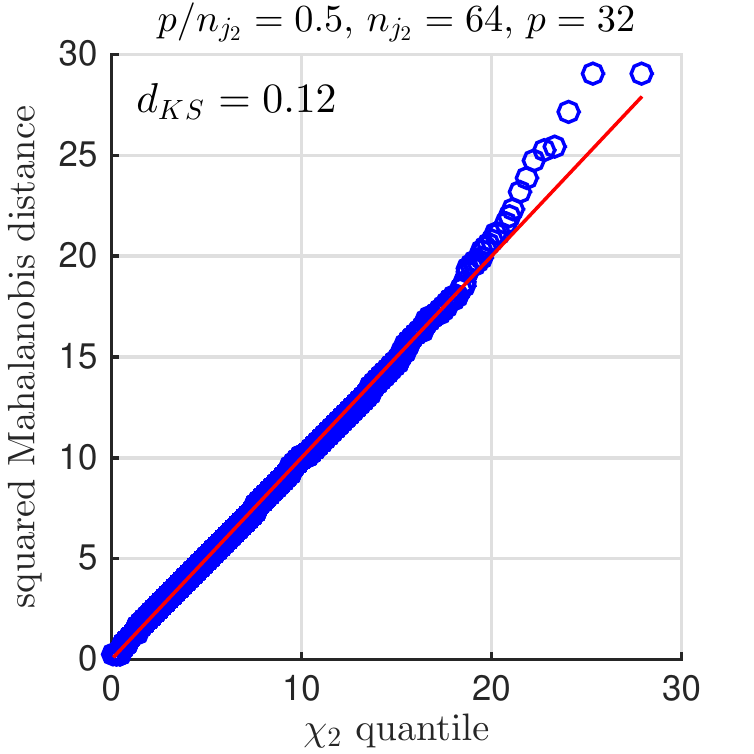}&\includegraphics[width=0.2\linewidth]{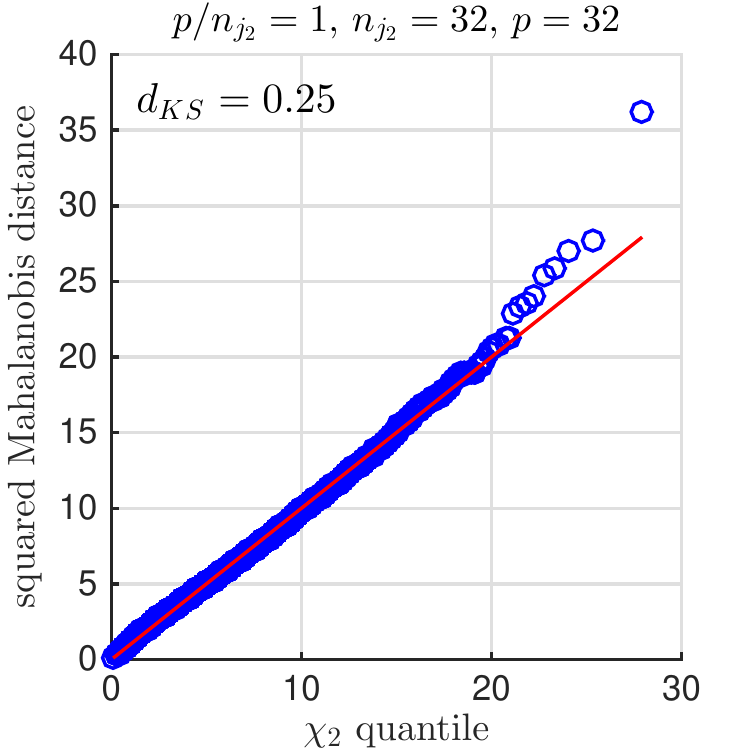}&\includegraphics[width=0.2\linewidth]{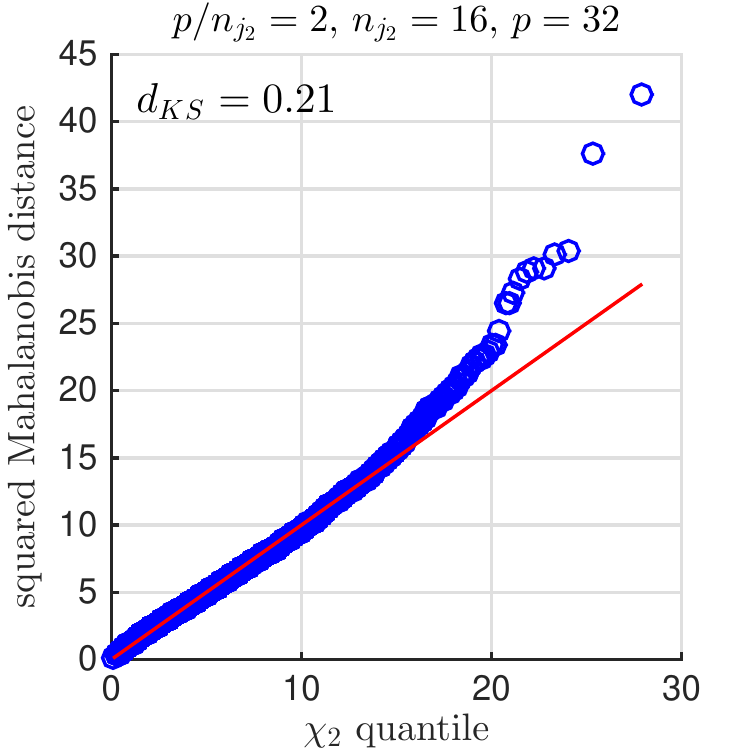}\\
\includegraphics[width=0.2\linewidth]{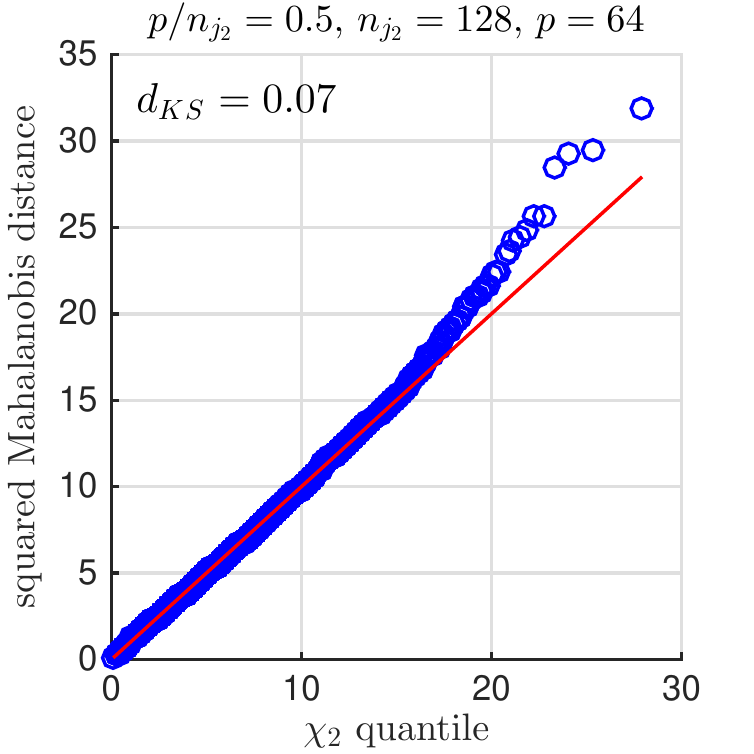}&\includegraphics[width=0.2\linewidth]{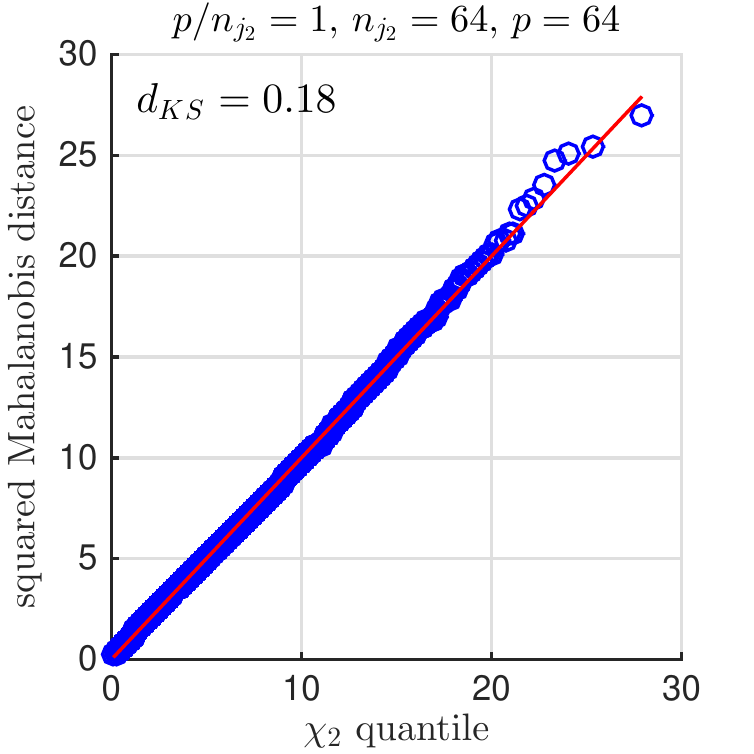}&\includegraphics[width=0.2\linewidth]{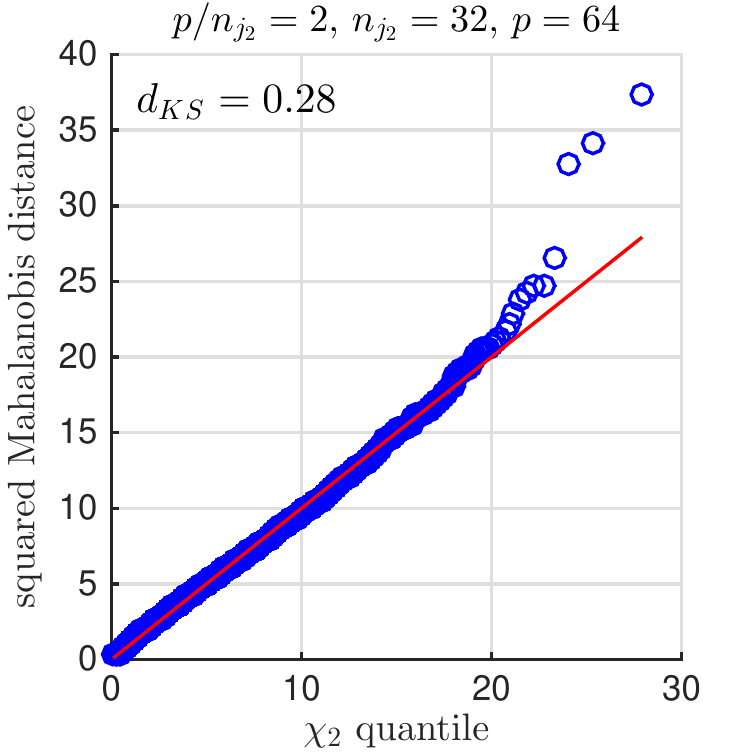}\\
\includegraphics[width=0.2\linewidth]{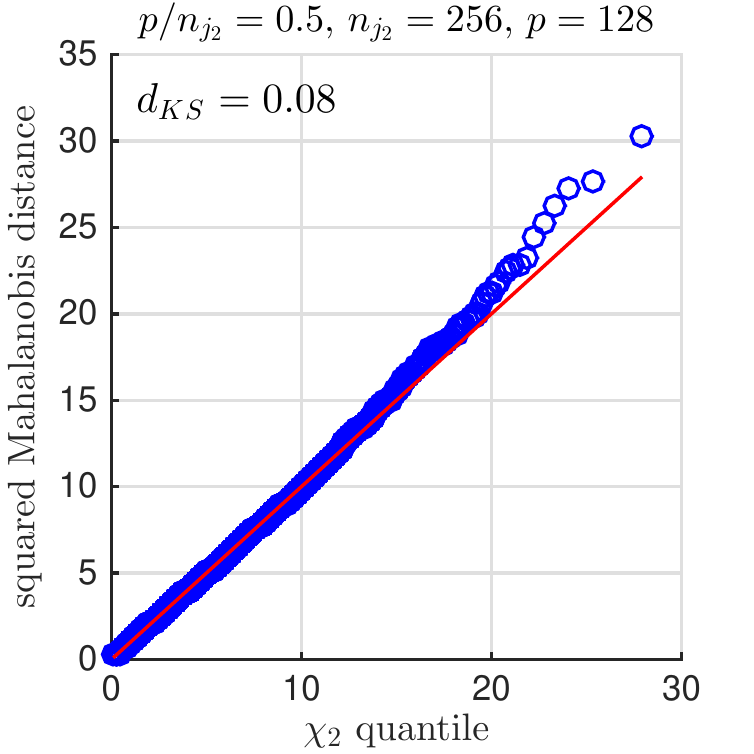}&\includegraphics[width=0.2\linewidth]{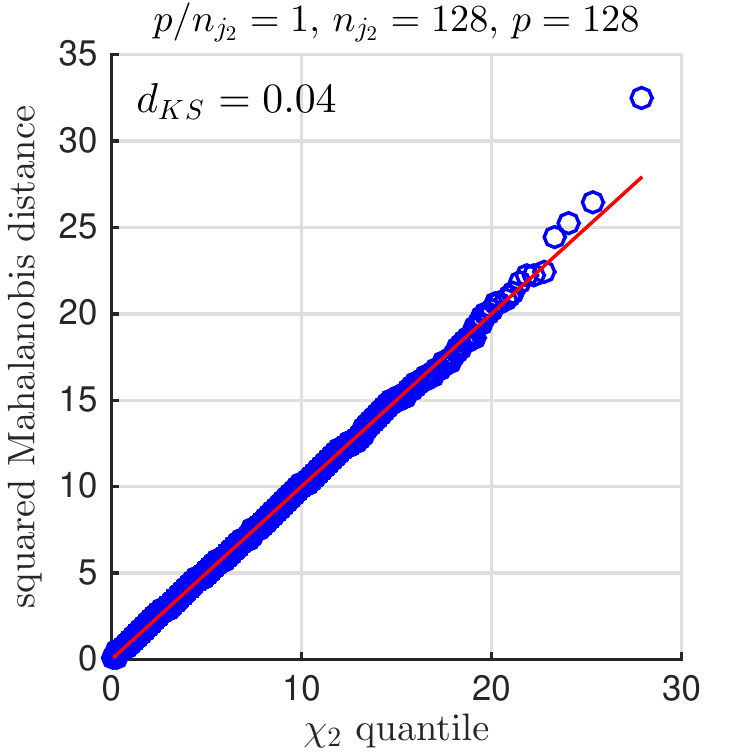}&\includegraphics[width=0.2\linewidth]{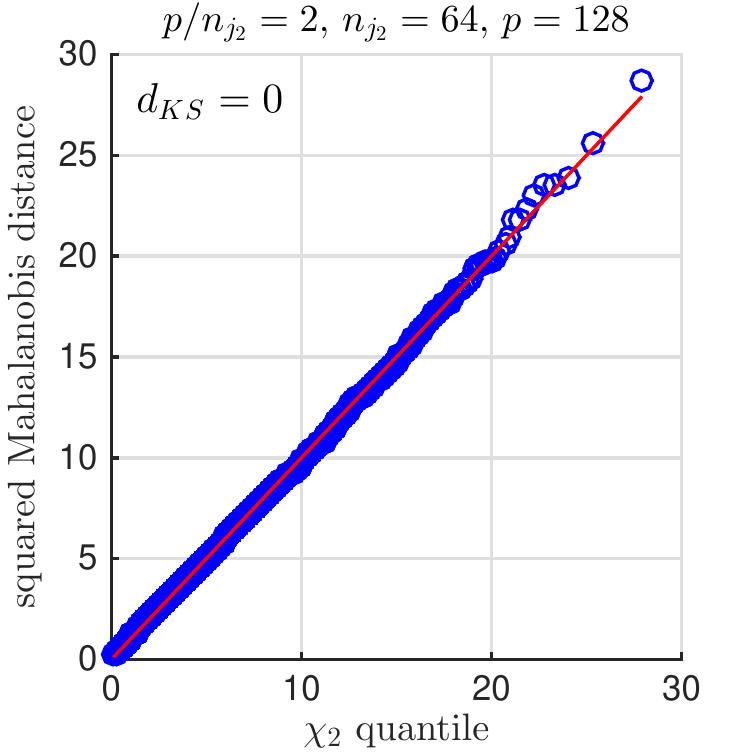}\\
\includegraphics[width=0.2\linewidth]{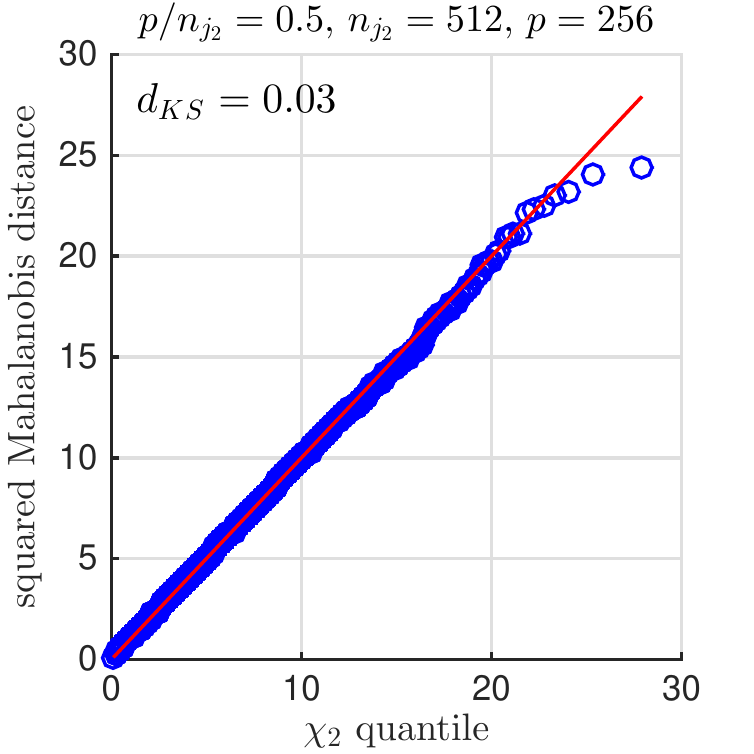}&\includegraphics[width=0.2\linewidth]{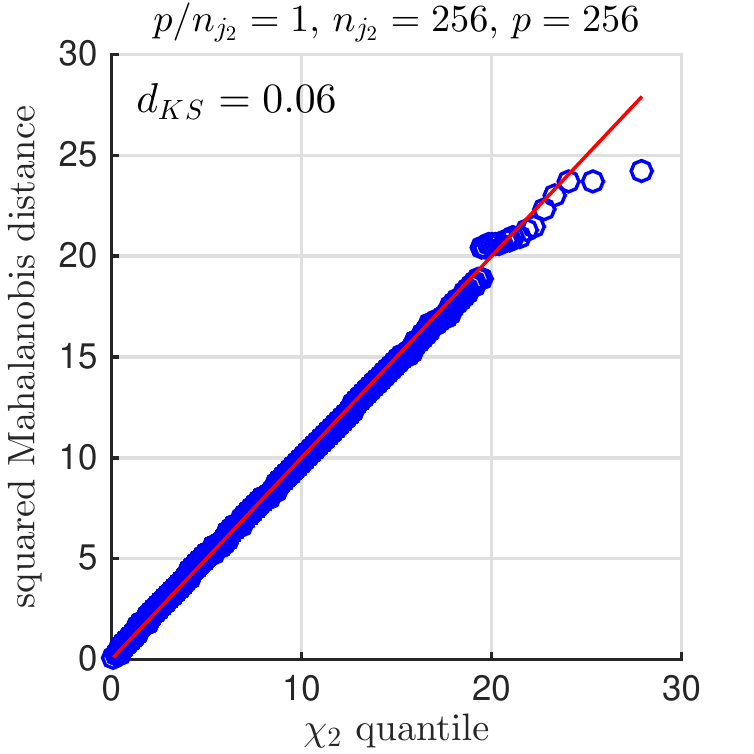}&\includegraphics[width=0.2\linewidth]{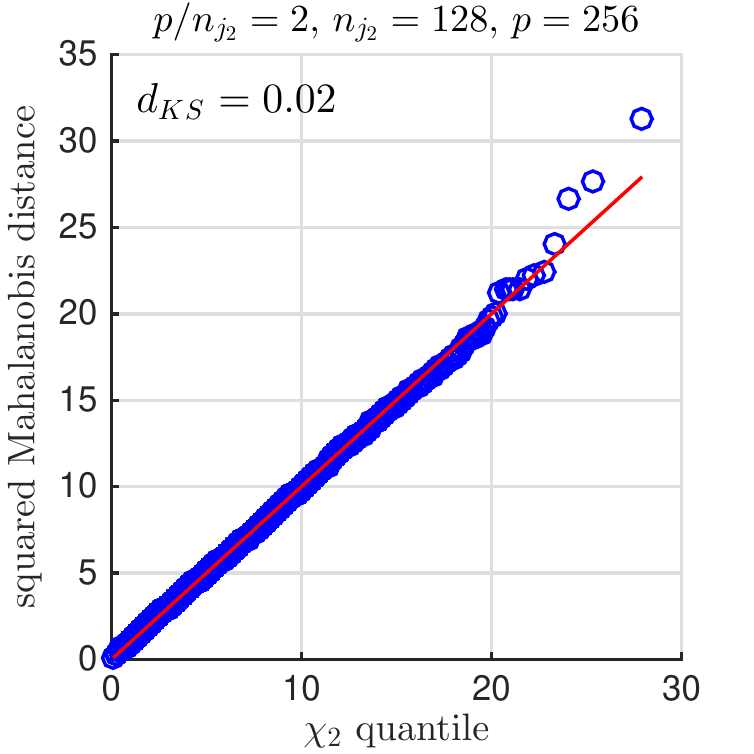}\\
\end{tabular}
\caption[The asymptotic (joint) distribution of $\{\widehat{h}_q\}_{q=1,\hdots,r}$ in high dimensions]{\label{fig:asympt_h-hat_qqplot}{\small {\textbf{The asymptotic (joint) distribution of $\{\widehat{h}_q\}_{q=1,\hdots,r} = \{\widehat{\ell}_{p-r+q}\}_{q=1,\hdots,r}$ in high dimensions.}} Each plot displays Gamma plots (see text in Section \ref{s:MC} for a definition) based on 5000 independent realizations of $\{\widehat{h}_{q}\}_{q=1,\hdots,6}$ for increasing sample size (from top to bottom, respectively) and $p/n_{j_2}=1/2,\,1,\,2$ (left to right column, respectively). The plots further contain Kolmogorov-Smirnov test decisions $d_{KS}$ for the null hypothesis that the Mahalanobis distance follows a $\chi^2_6$ distribution (obtained as averages over $100$ random subsets of size $1250$ of the $5000$ realizations). The plots show that, for any ratio $p/n_{j_2}$ considered, as the sample size increases the distribution of $\{\widehat{h}_{q}\}_{q=1,\hdots,6}$ becomes statistically indistinguishable from a joint Gaussian distribution.  } }
\end{center}
\end{figure}

\clearpage

\begin{figure}
\begin{center}
$p/n_{j_2}=1/2$:\ \\ \ \\

\includegraphics[width=0.85\linewidth]{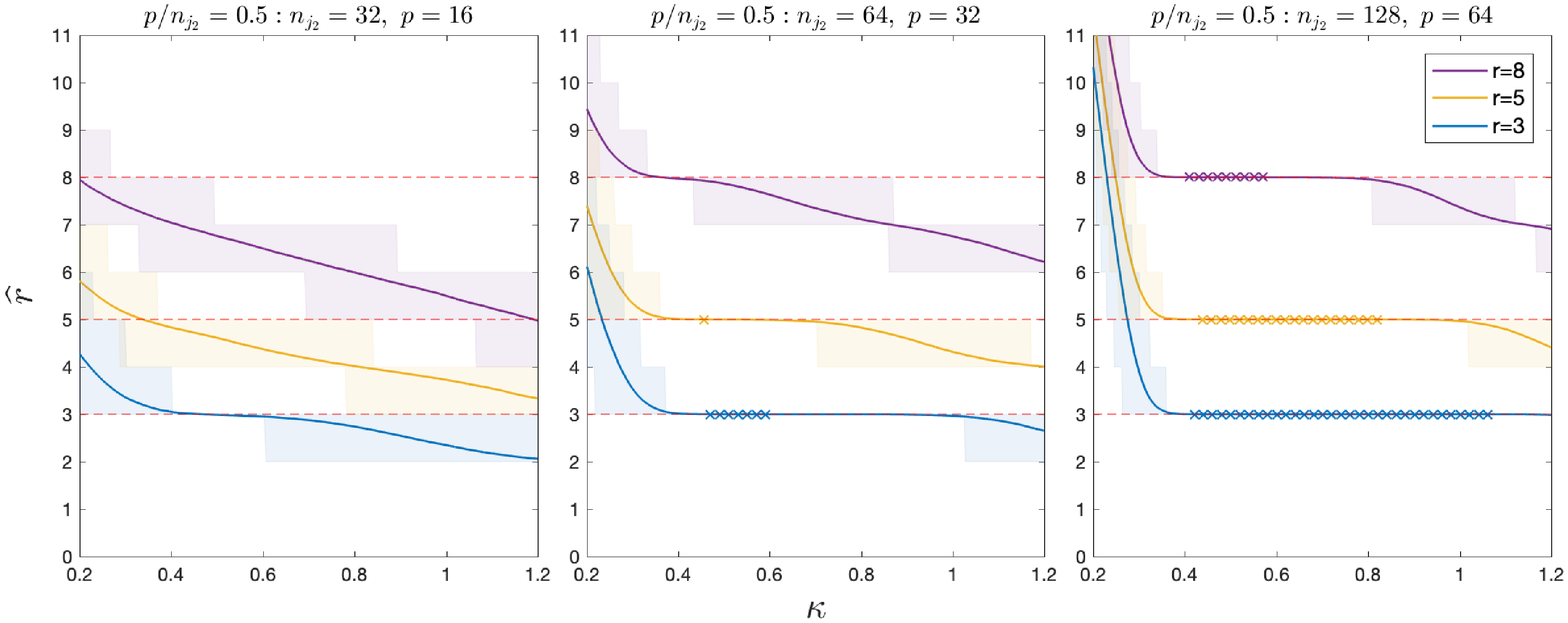}
\end{center}
\begin{center}
$p/n_{j_2}=1$:\ \\ \ \\

\includegraphics[width=0.85\linewidth]{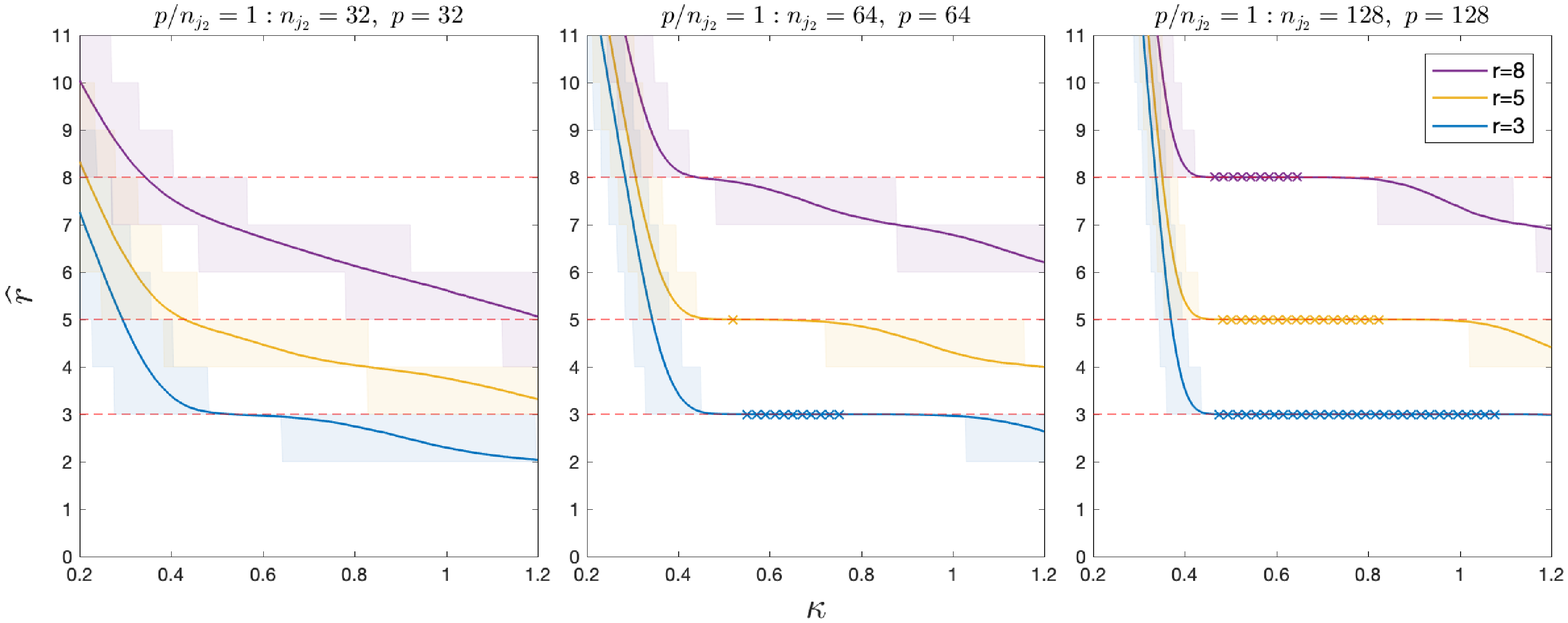}
\end{center}
\begin{center}
$p/n_{j_2}=2$:\ \\ \ \\

\includegraphics[width=0.85\linewidth]{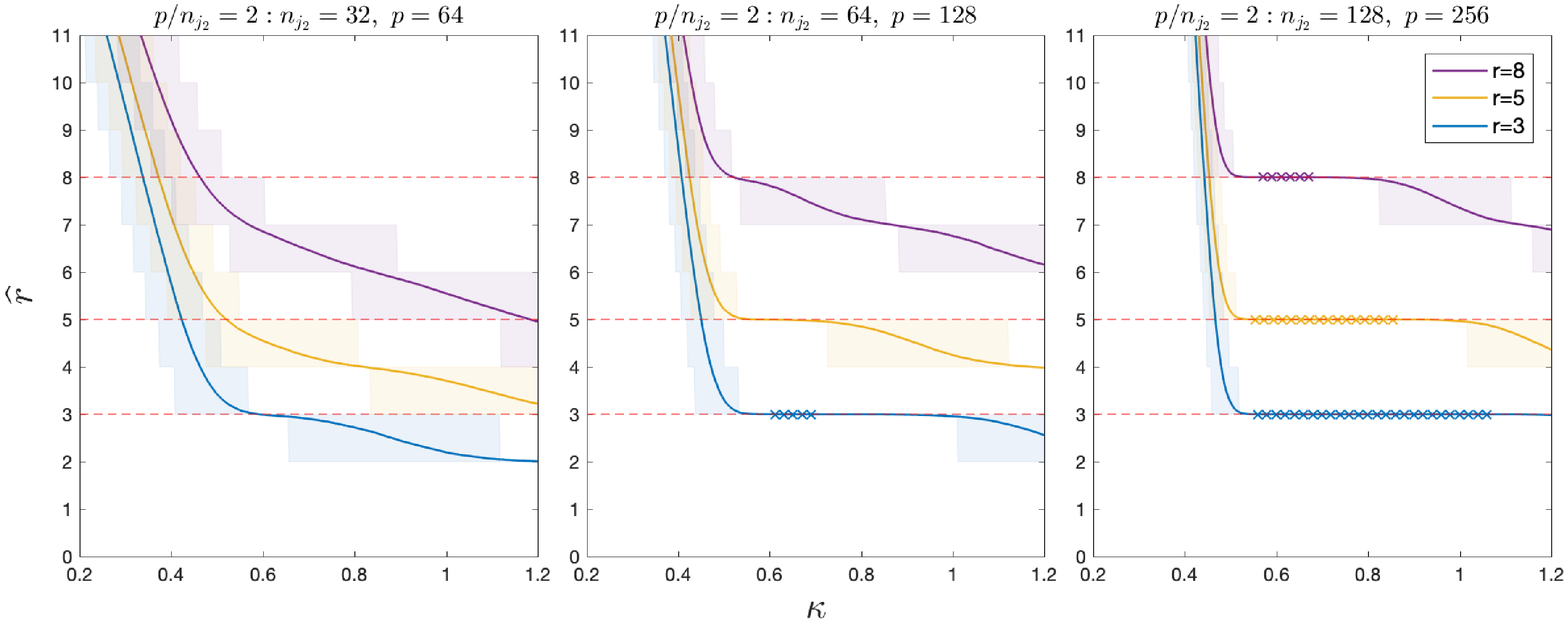}
\end{center}
\caption[The asymptotic performance of $\widehat{r}$]{\label{fig:r-hat}{\small {\textbf{The asymptotic performance of $\widehat{r}(j_1,j_2,\kappa)$.}} Each plot displays $\widehat r$ versus $\kappa$ based on 5000 observations of $\{\Delta_1(j_1,j_2),\ldots, \Delta_{p}(j_1,j_2)\}$ for $p/n_{j_2} \in \{0.5, 1, 2\}$ (top to bottom, respectively), over sample sizes $n \in\{2^{10},2^{12},2^{14}\}$ (left to right column, respectively). The solid lines represent the Monte Carlo average for the particular choice of $r,n$ and $p$. The upper/lower limits of shaded regions correspond the empirical regions between quantiles 0.05 and 0.95. The marked lines (with small ``$x$'' markers) are the points at which the mean was \emph{exactly} equal to the corresponding value of $r$.}}
\end{figure}

\appendix

\section{Assumptions on the wavelet multiresolution analysis}\label{s:MRA_assumptions}

In the main results of the paper, we make use of the following conditions on the underlying wavelet MRA.

\medskip

\noindent {\sc Assumption $(W1)$}: $\psi \in L^2(\bbR)$ is a wavelet function, namely, it satisfies the relations
\begin{equation}\label{e:N_psi}
\int_{\bbR} \psi^2(t)dt = 1 , \quad \int_{\bbR} t^{p}\psi(t)dt = 0, \quad p = 0,1,\hdots, N_{\psi}-1, \quad \int_{\bbR} t^{N_{\psi}}\psi(t)dt \neq 0,
\end{equation}
for some integer (number of vanishing moments) $N_{\psi} \geq 1$.

\medskip

\noindent {\sc Assumption ($W2$)}: the scaling and wavelet functions
\begin{equation}\label{e:supp_psi=compact}
\textnormal{$\phi\in L^1(\bbR)$ and $\psi\in L^1(\bbR)$ are compactly supported }
\end{equation}
and $\widehat{\phi}(0)=1$.

\medskip

\noindent {\sc Assumption $(W3)$}: there is $\alpha > 1$ such that
\begin{equation}\label{e:psihat_is_slower_than_a_power_function}
\sup_{x \in \bbR} |\widehat{\psi}(x)| (1 + |x|)^{\alpha} < \infty.
\end{equation}

\medskip

\noindent {\sc Assumption ($W4$)}: the function
\begin{equation}\label{e:sum_k^m_phi(.-k)}
\sum_{k\in \mathbb{Z}}k^m\phi(\cdot-k)
\end{equation}
is a polynomial of degree $m$ for all $m=0,\hdots,N_{\psi}-1$.

\medskip

Conditions \eqref{e:N_psi} and \eqref{e:supp_psi=compact} imply that $\widehat{\psi}(x)$ exists, is everywhere differentiable and its first $N_{\psi}-1$ derivatives are zero at $x = 0$. Condition \eqref{e:psihat_is_slower_than_a_power_function}, in turn, implies that $\psi$ is continuous (see Mallat \cite{mallat:1999}, Theorem 6.1) and, hence, bounded.

Note that assumptions ($W1-W4$) are closely related to the broad wavelet framework for the analysis of $\kappa$-th order ($\kappa \in \bbN \cup \{0\}$) stationary increment stochastic processes laid out in Moulines et al.\ \cite{moulines:roueff:taqqu:2007:Fractals,moulines:roueff:taqqu:2007:JTSA,moulines:roueff:taqqu:2008} and Roueff and Taqqu~\cite{roueff:taqqu:2009}. The Daubechies scaling and wavelet functions generally satisfy ($W1-W4$) (see Moulines et al.\ \cite{moulines:roueff:taqqu:2008}, p.\ 1927, or Mallat \cite{mallat:1999}, p.\ 253). Usually, the parameter $\alpha$ increases to infinity as $N_{\psi}$ goes to infinity (see Moulines et al.\ \cite{moulines:roueff:taqqu:2008}, p.\ 1927, or Cohen \cite{cohen:2003}, Theorem 2.10.1).

\section{The asymptotic and large scale behavior of the eigenvalues of wavelet random matrices}\label{s:probpaper-results}

In this section, we recap the main results in Abry et al.\ \cite{abry:boniece:didier:wendt:2022:wavelet_eigenanalysis}. It was established that, after proper rescaling, the $r$ largest eigenvalues of a wavelet random matrix $\mathbf{W}(a(n)2^j)$ in high dimensions converge in probability to deterministic functions $\xi_q(2^j)$, $q = 1,\hdots,r$. Thus, such functions can be interpreted as \textit{asymptotic rescaled eigenvalues}. Notably, they display a scaling property. Moreover, the remaining $p(n)-r$ eigenvalues of a wavelet random matrix are bounded in probability.
\begin{theorem}\label{t:lim_n_a*lambda/a^(2h+1)}
(a consequence of Theorem 3.1 in Abry et al.\ \cite{abry:boniece:didier:wendt:2022:wavelet_eigenanalysis}) Assume $(W1-W4)$ and $(A1-A5)$ hold, and fix $j \in \bbN$. %
Then, for $p = p(n)$, the limits
\begin{equation}\label{e:lim_n_a*lambda/a^(2h+1)}
\plim_{n \rightarrow \infty}\frac{\lambda_{p-r+q}\big(\mathbf{W}(a(n)2^j)\big)}{a(n)^{2h_q+ 1}} =: \xi_q(2^j)>0, \quad q=1,\ldots,r,
\end{equation}
exist, and the deterministic functions $\xi_q$ satisfy the scaling relation
\begin{equation}\label{e:xi-i0_scales}
\xi_{q}(2^j)  = 2^{j \hspace{0.5mm}(2 h_{q}+1)} \xi_{q}(1).
\end{equation}
In addition,
\begin{equation}\label{e:lim_lambda_p-r(W)}
\lambda_{1}\big(\mathbf{W}(a(n)2^j)\big)\leq \ldots \leq \lambda_{p-r}\big(\mathbf{W}(a(n)2^j)\big) = O_{\bbP}(1).
\end{equation}
\end{theorem}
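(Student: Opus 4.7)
The proof plan follows a decomposition-plus-interlacing strategy. First, by Assumption $(A1)$, I write $\W(a(n)2^j) = \W_{\textnormal{sig}} + \W_Z(a(n)2^j) + \textnormal{Cross}$, where $\W_{\textnormal{sig}} := P(n)\W_X(a(n)2^j)P(n)^*$ and $\textnormal{Cross}$ denotes the sum of the two cross-term matrices in \eqref{e:W=PWXP*+WZ+PWXZ+WXZ*P*}. Since $(1/2)I$ commutes with $H$, one has $a(n)^{H+(1/2)I} = a(n)^{1/2}\,P_H\,a(n)^D\,P_H^{-1}$ with $D = \textnormal{diag}(h_1,\ldots,h_r)$, and inverting the defining identity \eqref{e:B-hat_a(2^j)} yields $\W_X(a(n)2^j) = P_H\,a(n)^{D+(1/2)I}\,\widehat{\B}_a(2^j)\,a(n)^{D+(1/2)I}\,P_H^*$. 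Combining this with the QR factorization $P(n)P_H = Q(n)R(n)$ from Assumption $(A5)$, I obtain $\W_{\textnormal{sig}} = Q(n)\,\mathbf{M}(n,j)\,Q(n)^*$ with $\mathbf{M}(n,j) := R(n)\,a(n)^{D+(1/2)I}\,\widehat{\B}_a(2^j)\,a(n)^{D+(1/2)I}\,R(n)^*$. Since $Q(n)$ has orthonormal columns, the nonzero eigenvalues of $\W_{\textnormal{sig}}$ coincide with those of the $r\times r$ matrix $\mathbf{M}(n,j)$.

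For the boundedness claim \eqref{e:lim_lambda_p-r(W)}, let $\Pi_n$ denote the orthogonal projection onto $\textnormal{col}(Q(n))$ and set $\Pi_n^\perp := I - \Pi_n$. The key observation is that both $\W_{\textnormal{sig}}$ and $\textnormal{Cross}$ vanish upon double-sided conjugation by $\Pi_n^\perp$, since their relevant column ranges lie inside $\textnormal{col}(Q(n))$. Therefore $\Pi_n^\perp\,\W(a(n)2^j)\,\Pi_n^\perp = \Pi_n^\perp\,\W_Z(a(n)2^j)\,\Pi_n^\perp$, and Cauchy's eigenvalue interlacing applied to this $(p-r)$-dimensional principal compression gives $\lambda_i(\W(a(n)2^j)) \leq \|\W_Z(a(n)2^j)\| = O_{\bbP}(1)$ for $i = 1,\ldots,p-r$, with the final bound supplied by Assumption $(A2)$.

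For the convergence in \eqref{e:lim_n_a*lambda/a^(2h+1)}, Assumption $(A3)$ yields $\widehat{\B}_a(2^j) \to \B(2^j) \in \mathcal{S}_{>0}(r,\bbR)$ in probability, while Assumption $(A5)$ yields $R(n) \to R$ with $R^*R$ positive definite. The $(i,i')$ entry of $a(n)^{D+(1/2)I}\widehat{\B}_a(2^j)a(n)^{D+(1/2)I}$ grows like $a(n)^{h_i+h_{i'}+1}$; grouping indices by equal $h$-values, the rescaled eigenvalue $\lambda_q(\mathbf{M}(n,j))/a(n)^{2h_q+1}$ converges in probability to a strictly positive limit $\xi_q(2^j)$, identified as an eigenvalue of a suitable principal sub-block of $R\,\B(2^j)\,R^*$. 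A Weyl-type perturbation argument, exploiting the rank-at-most-$2r$ structure of $\W_{\textnormal{sig}}+\textnormal{Cross}$, then transfers these limits to the top $r$ eigenvalues of $\W(a(n)2^j)$. The scaling identity \eqref{e:xi-i0_scales} follows from the entrywise scaling $\B(2^j)_{ii'} = 2^{j(h_i+h_{i'}+1)}\B(1)_{ii'}$ recorded in \eqref{e:B(2^j)_entrywise_scaling}, which propagates through the above identification of $\xi_q(2^j)$.

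I expect the main difficulty to lie in the final transfer step to the eigenvalues of $\W(a(n)2^j)$. When the $h_q$ are not all distinct, multiple eigenvalues of $\mathbf{M}(n,j)$ grow at the same rate, so $\xi_q(2^j)$ must be extracted from a nontrivial principal block whose positivity and identification require joint use of $(A3)$ and $(A5)$. Moreover, the cross term carries operator norm only $O_{\bbP}(a(n)^{h_r+1/2})$, which is not automatically negligible compared with $a(n)^{2h_q+1}$ when $h_q$ is close to $-1/2$. A naive Weyl perturbation is therefore insufficient; instead, one must exploit the $\Pi_n$-block decomposition above together with off-diagonal bounds coming from $(A2)$ to show that the top $r$ eigenvalues of $\W(a(n)2^j)$ inherit the limits obtained for $\mathbf{M}(n,j)$.
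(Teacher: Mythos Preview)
The paper does not actually prove this theorem; it is imported from Abry et al.\ (2022) as ``a consequence of Theorem~3.1''. From the auxiliary lemmas reproduced in the appendix (Lemmas~\ref{l:|<p3,uq(n)>|a(n)^{h_3-h_1}=O(1)}, \ref{l:sum<pi(n),up-r+q(n)>2-o(a-varpi)_first}, \ref{l:<p,w>=infinitesimal} and Corollary~\ref{c:PWXP^*+R_asymptotics}) one can infer that the cited argument proceeds by direct angular control of the eigenvectors ${\mathbf u}_{p-r+q}(n)$ and a variational sandwich on Rayleigh quotients, rather than by your QR--interlacing route. Your Cauchy interlacing argument for \eqref{e:lim_lambda_p-r(W)} via the compression $\Pi_n^\perp \W(a(n)2^j) \Pi_n^\perp = \Pi_n^\perp \W_Z(a(n)2^j) \Pi_n^\perp$ is correct and clean.

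There are two concrete gaps in the convergence part. First, your derivation of the scaling relation \eqref{e:xi-i0_scales} invokes \eqref{e:B(2^j)_entrywise_scaling}, but in this paper that identity is only recorded for the ofBm instance in Example~\ref{ex:Prop_4_1_in_Abry_et_al}; it is not part of the abstract assumptions $(A1)$--$(A5)$, under which $\B(2^j)$ is merely the limit postulated in \eqref{e:|bfB_a(2^j)-B(2^j)|=O(shrinking)} with no a~priori relation to $\B(1)$. Second, your identification of $\xi_q(2^j)$ as ``an eigenvalue of a suitable principal sub-block of $R\,\B(2^j)\,R^*$'' is only correct when all $h_q$ coincide (cf.\ \eqref{e:xi_q(2^j)=lambda-q(AB(2^j))}). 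When the $h_q$ are distinct, the limit is not a principal minor but a Schur-complement--type quantity: in the notation of Proposition~\ref{p:|lambdaq(EW)-xiq(2^j)|_bound_2}, $\xi_q(2^j) = g(\boldsymbol\gamma_q,\mathbf{x}_*)$ where $\mathbf{x}_*$ \emph{minimizes} over the ${\mathcal I}_+$ coordinates, so the ${\mathcal I}_+$ block enters through its inverse rather than by truncation.

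Your closing caveat is accurate: the scalar bound $\|\textnormal{Cross}\| = O_\bbP(a^{h_r+1/2})$ does not dominate $a^{2h_q+1}$ for small $h_q$, so Weyl alone cannot transfer the limits from $\mathbf{M}(n,j)$ to $\W(a(n)2^j)$. The cited proof handles this precisely through the eigenvector bounds in Lemmas~\ref{l:|<p3,uq(n)>|a(n)^{h_3-h_1}=O(1)} and \ref{l:sum<pi(n),up-r+q(n)>2-o(a-varpi)_first}, which show that the top $r$ eigenvectors of $\W$ are asymptotically confined to $\textnormal{col}(P(n)P_H)$ with quantitative leakage; your $\Pi_n$-block strategy would need to reproduce those bounds to close the argument.
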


Moreover, the asymptotic normality of the $r$ largest wavelet log-eigenvalues in high dimensions was also established. The statement covers the central cases of simple ($h_1 < \hdots < h_r$) and of identical ($h_1 = \hdots = h_r$) scaling eigenvalues. Note that, by comparison to Theorem \ref{t:lim_n_a*lambda/a^(2h+1)}, the asymptotic normality of wavelet log-eigenvalues requires some additional condition so as to ensure the finite-sample differentiability of the eigenvalues with respect to scaling parameters (see also Remark \ref{r:asympt_rescaled_eigenvalues}).
\begin{theorem}\label{t:asympt_normality_lambdap-r+q}
(a consequence of Theorem 3.2 in Abry et al.\ \cite{abry:boniece:didier:wendt:2022:wavelet_eigenanalysis}) Under the assumptions of Theorem \ref{t:h-hat_asympt_normality}, for $p = p(n)$,
\begin{equation}\label{e:asympt_normality_lambda2}
\bbR^{m \cdot r} \ni \Big( \sqrt{n_{a,j}}\Big( \log \lambda_{p-r+q}\big({\mathbf W}(a(n)2^{j})\big)  - \log \lambda_{p-r+q}\big(\bbE {\mathbf W} (a(n)2^{j})\big) \Big)_{q=1,\hdots,r} \Big)_{j=j_1,\hdots,j_2} \stackrel{d}\rightarrow {\mathcal N}(0,\Sigma_{\lambda})
\end{equation}
as $n \rightarrow \infty$  for some $\Sigma_{\lambda} \in {\mathcal S}_{\geq 0}(m \cdot r,\bbR)$. %
\end{theorem}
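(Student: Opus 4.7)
The plan is to derive this statement as a direct corollary of Theorem 3.2 in Abry et al.\ (2022) by verifying its hypotheses under our setting, so the bulk of the work is structural rather than computational. First I would use Assumption $(A1)$ to decompose
\[
{\mathbf W}(a(n)2^j) = P(n){\mathbf W}_X(a(n)2^j)P^*(n) + {\mathbf W}_Z(a(n)2^j) + P(n){\mathbf W}_{X,Z}(a(n)2^j) + {\mathbf W}^*_{X,Z}(a(n)2^j)P^*(n),
\]
and then rescale by $a(n)^{-H - (1/2)I}$ on the signal block to pass to the matrix $\widehat{{\mathbf B}}_a(2^j)$ of \eqref{e:B-hat_a(2^j)}. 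By Assumption $(A2)$, the noise-type terms ${\mathbf W}_Z$ and $P{\mathbf W}_{X,Z}$ contribute only $O_{\bbP}(1)$ after rescaling, which is asymptotically negligible compared to the $r$ largest eigenvalues that grow like $a(n)^{2h_q+1}$. This reduces the problem to the analysis of the top $r$ eigenvalues of $R(n)\widehat{{\mathbf B}}_a(2^j)R(n)^*$, where $R(n)$ is the triangular QR factor of $P(n)P_H$.

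Next, combining Assumption $(A3)$ (the CLT for $\sqrt{n_{a,j}}(\widehat{{\mathbf B}}_a(2^j)-{\mathbf B}_a(2^j))$) with Assumption $(A5)$ ($R(n)\to R$ at rate $a(n)^{-\varpi}$) and the negligibility bound \eqref{e:|bfB_a(2^j)-B(2^j)|=O(shrinking)} for ${\mathbf B}_a(2^j)-\mathbf{B}(2^j)$, I would establish a joint CLT for the appropriately rescaled top-$r$ block across the octaves $j_1,\ldots,j_2$. The calibration in Assumption $(A4)$ is critical here: the condition $n/a(n)^{2\varpi+1}\to 0$ guarantees that the deterministic bias from the difference $R(n)-R$ and from ${\mathbf B}_a(2^j)-\mathbf{B}(2^j)$ vanishes faster than the $n_{a,j}^{-1/2}$ fluctuation scale, so these terms do not enter the limit.

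The core step is then eigenvalue perturbation. The map from a symmetric matrix to any one of its eigenvalues is Fr\'echet differentiable at matrices with simple spectrum, and composition with $\log$ is smooth. In case (i) of Theorem \ref{t:h-hat_asympt_normality}, the distinct Hurst exponents provide an asymptotic spectral gap of order $a(n)^{2(h_{q+1}-h_q)}$ among the top $r$ eigenvalues, while the bottom $p-r$ eigenvalues are $O_{\bbP}(1)$ by Theorem \ref{t:lim_n_a*lambda/a^(2h+1)}; in case (ii), the distinctness of the functions $\xi_q(1)$ delivers the required separation within the top $r$ block in the rescaled limit. An application of the delta method to the log-eigenvalue functionals, using the previously established joint CLT for the rescaled top block, then yields \eqref{e:asympt_normality_lambda2}.

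The main obstacle I anticipate is the eigenvalue-perturbation step in case (ii), where the scaling matrix $H$ is a multiple of the identity and the asymptotic spectral separation comes entirely from the $\xi_q(2^j)$'s rather than from differing powers of $a(n)$. Here one must show that the centering and rescaling in \eqref{e:asympt_normality_lambda2} by $\log \lambda_{p-r+q}(\bbE {\mathbf W})$ exactly absorbs the bias contributions coming from both $R(n)-R$ and ${\mathbf B}_a(2^j)-\mathbf{B}(2^j)$, and that the Fr\'echet derivatives of the eigenvalue maps at the limiting matrix are well defined and combine cleanly to produce a single symmetric covariance $\Sigma_\lambda$. These technical points are addressed in Theorem 3.2 of Abry et al.\ (2022), which we invoke once the hypotheses above have been matched.
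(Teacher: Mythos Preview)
Your approach is consistent with the paper's, but you have done more work than the paper itself does. The paper gives no proof of this theorem: it is stated in Appendix~\ref{s:probpaper-results} purely as a recap of Theorem~3.2 in Abry et al.\ (2022), with a brief remark noting that Assumption~$(A4)$ here (and likewise $(A5)$) is slightly stronger than its counterpart in that reference, so the hypotheses of the cited theorem are satisfied. In other words, the paper treats the result as a black-box import once the assumptions are matched.

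Your proposal goes further and sketches the internal architecture of the cited proof --- the decomposition via $(A1)$, the negligibility of noise terms via $(A2)$, the CLT via $(A3)$ combined with $(A5)$, the rate calibration via $(A4)$, and the delta-method/eigenvalue-perturbation step under the two spectral-gap scenarios (i) and (ii). That sketch is broadly accurate as a description of how Theorem~3.2 of Abry et al.\ (2022) is proved, but it is not needed here: the present paper simply invokes that theorem. If your intent is to match the paper, a one-line argument (``the assumptions of Theorem~\ref{t:h-hat_asympt_normality} imply those of Theorem~3.2 in Abry et al.\ (2022); see the remark following Theorem~\ref{t:asympt_normality_lambdap-r+q}'') suffices.
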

\begin{remark}
Note that Assumption $(A4)$ (see \eqref{e:p(n),a(n)_conditions} and \eqref{e:varpi_parameter}) implies the analogous Assumption ($A4)$ of Abry et al.\ \cite{abry:boniece:didier:wendt:2022:wavelet_eigenanalysis}, which in turn is assumed in Theorems 3.1 and 3.2 in Abry et al.\ \cite{abry:boniece:didier:wendt:2022:wavelet_eigenanalysis}. The slightly stronger conditions \eqref{e:p(n),a(n)_conditions} and \eqref{e:varpi_parameter}, as well as \eqref{e:<p1,p2>=c12_2}, are needed in Theorem \ref{t:h-hat_asympt_normality} in this paper, i.e., in the context of the wavelet eigenvalue regression.
\end{remark}

\section{Proofs}\label{s:proof_main_results}

In this section, we provide the proofs the main results of the paper. We often use the following notation. For an arbitrary $q\in \{1,\ldots,r\}$, $\mathcal{I}_-$, $\mathcal{I}_0$ and $\mathcal{I}_+$ are index sets given by the relations
\begin{equation} \label{e:def_indexsets}
 \begin{array}{lll}
\mathcal{I}_- := \{\ell: h_\ell < h_q\}, \quad \mathcal{I}_0 := \{\ell: h_\ell =h_q\}, \quad \mathcal{I}_+ := \{\ell: h_\ell > h_q\}.
 \end{array}
 \end{equation}
Note that $\mathcal{I}_-$ and $\mathcal{I}_+$ are possibly empty. Also write
\begin{equation}\label{e:r1,r2,r3}
r_1:= \textnormal{card}(\mathcal{I}_-), \quad r_2:= \textnormal{card}(\mathcal{I}_0) \geq 1, \quad r_3:= \textnormal{card}(\mathcal{I}_+).
\end{equation}
Throughout this section, for notational simplicity, we write
\begin{equation}\label{e:P(n)PH_equiv_P}
P(n)P_H \equiv P(n) \equiv P,
\end{equation}
whose column vectors are denoted by ${\mathbf p}_{\ell}(n) \equiv {\mathbf p}_{\ell}$, $\ell = 1,\hdots,r$. Also for notational convenience, we may write $a\equiv a(n)$.\\

\noindent {\sc Proof of Theorem  \ref{t:h-hat_consistency}}: By Theorem \ref{t:lim_n_a*lambda/a^(2h+1)}, by the properties \eqref{e:sum_wj=0,sum_jwj=1} of the regression weights $w_j$, $j = 1,\hdots,m$, and by relation \eqref{e:xi-i0_scales},
$$
 \widehat h_q = -\frac{1}{2} + \frac{1}{2}\sum_{j=j_1}^{j_2} w_j \log_2 \lambda_{p-r+q}({\mathbf W}(a(n)2^j)) = -\frac{1}{2} + \frac{1}{2}\sum_{j=j_1}^{j_2} w_j  \log_2 \Big(\lambda_{p-r+q}({\mathbf W}(a(n)2^j))/a(n)^{2 h_q+1} \Big)
$$
$$
\stackrel \bbP \to -\frac{1}{2} + \frac{1}{2}\sum_{j=j_1}^{j_2} w_j \log_2 \xi_q(2^j) = -\frac{1}{2} +\frac{1}{2} \sum_{j=j_1}^{j_2} w_j \log_2\Big(2^{j \hspace{0.5mm}(2 h_{q}+1)} \xi_{q}(1)\Big)= h_q. \quad \Box\\
$$

The proof of Theorem \ref{t:h-hat_asympt_normality} mainly relies on Proposition \ref{p:|lambdaq(EW)-xiq(2^j)|_bound_2}, stated and proved next. The proposition establishes not only that the $r$ largest rescaled eigenvalues of the deterministic matrix $\E \W(a2^j)$ converge to their respective asymptotic rescaled eigenvalues $\xi_q(2^j)$ (cf.\ expression \eqref{e:lim_n_a*lambda/a^(2h+1)}), but also it provides an upper bound on the associated convergence rate.

\begin{proposition}\label{p:|lambdaq(EW)-xiq(2^j)|_bound_2}
Fix $j \in \bbN$ and suppose conditions $(W1-W4)$ and $(A1-A5)$ hold. Suppose that either
\begin{itemize}
\item[(i)] $0<h_1<\ldots<h_r<1$; or
\item[(ii)] $h_1=\ldots=h_r,$ and whenever $q_1\neq q_2$, the functions $\xi_q(2^j)$ in \eqref{e:lim_n_a*lambda/a^(2h+1)} satisfy
\begin{equation}\label{e:xiq_distinct_2}
\xi_{q_1}(1)\neq \xi_{q_2}(1).
\end{equation}
\end{itemize}
Then, for some $C > 0$ that does not depend on $j$,
\begin{equation}\label{e:|lambdaq(EW)-xiq(2^j)|_bound}
\Big|\frac{\lambda_{p-r+q}(\bbE {\mathbf W}(a(n)2^j))}{a(n)^{2h_q+1}} - \xi_q(2^j)\Big| \leq \frac{C}{a(n)^{ \varpi}}, \quad q = 1,\hdots,r.
\end{equation}
\end{proposition}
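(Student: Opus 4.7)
The plan is to decompose $\E \mathbf{W}(a(n)2^j)$ into signal and noise components and then carry out a refined spectral analysis of the signal. Write $a \equiv a(n)$. Since $X$ and $Z$ are independent and condition \eqref{e:N_psi} forces the wavelet coefficients to be mean zero, $\E \mathbf{W}_{X,Z}(a 2^j) = 0$, so $\E \mathbf{W}(a 2^j) = P(n)\,\E\mathbf{W}_X(a 2^j)\,P(n)^* + \E\mathbf{W}_Z(a 2^j)$. By Assumption $(A2)$, $\|\E\mathbf{W}_Z(a 2^j)\| = O(1)$, so Weyl's inequality yields $|\lambda_{p-r+q}(\E\mathbf{W}(a 2^j)) - \lambda_{p-r+q}(P(n)\E\mathbf{W}_X(a 2^j) P(n)^*)| = O(1)$. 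Since $\varpi \leq h_1/2 + 1/4 \leq 2h_1 + 1 \leq 2h_q + 1$ (using $h_q \geq h_1 > -1/2$), dividing by $a^{2h_q+1}$ bounds this noise contribution by $O(a^{-\varpi})$.

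Second, I would reduce the problem to $r \times r$ matrices. Using the definition of $\mathbf{B}_a(2^j)$ in \eqref{e:B-hat_a(2^j)} and the QR decomposition $P(n)P_H = Q(n)R(n)$ from Assumption $(A5)$, the nonzero eigenvalues of the rank-$r$ signal matrix $P(n)\E\mathbf{W}_X(a 2^j)P(n)^*$ coincide with those of
$$\Psi(a) := A(n)^{1/2}\, D_a\, \mathbf{B}_a(2^j)\, D_a\, A(n)^{1/2}, \qquad D_a := \diag(a^{h_i + 1/2})_{i=1}^r,\quad A(n) := R(n)^*R(n).$$
Invoking the entrywise scaling $\mathbf{B}(2^j)_{ii'} = 2^{j(h_i + h_{i'} + 1)} \mathbf{B}(1)_{ii'}$ from Example~\ref{ex:Prop_4_1_in_Abry_et_al}, and setting $b := a 2^j$, $\Lambda_b := \diag(b^{h_i})$, the key identity is $D_a \mathbf{B}(2^j) D_a = a 2^j \Lambda_b \mathbf{B}(1)\Lambda_b$. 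Hence the ``idealized'' version of $\Psi(a)$ (replacing $\mathbf{B}_a \to \mathbf{B}$ and $A(n) \to A$) equals $a 2^j M(b)$, where $M(b) := A^{1/2} \Lambda_b \mathbf{B}(1)\Lambda_b A^{1/2}$.

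Third, I would identify the eigenvalues of $M(b)$. In Case~(ii), $\Lambda_b = b^h I$ and $M(b) = b^{2h}A^{1/2}\mathbf{B}(1)A^{1/2}$ exactly, with eigenvalues $b^{2h}\xi_q(1)$ (distinct by \eqref{e:xiq_distinct_2}). In Case~(i), the matrix $M(b)$ is ``graded'' with $(i,i')$ entry of order $b^{h_i + h_{i'}}$; a recursive Schur-complement argument -- identifying the leading eigenvalue, whose eigenvector is asymptotically aligned with the largest-scale direction, and iterating on the orthogonal complement -- yields $\lambda_q(M(b)) = b^{2h_q}\xi_q(1)(1 + O(b^{-2\eta}))$ with $\eta := \min_{2 \leq i \leq r}(h_i - h_{i-1}) \geq \varpi$. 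Either way, using $\xi_q(2^j) = 2^{j(2h_q + 1)}\xi_q(1)$ from Theorem~\ref{t:lim_n_a*lambda/a^(2h+1)}, we conclude $\lambda_q(a 2^j M(a 2^j))/a^{2h_q + 1} = \xi_q(2^j)(1 + O(a^{-\varpi}))$.

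The main obstacle is the fourth step: transferring the two perturbations $\|\mathbf{B}_a(2^j) - \mathbf{B}(2^j)\| = O(a^{-\beta})$ from $(A3)$ and $\|A(n) - A\| = O(a^{-\varpi})$ from $(A5)$ into the eigenvalue bound at the \emph{correct} rate. Writing $\delta\Psi := \Psi(a) - a 2^j M(a 2^j)$, a naive Weyl bound gives $\|\delta\Psi\| \lesssim \|D_a\|^2 a^{-\varpi} = O(a^{2h_r + 1 - \varpi})$; dividing by $a^{2h_q + 1}$ yields only the coarse relative error $a^{2(h_r - h_q) - \varpi}$, which fails when $h_q < h_r$. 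The remedy is a \emph{scale-adapted} first-order perturbation, exploiting the fact that the $q$-th eigenvector $u_q$ of the idealized matrix concentrates in directions for which $|u_q^* \delta\Psi u_q| = O(a^{2h_q + 1 - \varpi})$, rather than $\|\delta\Psi\|$. Equivalently, one conjugates $\Psi(a)$ by a graded diagonal rescaling that normalizes all eigenvalues, eigenvalue gaps, and perturbations to $O(1)$ in the adapted scale, then applies classical matrix perturbation theory. Combining this with the first step gives $|\lambda_{p-r+q}(\E\mathbf{W}(a 2^j))/a^{2h_q+1} - \xi_q(2^j)| = O(a^{-\varpi})$ uniformly in $j$ over any bounded range of octaves.
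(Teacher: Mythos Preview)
Your treatment of case~(ii) is essentially the paper's own argument: reduce to the $r\times r$ matrix $R(n)\mathbf B_a(2^j)R(n)^*$, identify $\xi_q(2^j)=\lambda_q(R\mathbf B(2^j)R^*)$, and apply Weyl's inequality using $\|R(n)-R\|=O(a^{-\varpi})$ and $\|\mathbf B_a-\mathbf B\|=O(a^{-\beta})$. That part is fine.

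For case~(i), however, your step~4 has a genuine gap. You propose to control $\lambda_q(\Psi(a))-\lambda_q(\Psi_0(a))$ via the first-order formula $u_q^*\,\delta\Psi\,u_q$, where $u_q$ is the $q$-th eigenvector of the idealized matrix $\Psi_0(a)=A^{1/2}D_a\mathbf B(2^j)D_aA^{1/2}$. But the first-order perturbation identity is only valid when $\|\delta\Psi\|$ is small relative to the spectral gap around $\lambda_q(\Psi_0)$. Here $\|\delta\Psi\|\asymp a^{2h_r+1-\varpi}$ (the perturbation $A(n)^{1/2}-A^{1/2}$ has no graded structure, so after sandwiching by $D_a$ its norm is governed by the \emph{largest} scale $a^{2h_r+1}$), while the gap below $\lambda_q(\Psi_0)$ is only of order $a^{2h_q+1}$; for $q<r$ the ratio $\|\delta\Psi\|/\text{gap}$ diverges, and the second-order remainder $O(\|\delta\Psi\|^2/\text{gap})=O(a^{4h_r-2h_q+1-2\varpi})$ swamps the target bound $O(a^{2h_q+1-\varpi})$. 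Your alternative ``graded diagonal rescaling'' is too vague to repair this: congruence by $D_a^{-1}$ changes the eigenvalues, and similarity by $D_a$ does not normalize the perturbation. (A minor separate point: you invoke the entrywise scaling $\mathbf B(2^j)_{ii'}=2^{j(h_i+h_{i'}+1)}\mathbf B(1)_{ii'}$ from Example~\ref{ex:Prop_4_1_in_Abry_et_al}, which is specific to ofBm and not among the assumptions $(A1)$--$(A5)$.)

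The paper's route in case~(i) avoids perturbation theory altogether in favor of a direct variational sandwich. For each $q$, it writes the Rayleigh quotient of $\bbE\mathbf W(a2^j)/a^{2h_q+1}$ at a unit vector $u$ as $g_n(P^*u;\mathbf x)$, where $\mathbf x\in\bbR^{r_3}$ collects the rescaled inner products $\langle\mathbf p_\ell,u\rangle a^{h_\ell-h_q}$ for $\ell\in\mathcal I_+$. It then bounds $\lambda_{p-r+q}/a^{2h_q+1}$ from below by minimizing $g_n(\boldsymbol\gamma_q(n);\cdot)$ over $\mathbf x$ (a Schur-complement step), and from above by evaluating at an explicit unit vector $\mathbf v(n)\in\textnormal{span}\{\mathbf u_{p-r+q}(n),\dots,\mathbf u_p(n)\}$ built in Lemma~\ref{l:<p,w>=infinitesimal} so that the $\mathcal I_+$-coordinates are prescribed. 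Both bounds are then shown to equal $\xi_q(2^j)+O(a^{-\varpi})$; the rate comes from the eigenvector alignment estimate $1-\langle\widetilde{\mathbf p}_k(n),\mathbf u_{p-r+k}(n)\rangle^2=O(a^{-2\varpi})$ of Lemma~\ref{l:p-tilde_upk=o(a-w)} together with $\|R(n)-R\|=O(a^{-\varpi})$ and $\|\mathbf B_a-\mathbf B\|=O(a^{-\beta})$. This sandwich never requires comparing two nearby spectra with a large-norm perturbation, which is precisely the obstacle your step~4 runs into.
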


\begin{proof}
Assume condition $(i)$ holds. We prove only the statement for $q\in\{2,\ldots,r-1\}$ since the cases $q=1,r$ can be handled by a simplified version of the same argument.
For
$$
{\boldsymbol \gamma} = \big(\gamma_1,\hdots,\gamma_r \big)^* \in \bbR^r, \quad \mathbf x \in \bbR^{r_3},
$$
consider the deterministic function
\begin{equation}\label{e:def_gn_deterministic}
g_{n}(\boldsymbol \gamma ;\mathbf{x}):=\mathbf y^*_n(\boldsymbol \gamma;\mathbf{x}) \B_a(2^j) \mathbf y_n(\boldsymbol \gamma;\mathbf{x}) \in \bbR,
\end{equation}
where
$$
{\mathbf y}_n(\boldsymbol \gamma ;\mathbf{x})=\Big(\underbrace{\gamma_1 a(n)^{h_1-h_q},\ldots,\gamma_{r_1} a(n)^{h_{r_1}-h_q}}_{r_1},\gamma_{q},\underbrace{x_{q+1},\ldots,x_{r}}_{r_3}\Big)^*.
$$
Now let
\begin{equation}\label{e:g(gamma-q;x-vec)}
g(\boldsymbol \gamma;\mathbf{x}) =  {\mathbf y}^*(\boldsymbol \gamma ;\mathbf{x}) \B(2^j) {\mathbf y}(\boldsymbol \gamma ;\mathbf{x}) \in \bbR,
\end{equation}
where
$$
\bbR^{r} \ni {\mathbf y}( \boldsymbol \gamma ;\mathbf{x}) = \Big(\underbrace{0,\ldots,0}_{r_1}, \gamma_{q},\underbrace{x_{q+1},\ldots,x_{r}}_{r_3}\Big)^* = \lim_{n \rightarrow \infty}{\mathbf y}_n(\boldsymbol \gamma ;\mathbf{x}).
$$
(observe that $\mathbf y(\boldsymbol \gamma; \mathbf x)$ depends on $\boldsymbol \gamma$ only through its $q$--th entry $\gamma_{q}$). For $\ell=1,\ldots,p$, let $\mathbf u_{\ell}(n)$ denote a unit  eigenvector of $\E \W(a2^j)$ associated with its $\ell$--th eigenvalue in nondecreasing order. Define
\begin{equation}\label{e:gamma-q(n)}
\boldsymbol \gamma_q(n):= P^*(n)\mathbf u_{p-r+q}(n).
\end{equation}
By Proposition \ref{p:|lambdaq(EW)-xiq(2^j)|_bound},  $\mathbf u_{p-r+q}(n)$ can be chosen so that the limit
\begin{equation}\label{e:lim_gamma-q(n)}
\lim_{n\to \infty}\boldsymbol \gamma_q(n)=:\boldsymbol \gamma_q=(\gamma_{1,q},\ldots,\gamma_{r,q})^*, \quad n\to\infty,
\end{equation}
exists. So, let
\begin{equation}\label{e:x_*(n),x_*}
\mathbf x_*(n)\in \bbR^{r_3} \quad \textnormal{and} \quad \mathbf x_*=(x_{*,q+1},\ldots,x_{*,r})\in \bbR^{r_3}
\end{equation}
be the (unique) minimizers of the functions $g_n(\boldsymbol \gamma_q(n);\cdot)$ and $g( \boldsymbol \gamma_q ;\cdot)$, respectively, where such functions are given in \eqref{e:def_gn_deterministic} and \eqref{e:g(gamma-q;x-vec)}. Observe that, as $n\to\infty$, $\mathbf x_*(n)\to \mathbf x_*$, implying $\mathbf y_n(\boldsymbol \gamma_q(n),\mathbf x_*(n)) \to \mathbf y(\boldsymbol \gamma_q,\mathbf x_*)$. Hence,
\begin{equation}\label{e:gn_to_g}
g_n(\boldsymbol \gamma_q(n),\mathbf x_*(n))\to g(\boldsymbol \gamma_q,\mathbf x_*),\quad n\to\infty.
\end{equation}
By Lemma \ref{l:<p,w>=infinitesimal},  for large enough $n$ we may take a sequence of unit vectors $\mathbf v(n) \in \text{span}\{{\mathbf u}_{p-r+q}(n),\ldots,{\mathbf u}_{p}(n)\}$ such that
\begin{equation}\label{e:<p-ell(n),v(n)>}
\langle \p_\ell(n),\mathbf v(n)\rangle = \frac{x_{*,\ell}}{a^{h_\ell-h_q}},\quad \ell= q+1,\ldots,r, \quad P^*(n)\mathbf v(n)\to \boldsymbol\gamma_q.
\end{equation}
Therefore, as $n \rightarrow \infty$,
$$
a^{\mathbf h-h_q I}P^*(n)\mathbf v(n)=\mathbf y_n\big(P^*(n)\mathbf v(n),\mathbf x_*(n)\big)  \to {\mathbf y}( \boldsymbol \gamma_q ;\mathbf{x}_*),
$$
which implies
\begin{equation}\label{e:gn_to_g_2}
g_n(P^*(n)\mathbf v(n),\mathbf x_*(n))\to g(\boldsymbol \gamma_q,\mathbf x_*),\quad n\to\infty.
\end{equation}
Moreover, let
$$
\bbR^{r_3}\ni \hspace{0.5mm} \mathbf x(n): = \big(\langle \p_{q+1}(n),{\mathbf u}_{p-r+q}(n)\rangle a^{h_{q+1}-h_q},\ldots,\langle \p_{r}(n),{\mathbf u}_{p-r+q}(n)\rangle a^{h_{r}-h_q}\big), \quad n \in \bbN
$$
(not to be confused with the minimizer $\mathbf x_*(n)$ of $g_n(\boldsymbol \gamma_q(n),\cdot)$ as in \eqref{e:x_*(n),x_*}). Then, we can express
$$
\lambda_{p-r+q}\Big( \frac{\E\W(a2^j)}{a^{2h_q+1}} \Big)=g_{n}(\boldsymbol \gamma_q(n),\mathbf x(n)) + {\mathbf u}^*_{p-r+q}(n) \frac{\E \W_Z(a2^j)}{a^{2h_q+1}} {\mathbf u}_{p-r+q}(n).
$$
Thus, for all large $n$ and for ${\boldsymbol \gamma}_q(n)$ and $\mathbf v(n)$ as in \eqref{e:gamma-q(n)} and \eqref{e:<p-ell(n),v(n)>}, respectively,
\begin{equation}\label{e:g_n<lambda<g_n}
g_{n}(\boldsymbol \gamma_q(n),{\mathbf{x}}_*(n)) + {\mathbf u}^*_{p-r+q}(n) \frac{\E \W_Z(a2^j)}{a^{2h_q+1}} {\mathbf u}_{p-r+q}(n)
$$
$$
\leq g_{n}(\boldsymbol \gamma_q(n),\mathbf x(n)) +{\mathbf u}^*_{p-r+q}(n) \frac{\E \W_Z(a2^j)}{a^{2h_q+1}} {\mathbf u}_{p-r+q}(n)
$$
$$
={\mathbf u}^*_{p-r+q}(n) \frac{\bbE {\mathbf W}(a2^j)}{a^{2h_q+1}} {\mathbf u}_{p-r+q}(n) = \frac{\lambda_{p-r+q}(\bbE {\mathbf W}(a2^j))}{a^{2h_q+1}}
$$
$$
\leq \mathbf v^*(n) \frac{ \bbE \mathbf{W}(a2^j) }{a^{2h_q + 1}}\mathbf v(n)
= g_{n}(\mathbf v(n);\mathbf x_*)+  \mathbf v^*(n) \frac{\E \W_Z(a2^j)}{a^{2h_q+1}} \mathbf v(n),
\end{equation}
where in the second inequality we used the fact that $\mathbf v(n) \in \text{span}\{{\mathbf u}_{p-r+q}(n),\ldots,{\mathbf u}_{p}(n)\}$. However, note that $\|\E \W_Z(a2^j)/a^{2h_q+1}\| =o(a^{-\varpi})$ under assumption \eqref{e:assumptions_WZ=OP(1)}. Then, by Corollary \ref{c:PWXP^*+R_asymptotics}  with $M_n = \bbE\W_Z(a2^j)/a^{2h_q+1}$,
\begin{equation}\label{e:lambda_p-r+q_rescaled->xi_q}
 \frac{\lambda_{p-r+q}(\bbE {\mathbf W}(a(n)2^j))}{a(n)^{2h_q+1}} \to \xi_q(2^j), \quad n \rightarrow \infty.
\end{equation}
Therefore, in view of \eqref{e:gn_to_g} and \eqref{e:gn_to_g_2}, by taking limits in \eqref{e:g_n<lambda<g_n}, we see that
\begin{equation}\label{e:xi_q(2^j)=g(gamma_q,x*)}
\xi_q(2^j) = g(\boldsymbol \gamma_q,\mathbf x_*).
\end{equation}
Moreover, again since $\|\E \W_Z(a2^j)/a^{2h_q+1}\| =o(a^{-\varpi})$ under assumption \eqref{e:assumptions_WZ=OP(1)}, expressions \eqref{e:g_n<lambda<g_n}, \eqref{e:lambda_p-r+q_rescaled->xi_q} and \eqref{e:xi_q(2^j)=g(gamma_q,x*)} imply that
$$
\Big|\frac{\lambda_{p-r+q}(\bbE {\mathbf W}(a(n)2^j))}{a(n)^{2h_q+1}} - \xi_q(2^j)\Big|
$$
\begin{equation}\label{e:xiq_lambdaq_maxbound}
\leq  \max \Big\{\Big|g_{n}\big(\boldsymbol \gamma_q(n),{\mathbf{x}}_*(n)\big) -g\big(\boldsymbol \gamma_q,\mathbf x_* \big) \Big| ,\Big| g_{n}\big(\mathbf v(n);\mathbf x_* \big)- g\big(\boldsymbol \gamma_q,\mathbf x_* \big)\Big|\Big\} + o(a^{-\varpi}).
\end{equation}
Consider the first and the second terms inside the $\max\{\cdot,\cdot\}$ on the right-hand side of expression \eqref{e:xiq_lambdaq_maxbound}. If
\begin{equation}\label{e:|g_{n}gamma_q,x_*(n))-xi_q(2^j)| = O(a^{-varpi})}
\textnormal{(a)} \quad \Big|g_{n}(\boldsymbol \gamma_q(n),\mathbf{x}_*(n)) -\xi_q(2^j) \Big| = O(a^{-\varpi})
\end{equation}
and
\begin{equation}\label{e:|g_{n}(v(n);x_{*})-xi_q(2^j)|=O(a^(-varpi))}
\textnormal{(b)} \quad \Big| g_{n}(\mathbf v(n);\mathbf{x}_{*}) -\xi_q(2^j)\Big|
 = O(a^{-\varpi}),
\end{equation}
then \eqref{e:|lambdaq(EW)-xiq(2^j)|_bound} holds under condition $(i)$. So, we now establish (a) and (b).

First, we show (a). Let ${\boldsymbol \gamma}_q(n)$ and ${\mathbf x}_*(n)$ be as in \eqref{e:gamma-q(n)} and \eqref{e:x_*(n),x_*}, respectively. For notational simplicity, write
$$
{\mathbf y}(n) = \big(y_1(n),\hdots,y_r(n)\big)^* := \mathbf y_{n}\big(\boldsymbol \gamma_q(n),{\mathbf{x}}_*(n)\big)
$$
\begin{equation}\label{e:y_vec(n)}
= \Big( \underbrace{\langle {\mathbf p}_{1}(n),{\mathbf u}_{p-r+q}(n) \rangle a^{h_1-h_q},\hdots,\langle {\mathbf p}_{r_{1}}(n),{\mathbf u}_{p-r+q}(n) \rangle a^{h_{r_1}-h_q}}_{r_1},\langle {\mathbf p}_{q}(n),{\mathbf u}_{p-r+q}(n) \rangle , \underbrace{{\mathbf x}_{*}(n)}_{r_3}\Big)
\end{equation}
and
\begin{equation}\label{e:y-vec}
{\mathbf y}=\big(\underbrace{0,\ldots,0}_{r_1},\gamma_{qq},\underbrace{{\mathbf x}_{*}}_{r_3}\big)^* =(y_1,\ldots,y_r)^* := \lim_{n\to\infty}\mathbf y(n),
\end{equation}
where $\lim_{n \rightarrow \infty}\langle {\mathbf p}_{q}(n),{\mathbf u}_{p-r+q}(n) \rangle = \gamma_{qq}$ as a consequence of \eqref{e:lim_gamma-q(n)}. Recast
 $$
 g_{n}(\boldsymbol \gamma_q(n),\mathbf{x}_*(n)) -\xi_q(2^j)  = g_{n}(\boldsymbol \gamma_q(n),{\mathbf{x}}_*(n)) -g(\boldsymbol \gamma_q,\mathbf x_*)
 $$
 $$
 =  \mathbf y^*(n)\mathbf{B}_a(2^j) \mathbf y (n) -  \mathbf y^* \B(2^j)  \mathbf y
 $$
\begin{equation}\label{e:xiq-gn_lower}
= (\mathbf y(n)-\mathbf y)^*\mathbf{B}_a(2^j) (\mathbf y(n)-\mathbf y) + 2 \mathbf y^* \B(2^j)  (\mathbf y(n)-\mathbf y) + {\mathbf y}^*\big({\mathbf B}_a(2^j)-{\mathbf B}(2^j)\big){\mathbf y}.
\end{equation}
Therefore, in view of condition \eqref{e:|bfB_a(2^j)-B(2^j)|=O(shrinking)}, if
\begin{equation}\label{e:|y(n)-y|=O(a^{-varpi})}
\| \mathbf y(n)-\mathbf y \|=O(a^{-\varpi}),
\end{equation}
then
$$
\big| g_{n}(\boldsymbol \gamma_q(n),{\mathbf{x}}_*(n)) -g(\boldsymbol \gamma_q,\mathbf x_*)\big| = O(a^{-\varpi}).
$$
Thus, \eqref{e:|g_{n}gamma_q,x_*(n))-xi_q(2^j)| = O(a^{-varpi})} holds, which establishes (a). So, we now show \eqref{e:|y(n)-y|=O(a^{-varpi})}. We establish relation \eqref{e:|y(n)-y|=O(a^{-varpi})} entry-wise for each of the ranges $\ell\in\mathcal{I}_-=\{1,\ldots,q-1\}$, $\ell = q$ and $\ell\in\mathcal{I}_+=\{q+1,\ldots,r\}$.

In fact, for $\ell\in\mathcal{I}_- = \{1,\ldots,q-1\}$, expression \eqref{e:y_vec(n)} shows that
 \begin{equation}\label{e:y(n)-z_-}
 | y_\ell(n)- y_\ell|=| y_\ell(n)|= O\Big(a^{h_\ell-h_q}\Big)= O(a^{-\varpi}).
 \end{equation}

On the other hand, for $\ell=q$, let $P(n)=Q(n)R(n)$ be the $QR$ decomposition of $P(n)$, where $R(n)\in GL(r,\bbR)$, $Q(n)=(\widetilde \p_1(n),\ldots,\widetilde \p_r(n))\in M(p,r,\bbR)$ with orthonormal columns. As in \eqref{e:sum<pi(n),up-r+q(n)>2-o(a-varpi)}, let $\p_{r+1}(n),\ldots,\p_p(n)$ be an orthonormal basis for the nullspace of $P^*(n)$. Observe that, by Lemma \ref{l:p-tilde_upk=o(a-w)},
  \begin{equation}\label{e:1-<p-tilde_k(n),u_p-r+k(n)>^2=O(small)}
  1- \langle \widetilde \p_k(n), {\mathbf u}_{p-r+k}(n)\rangle^2=O(a^{-2\varpi}), \quad k=1,\ldots, r.
  \end{equation}
Consequently, after flipping the sign of ${\mathbf u}_{p-r+q}(n)$ if necessary, $\|Q^*(n){\mathbf u}_{p-r+q}(n)- \mathbf e_q\| = O(a^{-\varpi})$. Recalling \eqref{e:gamma-q(n)} and the condition that $R(n)\to R$ as $n \rightarrow \infty$ {(see \eqref{e:<p1,p2>=c12_2})},
$$
\|{\boldsymbol \gamma}_q(n) - R \mathbf e_q\| = \|R^*(n)Q^*(n){\mathbf u}_{p-r+q}(n) - R \mathbf e_q\|$$
$$\leq \|R(n)- R\| + \|R\| \| Q^*(n){\mathbf u}_{p-r+q}(n)-\mathbf e_q\| = O(a^{-\varpi}),
$$
where in the last equality we make use of condition \eqref{e:<p1,p2>=c12_2}. Hence, ${\boldsymbol \gamma}_q=\lim_{n\to\infty} {\boldsymbol \gamma}_q(n) = R \mathbf e_q$. Keeping in mind expressions \eqref{e:y_vec(n)} and \eqref{e:y-vec}, this implies that
 \begin{equation}\label{e:y(n)-z_0}
|y_q(n)-y_q| = |\langle \p_q(n), {\mathbf u}_{p-r+q}(n)\rangle - \gamma_{qq}| = O(a^{-\varpi}).
\end{equation}
In other words, \eqref{e:|y(n)-y|=O(a^{-varpi})} also holds for $\ell = q$.

Turning to $y_\ell(n)-y_\ell$ for $\ell\in\mathcal{I}_+=\{q+1,\ldots,r\}$, recall that
$$
\mathbf x_*(n) := (x_{*,q+1}(n),\ldots,x_{*,r}(n))^*
$$
is the unique minimizer of $g_n(\boldsymbol \gamma_q(n);\cdot)$ in \eqref{e:def_gn_deterministic}, and that $\mathbf x_*=\lim_{n\to\infty}\mathbf x_*(n)$.  Now, reexpress
\begin{equation}\label{e:B(2^j)=(B_i,ell)}
 {\mathbf B}_a(2^j) =( {\mathbf B}_{i\ell}(n))_{i,\ell=1,2,3}, \quad {\mathbf B}(2^j) =(\mathbf B_{i\ell})_{i,\ell=1,2,3},
\end{equation}
where ${\mathbf B}_{i\ell}(n)$ and $ {\mathbf B}_{i\ell}$ denote blocks of size $r_i\times r_\ell$.
The first order conditions for the minimization of $g_n(\boldsymbol \gamma_q(n);\cdot)$  imply that
\begin{equation}\label{e:B0,B+,firstorder_cond}
\begin{pmatrix}
\mathbf B _{31}(n) & \mathbf B_{32}(n) & \mathbf B_{33}(n)
\end{pmatrix} \begin{pmatrix}
O(a^{-\varpi}) \\
\langle {\mathbf p}_{q}(n),{\mathbf u}_{p-r+q}(n) \rangle \\
\mathbf x_*(n)
\end{pmatrix}= \mathbf 0
\end{equation}
where $O(a^{-\varpi}) \in \bbR^{r_1}$ contains the first $r_1$ terms in $\mathbf y(n)$ (see \eqref{e:y_vec(n)}).  Since $\B_{33}(n)\in\mathcal{S}_{>0}(r_3,\bbR)$ for all large $n$, %
by rearranging \eqref{e:B0,B+,firstorder_cond}  and multiplying by $\B_{33}(n)^{-1}$ on the left we have, for all large $n$,
$$
\mathbf{x}_*(n) = -\B_{33}(n)^{-1}\Big( O(a^{-\varpi}) + \B_{32}(n)\langle {\mathbf p}_{q}(n),{\mathbf u}_{p-r+q}(n) \rangle\Big),  \quad \mathbf{x}_* = -\B_{33}^{-1}\B_{32} \gamma_{qq}.
$$
Together with \eqref{e:y(n)-z_-} and \eqref{e:y(n)-z_0}, as well as assumption \eqref{e:|bfB_a(2^j)-B(2^j)|=O(shrinking)}, this implies that
$$
\|(y_\ell(n)-y_\ell)_{\ell\in\mathcal I_+}\|=\|\mathbf{x}_*(n)-\mathbf x_*\|  \leq  O(a^{-\varpi}) + \| \B_{33}(n)^{-1}\B_{32}(n)\langle {\mathbf p}_{q}(n),{\mathbf u}_{p-r+q}(n)\rangle - \B_{33}^{-1}\B_{32} \gamma_{qq}\| %
$$
$$
\leq O(a^{-\varpi})+ \big\| (\B_{33}(n)^{-1}\B_{32}(n)-\B_{33}^{-1}\B_{32}\big)\langle {\mathbf p}_{q}(n),{\mathbf u}_{p-r+q}(n)\rangle\big\| +  \big\|\B_{33}^{-1}\B_{32}\big(\langle {\mathbf p}_{q}(n),{\mathbf u}_{p-r+q}(n)\rangle-\gamma_{qq}\big)\big\|
$$
$$
\leq O(a^{-\varpi})+ C \Big(\big\| (\B_{33}(n)^{-1}\B_{32}(n)-\B_{33}^{-1}\B_{32}\big)\big\| +  \big|\langle {\mathbf p}_{q}(n),{\mathbf u}_{p-r+q}(n)\rangle-\gamma_{qq}\big|\Big) = O(a^{-\varpi}).
$$
Hence, \eqref{e:|y(n)-y|=O(a^{-varpi})} also holds for $\ell \in {\mathcal I}_+$. Thus, \eqref{e:|y(n)-y|=O(a^{-varpi})} holds for $\ell = 1,\hdots,r$. This establishes \eqref{e:|g_{n}gamma_q,x_*(n))-xi_q(2^j)| = O(a^{-varpi})} and, hence, (a).

Now, we turn to (b). Consider the sequence of unit vectors $\mathbf v(n)$ given by \eqref{e:<p,w>=infinitesimal_2} in Lemma \ref{l:<p,w>=infinitesimal} for the deterministic matrix $\E \W(a2^j)$.  For a sequence of scalars $\{c_{q}(n), c_{q+1}(n),\ldots,c_r(n)\}$ satisfying $\sum_{i=q}^rc_i^2(n) = 1$, we may write $\mathbf v(n) =  \sum_{i=q}^rc_{i}(n){\mathbf u}_{p-r+i}(n)$. By expression \eqref{e:<p,w>=infinitesimal_2} of Lemma \ref{l:<p,w>=infinitesimal},
$$
1- c_q(n)^2 = \sum_{i=q+1}^r c_i(n)^2 = O(a^{-2\varpi}).
$$
Moreover, by flipping the sign of $\mathbf u_{p-r+q}(n)$ if necessary, we may suppose $c_q(n)\to 1$ as $n \rightarrow \infty$. Hence, $|1-c_q(n)| = O(a^{-2\varpi})$. Therefore,
\begin{equation}\label{e:(pl,v)-(pl,u)}
|\langle \p_q(n),\mathbf v(n)\rangle -\langle \p_q(n),{\mathbf u}_{p-r+q}(n)\rangle|
$$
$$
= \Big| (c_q(n)-1)\langle \p_q(n),{\mathbf u}_{p-r+q}(n)\rangle +  \sum_{i\in\mathcal{I}_+}c_{i}(n)\langle \p_q(n),{\mathbf u}_{p-r+i}(n)\rangle \Big| $$
$$
\leq |1-c_q(n)|\cdot |\langle \p_q(n),{\mathbf u}_{p-r+q}(n)\rangle| +  O(a^{-\varpi}) =  O(a^{-\varpi}).
\end{equation}
Thus, \eqref{e:y(n)-z_0} and \eqref{e:(pl,v)-(pl,u)} show that
\begin{equation}\label{e:<pl,v>-gamma_lq}
\big|\langle \p_q(n),\mathbf v (n)\rangle - \gamma_{qq}\big| \leq \big|\langle \p_q(n),\mathbf v(n)\rangle -\langle \p_q(n),{\mathbf u}_{p-r+q}(n)\rangle\big| + \big|\langle \p_q(n),{\mathbf u}_{p-r+q}(n)\rangle - \gamma_{qq}\big|
$$
$$
= O(a^{-\varpi}) + O(a^{-\varpi})  = O(a^{-\varpi}).
\end{equation}
Now, define
$$
\widetilde{\mathbf y}(n)=a^{\mathbf{h}-h_q I}P^*(n) \mathbf v(n)=\mathbf y_n\big(P^*(n)\mathbf v(n),\mathbf x_*\big).
$$
Thus, entry-wise, we can express
$$
\widetilde{\mathbf y}(n)_{\ell} =
\begin{cases}
\langle \p_\ell(n),{\mathbf v}(n)\rangle a^{h_\ell-h_q}=O(a^{-\varpi}), & \ell \in \mathcal{I}_-;\\
\langle \p_\ell(n),{\mathbf v}(n)\rangle, & \ell=q;\\
x_{*,\ell}, & \ell \in \mathcal{I}_+.\\
\end{cases}
$$
From \eqref{e:<pl,v>-gamma_lq} and for ${\mathbf y}$ as in \eqref{e:y-vec}, we obtain
\begin{equation}\label{e:|y-tilde(n)-y|^2}
\|\widetilde{\mathbf y}(n)-\mathbf y\|^2= \big( \langle \p_q(n),\mathbf v(n)\rangle - \gamma_{qq} \big)^2 + O(a^{-2\varpi})=O(a^{-2\varpi}).
\end{equation}
Hence,
$$
\Big| g_{n}(\mathbf v(n);\mathbf{x}_{*}) -\xi_q(2^j)\Big| = \Big | \widetilde {\mathbf y}^*(n) \mathbf{B}_a(2^j)\widetilde{ \mathbf y}(n) -  {\mathbf y}^* \mathbf{B}(2^j){ \mathbf y}\Big|
$$
$$
= \Big|(\widetilde{\mathbf y}(n)-\mathbf y)^*\mathbf{B}_a(2^j) (\widetilde{\mathbf y}(n)-\mathbf y) + 2 \mathbf y^* \B(2^j)  (\widetilde{\mathbf y}(n)-\mathbf y) + {\mathbf y}^*\big({\mathbf B}_a(2^j)-{\mathbf B}(2^j)\big){\mathbf y}\Big|
$$
$$
\leq C\|{\mathbf B}_a(2^j)-{\mathbf B}(2^j)\| + C' \|\widetilde {\mathbf y}(n)-\mathbf y\|  = O(a^{-\varpi}),
$$
where the last equality is a consequence of condition \eqref{e:|bfB_a(2^j)-B(2^j)|=O(shrinking)} and of relation \eqref{e:|y-tilde(n)-y|^2}.
This establishes \eqref{e:|g_{n}(v(n);x_{*})-xi_q(2^j)|=O(a^(-varpi))} and, hence, (b). Thus, as anticipated, \eqref{e:|lambdaq(EW)-xiq(2^j)|_bound} holds under condition $(i)$.

We now turn to \eqref{e:|lambdaq(EW)-xiq(2^j)|_bound} for the case $(ii)$. Let
$$
\W_{P X}(a2^j) = P \W_{X}(a2^j) P^*.
$$
Fix any $q\in\{1,\ldots,r\}$. By Weyl's inequality (e.g., Vershynin \cite{vershynin:2018}, Theorem 4.5.3),
$$
\Big|\frac{\lambda_{p-r+q}\big(\bbE\W(a2^j)\big)}{a^{2h_q+1}} - \frac{\lambda_{p-r+q}\big(\E\W_{P X}(a2^j)\big)}{a^{2h_q+1}}\Big|
$$
$$
\leq \|\bbE\W(a2^j)/a^{2h_q+1}-\bbE\W_{P X}(a2^j)/a^{2h_q+1}\| = O(a^{-\varpi}).
$$
So, it suffices to show \eqref{e:|lambdaq(EW)-xiq(2^j)|_bound} for the matrix $\bbE \W_{PX}(a2^j)/a^{2h_q+1}=P \B_a(2^j)P^*$ in place of $\bbE \W(a2^j)$.

Let $\widetilde {\mathbf u}_i(n)$, $i=1,\dots,p$, be an orthonormal basis of eigenvectors of the (deterministic) matrix $P\B_a(2^j)P^*$, where the associated ordering of eigenvalues is arbitrary in the case of ties. Observe that $P^*P\B_a(2^j)P^*\widetilde {\mathbf u}_i(n) = \lambda_i(P\B_a(2^j)P^*) \cdot P^*\widetilde {\mathbf u}_i(n)$ for $i=1,\ldots,p$. In particular,
\begin{equation}\label{e:lambda(P*PB)=lambda(P*BP)}
\lambda_{p-r+q}(P\B_a(2^j)P^*) = \lambda_{q}(P^*P\B_a(2^j)), \quad q = 1,\hdots,r
\end{equation}
(where $P^*\widetilde {\mathbf u}_{p-r+q}(n)$ is the associated eigenvector of the non-symmetric matrix $P^*P\B_a(2^j) \in M(r,\bbR)$). By a similar argument, and considering again the $QR$ decomposition $P \equiv P(n)=Q(n)R(n)$,
\begin{equation}\label{e:lambda(R*(n)R(n)B)=lambda(R*(n)BR(n))}
\lambda_{q}\big(R(n)\B_a(2^j)R^*(n)\big) = \lambda_{q}\big(R^*(n)R(n)\B_a(2^j)\big), \quad q = 1,\hdots,r.
\end{equation}
As a consequence of \eqref{e:lambda(P*PB)=lambda(P*BP)}, \eqref{e:lambda(R*(n)R(n)B)=lambda(R*(n)BR(n))} and of the fact that $Q^*(n)Q(n) = I_r$,
$$
\lambda_{q}\big(P^*(n)P(n)\B_a(2^j)\big) = \lambda_{q}\big(R(n)\B_a(2^j)R^*(n)\big), \quad q = 1,\hdots,r.
$$
So, by taking limits in \eqref{e:lambda(P*PB)=lambda(P*BP)}, Lemma \ref{c:PWXP^*+R_asymptotics} with $M_n\equiv \mathbf 0$ implies that, for $q = 1,\hdots,r$, $\lambda_{q}(P^*(n)P(n)\B_a(2^j))\to \xi_q(2^j)$ as $n \rightarrow \infty$.  Moreover, by a similar argument to the one leading to \eqref{e:lambda(R*(n)R(n)B)=lambda(R*(n)BR(n))},
\begin{equation}\label{e:xi_q(2^j)=lambda-q(AB(2^j))}
\xi_q(2^j) = \lambda_{q}(A\B(2^j))=\lambda_{q}(R\B(2^j)R^*)
\end{equation}
due to assumptions \eqref{e:|bfB_a(2^j)-B(2^j)|=O(shrinking)} and \eqref{e:<p1,p2>=c12_2}. Therefore, again by Weyl's inequality,
$$
\big |\lambda_q\big(P^*(n)P(n) \B_a(2^j)\big) - \xi_q(2^j)\big|= \big|\lambda_q\big(R^*(n)R(n) \B_a(2^j)\big) - \lambda_q(R^*R \B(2^j))\big|
$$
$$
= \big|\lambda_q\big(R(n)\B_a(2^j)R^*(n)\big) - \lambda_q\big(R \B(2^j)R^*\big)\big| \leq \big\| R(n)\B_a(2^j)R^*(n) - R \B(2^j)R^*\big\|
$$
$$
\leq \| (R(n)-R)\B_a(2^j)(R(n)-R)^*\| + \|(R(n)-R)\B_a(2^j)R^*\|
$$
\begin{equation}\label{e:|P^*PB_a(2^j)-AB(2^j)|=O(a^(-varpi))}
+ \|R\B_a(2^j)(R(n)-R)^*\| + \|R(\B_a(2^j)-\B(2^j))R^* \| = O(a^{-\varpi}),
\end{equation}
where the last equality is a consequence of conditions \eqref{e:|bfB_a(2^j)-B(2^j)|=O(shrinking)} and \eqref{e:<p1,p2>=c12_2}.
This establishes \eqref{e:|lambdaq(EW)-xiq(2^j)|_bound} in the case $(ii)$. $\Box$
\end{proof}\\

We are now in a position to prove Theorem \ref{t:h-hat_asympt_normality} (cf.\ Corollary 2, $(ii)$, in Abry and Didier \cite{abry:didier:2018:n-variate}).\\

\noindent {\sc Proof of Theorem \ref{t:h-hat_asympt_normality}}: For a fixed $q \in\{1,\hdots,n\}$, the left-hand side of \eqref{e:h^q_asymptotically_normal} can be recast in the form
$$
\sqrt{\frac{n}{a}} \sum^{j_2}_{j = j_1} \frac{w_j}{2} \Big(\log_2 \lambda_{p-r+q}\big({\mathbf W}(a2^j)\big) - \log_2 \lambda_{p-r+q}\big(\bbE {\mathbf W}(a 2^j)\big)\Big)
$$
\begin{equation}\label{e:h^q-hq_three_terms}
+ \sqrt{\frac{n}{a}} \sum^{j_2}_{j=j_1} \frac{w_j}{2} \Big(\log_2 \lambda_{p-r+q}\big(\bbE {\mathbf W}(a 2^j)\big) - \log_2 \xi_q(a2^j)\Big)
+ \sqrt{\frac{n}{a}} \Big( \sum^{j_2}_{j = j_1} \frac{w_j}{2}  \log_2 \xi_q(a2^j)  - h_q \Big).
\end{equation}
Note that by \eqref{e:xi-i0_scales} in Theorem \ref{t:lim_n_a*lambda/a^(2h+1)}, $\xi_q(a2^j) = a^{2h_q+1} \xi_q(2^{j})$. Therefore, by property \eqref{e:sum_wj=0,sum_jwj=1}, the third term in the sum \eqref{e:h^q-hq_three_terms} is zero. By Proposition \ref{p:|lambdaq(EW)-xiq(2^j)|_bound_2}, the second term in the sum \eqref{e:h^q-hq_three_terms} is bounded by
$$
\sqrt{\frac{n}{a}} \sum^{j_2}_{j=j_1} \frac{|w_j |}{2} \frac{C}{a^{ \varpi}}
\leq C'\sqrt{\frac{n}{a^{ 1 + 2 \varpi}}} \rightarrow 0, \quad n \rightarrow \infty,
$$
where the limit is a consequence of condition \eqref{e:p(n),a(n)_conditions}. Therefore, we can rewrite the left-hand side of \eqref{e:h^q_asymptotically_normal} as
$$
\sum^{j_2}_{j = j_1} \frac{2^{j/2 - 1}w_j}{ \log 2} \sqrt{n_{a,j}} \Big(\log \lambda_{p-r+q}\big({\mathbf W}(a2^j)\big) - \log \lambda_{p-r+q}\big(\bbE {\mathbf W}(a2^j)\big)\Big) + o(1),
$$
and the weak limit \eqref{e:h^q_asymptotically_normal} follows from Theorem \ref{t:asympt_normality_lambdap-r+q}. In the limiting variance in \eqref{e:h^q_asymptotically_normal}, the weight matrix $M \in M(r,mr,\bbR)$ is given by
\begin{equation}\label{e:weight_matrix_M}
M = \Big( \frac{2^{j_1/2}w_{j_1}}{ \log 2} I_r ; \hspace{1mm}\frac{2^{j_1+1/2}w_{j_1+1}}{ \log 2} I_r; \hspace{1mm} \hdots \hspace{1mm}; \hspace{1mm} \frac{2^{j_2/2}w_{j_2}}{ \log 2} I_r\Big),
\end{equation}
where $I_r \in M(r,\bbR)$ is an identity matrix and $m=j_2-j_1+1$. $\Box$\\

\noindent {\sc Proof of Theorem \ref{t:r-hat->r}}: Theorem \ref{t:lim_n_a*lambda/a^(2h+1)} and the scaling relation \eqref{e:xi-i0_scales} show that, as $n \rightarrow \infty$,
$$
\frac{\log_2 \lambda_{p-r+q} {\mathbf W}(a2^j)}{\log_2 (a2^j)} \stackrel{\bbP}{\rightarrow} {2 h_{q}+1 >0}, \quad q = 1,\hdots,r.
$$
In addition, by relation \eqref{e:lim_lambda_p-r(W)} in Theorem \ref{t:lim_n_a*lambda/a^(2h+1)}, $\frac{\log_2 \lambda_{p-r} {\mathbf W}(a2^j)}{\log_2 (a2^j)} \stackrel{\bbP}{\rightarrow} 0$ for every $j$.  Thus, for $q = 1,\hdots,r$ and $\kappa > 0$ as in \eqref{e:def_rhat},
\begin{equation}\label{e:Delta_i,i=p-r+1,...,p}
\Delta_{p-r+q}(j_1,j_2) \stackrel{\bbP}{\rightarrow} \sum_{j=j_1}^{j_2} v_j (2h_q+1) = 2h_{q}+1 > \kappa.
\end{equation}
Likewise,
\begin{equation}\label{e:Delta_i,i=1,...,p-r}
\max_{i=1,\ldots,p-r}|\Delta_i(j_1,j_2)| \leq \sum^{j_2}_{j=j_1}|v_j| \cdot \Big|\frac{\log_2 \lambda_{p-r} {\mathbf W}(a2^j)}{\log_2 (a2^j)} \Big| \stackrel{\bbP}{\rightarrow}0.
\end{equation}
Expression \eqref{e:r-hat-->r} is now a consequence of \eqref{e:Delta_i,i=p-r+1,...,p} and \eqref{e:Delta_i,i=1,...,p-r}. $\Box$\\

The following lemma is used in the proof of Proposition \ref{p:|lambdaq(EW)-xiq(2^j)|_bound_2}.
\begin{lemma}\label{l:p-tilde_upk=o(a-w)}
Suppose the assumptions of Theorem \ref{t:h-hat_asympt_normality} hold, as well as condition $(i)$ in the same theorem. Let $P(n)=Q(n)R(n)$ be the $QR$ decomposition of $P(n)$, where $R(n)\in GL(r,\bbR)$ is upper-triangular, and $Q(n)=(\widetilde \p_1(n),\ldots,\widetilde \p_r(n))\in M(p,r,\bbR)$ has orthonormal columns. Also let
\begin{equation}\label{e:sum<pi(n),up-r+q(n)>2-o(a-varpi)}
\p_{r+1}(n),\ldots,\p_p(n)
\end{equation}
be an orthonormal basis for the nullspace of $P^*(n)$ that is orthogonal to $\textnormal{span}\{\p_{1}(n),\ldots,\p_r(n)\}$. Then, expression \eqref{e:1-<p-tilde_k(n),u_p-r+k(n)>^2=O(small)} holds.
\end{lemma}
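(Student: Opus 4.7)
The plan is to treat $\mathbb E\mathbf W(a2^j)$ as a rank-$r$ signal plus a bounded perturbation and then transfer alignment from the signal's eigenvectors to those of the full matrix via Davis--Kahan. With the convention $P\equiv P(n)P_H$ from \eqref{e:P(n)PH_equiv_P}, the identity $a^H P_H = P_H a^\Lambda$ for $\Lambda := \operatorname{diag}(h_1,\ldots,h_r)$, and the $QR$ decomposition $P = QR$ from $(A5)$, a direct computation gives
\[
\mathbb E\mathbf W(a2^j) = QKQ^* + E, \qquad K := R D_a\,\mathbf B_a(2^j)\,D_a R^*, \qquad D_a := \operatorname{diag}\bigl(a^{h_1+1/2},\ldots,a^{h_r+1/2}\bigr),
\]
where $E = \mathbb E\mathbf W_Z(a2^j)$ satisfies $\|E\| = O(1)$ by $(A2)$. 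The nonzero eigenvectors of $QKQ^*$ are precisely $Q\mathbf v_k$, where $\mathbf v_k$ are the eigenvectors of $K\in\mathcal S_{>0}(r,\mathbb R)$.

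First I would establish that, under the strict ordering $h_1<\cdots<h_r$ of case $(i)$, the unit eigenvectors $\mathbf v_k$ of $K$ (indexed by nondecreasing eigenvalue, with signs chosen so that $(\mathbf v_k)_k>0$) satisfy $\|\mathbf v_k - \mathbf e_k\| = O(a^{-\varpi})$. The idea is that, after the rescaling $D_a^{-1}K D_a^{-1}$, the matrix $K$ becomes essentially diagonal: each off-diagonal entry is dominated by the corresponding diagonal entry by a factor of at least $O(a^{-(h_{j+1}-h_j)})=O(a^{-\varpi})$. Plugging the ansatz $\mathbf v_k = \mathbf e_k + \sum_{j\neq k}\alpha_{jk}\mathbf e_j$ into $K\mathbf v_k = \mu_k\mathbf v_k$ with $\mu_k\sim c_{kk}a^{2h_k+1}$ yields $\alpha_{jk}=O(a^{-|h_j-h_k|})$, whose slowest rate is $O(a^{-\varpi})$ by \eqref{e:varpi_parameter}. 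The rates $\|R(n)-R\| = O(a^{-\varpi})$ from $(A5)$ and $\|\mathbf B_a(2^j)-\mathbf B(2^j)\| = O(a^{-\beta})$ with $\beta\ge\varpi$ from \eqref{e:|bfB_a(2^j)-B(2^j)|=O(shrinking)} ensure that all error terms combine into the single $O(a^{-\varpi})$ bound.

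Next, I would apply a Davis--Kahan $\sin\Theta$ bound to $\mathbb E\mathbf W(a2^j) = QKQ^*+E$. In case $(i)$, the eigenvalues $\mu_k$ of $K$ satisfy $\mu_k-\mu_{k-1}\gtrsim a^{2h_k+1}$ and $\mu_1\gtrsim a^{2h_1+1}$, while the remaining $p-r$ eigenvalues of $\mathbb E\mathbf W(a2^j)$ stay $O(1)$ (by Theorem \ref{t:lim_n_a*lambda/a^(2h+1)} combined with Weyl's inequality). Thus the spectral gap around the $k$-th largest eigenvalue of $\mathbb E\mathbf W(a2^j)$ is $\gtrsim a^{2h_k+1}$, and since $\|E\|=O(1)$ the Davis--Kahan inequality gives
\[
\bigl\|\mathbf u_{p-r+k}(n) - Q(n)\mathbf v_k\bigr\| = O\bigl(a^{-(2h_k+1)}\bigr)
\]
after an appropriate sign choice. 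The calibration $2\varpi\le h_1+1/2<2h_k+1$ from \eqref{e:varpi_parameter} makes this $O(a^{-2\varpi})$, and in particular $O(a^{-\varpi})$.

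Combining the two steps and using $\widetilde{\mathbf p}_k(n)=Q(n)\mathbf e_k$ gives $\|\mathbf u_{p-r+k}(n)-\widetilde{\mathbf p}_k(n)\|=O(a^{-\varpi})$. The unit-vector identity $\|x-y\|^2=2(1-\langle x,y\rangle)$ then yields $1-\langle\widetilde{\mathbf p}_k(n),\mathbf u_{p-r+k}(n)\rangle=O(a^{-2\varpi})$, and hence $1-\langle\widetilde{\mathbf p}_k(n),\mathbf u_{p-r+k}(n)\rangle^2=O(a^{-2\varpi})$ as claimed. The main obstacle is the first step: because $K$ is extremely ill-conditioned (condition number $\asymp a^{2(h_r-h_1)}$), the perturbative identification of its eigenvectors with the canonical basis requires carefully tracking the leading contributions to each $K_{ij}$ together with the separate convergence rates coming from $(A5)$ and \eqref{e:|bfB_a(2^j)-B(2^j)|=O(shrinking)}. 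The definition of $\varpi$ in \eqref{e:varpi_parameter} is precisely tailored so that all of these contributions combine into the single $O(a^{-\varpi})$ bound.
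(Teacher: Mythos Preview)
Your Step 2 fails. The eigenvectors $\mathbf v_k$ of $K=RD_a\mathbf B_a D_aR^*$ are \emph{not} close to $\mathbf e_k$ when $R$ has nonzero off-diagonal entries. Take $r=2$, $\mathbf B_a=I$ and $R=\bigl(\begin{smallmatrix}1&c\\0&s\end{smallmatrix}\bigr)$ with $c\neq 0$, $s^2=1-c^2$; then
\[
K=RD_a^2R^*=\begin{pmatrix} a^{2h_1+1}+c^2a^{2h_2+1} & cs\,a^{2h_2+1}\\ cs\,a^{2h_2+1} & s^2a^{2h_2+1}\end{pmatrix},
\]
so $a^{-(2h_2+1)}K\to\bigl(\begin{smallmatrix}c\\s\end{smallmatrix}\bigr)(c\ \ s)$ and the top unit eigenvector tends to $(c,s)^*\neq\mathbf e_2$. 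The point is that $R$ is \emph{upper} triangular while the diagonal entries of $D_a$ are \emph{increasing}: $(RD_a)_{1j}=R_{1j}a^{h_j+1/2}$, so every row of $RD_a$ carries a term of order $a^{h_r+1/2}$ and every entry of $K$ ends up of order $a^{2h_r+1}$. Your suggested rescaling does not salvage this, since $(D_a^{-1}RD_a)_{ij}=R_{ij}a^{h_j-h_i}$ blows up for $i<j$; the matrix $D_a^{-1}KD_a^{-1}$ is not close to diagonal. Your Davis--Kahan step is correct and indeed yields $\mathbf u_{p-r+k}(n)\approx Q(n)\mathbf v_k$; what breaks is the identification $Q(n)\mathbf v_k\approx Q(n)\mathbf e_k=\tilde{\mathbf p}_k(n)$.

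The paper's argument is organized very differently and never attempts to diagonalize $K$. It works in $\bbR^p$ and inducts on $k$, feeding in two inputs from Abry et al.\ (2022): Lemma~B.3 (Lemma~\ref{l:|<p3,uq(n)>|a(n)^{h_3-h_1}=O(1)} here), which gives $\langle\mathbf p_i(n),\mathbf u_{p-r+\ell}(n)\rangle=O(a^{-(h_i-h_\ell)})$ for $i>\ell$ and is used to control the projections of $\mathbf u_{p-r+\ell}(n)$ onto the \emph{later} orthonormal columns $\tilde{\mathbf p}_i$, $i>\ell$; and Lemma~B.4 (Lemma~\ref{l:sum<pi(n),up-r+q(n)>2-o(a-varpi)_first} here), which bounds the nullspace part $\sum_{i>r}\langle\mathbf p_i(n),\mathbf u_{p-r+\ell}(n)\rangle^2=O(a^{-(h_\ell+1/2)})$. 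The inductive step then uses the already-established alignment for $k'\le k$ to handle the projections of $\mathbf u_{p-r+(k+1)}(n)$ onto the \emph{earlier} $\tilde{\mathbf p}_i$, $i\le k$. If you want to push your approach through, you would need a much finer analysis of $\mathbf v_k$ that accounts for the interaction between the upper-triangular $R$ and the growing $D_a$---effectively reproducing, in $r\times r$ coordinates, the hierarchy of inner-product bounds that Lemmas~\ref{l:|<p3,uq(n)>|a(n)^{h_3-h_1}=O(1)} and~\ref{l:sum<pi(n),up-r+q(n)>2-o(a-varpi)_first} supply in $\bbR^p$.
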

\begin{proof}
Note that%
\begin{equation}\label{e:span_ptilde=span_p_2}
\textnormal{span}\{\widetilde{{\mathbf p}}_{  1}(n), \hdots, \widetilde{{\mathbf p}}_{  \ell}(n)\}= \textnormal{span}\{{\mathbf p}_{  1}(n), \hdots, {\mathbf p}_{\ell}(n)\}, \quad \ell = 1,\hdots,r.
\end{equation}
Moreover, for ${\mathbf p}_{\ell}(n)$, $\ell=r+1, \hdots,p$, as in \eqref{e:sum<pi(n),up-r+q(n)>2-o(a-varpi)},
$$
\textnormal{span}\{\widetilde{{\mathbf p}}_{1}(n), \hdots, \widetilde{{\mathbf p}}_{r}(n)\} \perp  \textnormal{span}\{{\mathbf p}_{r+1}(n),\hdots,{\mathbf p}_{p}(n)\}.
$$
Note that expression \eqref{e:supR'max(angles*powerlaws)=O_P(1)} of Lemma \ref{l:|<p3,uq(n)>|a(n)^{h_3-h_1}=O(1)} (with $\widetilde \W(a2^j) = \E\W(a2^j)$) shows that for each fixed  $i\in\{2,\ldots,r\}$,
\begin{equation}\label{e:<p_i,u_p-r+k>}
\langle  \p_i(n),\mathbf u_{p-r+\ell}(n)\rangle^2 = O(a^{-2(h_i-h_\ell)})=O(a^{-2\varpi}), \quad \ell=1,\ldots,i-1.
\end{equation}
Hence, from \eqref{e:span_ptilde=span_p_2},
\begin{equation}\label{e:double_sum_lemma_C1}
\sum_{i=2}^{r}\sum_{\ell=1}^{i-1}\langle  \widetilde \p_i(n),\mathbf u_{p-r+\ell}(n)\rangle^2 = O\Big(\sum_{i=2}^{r}\sum_{\ell=1}^{i-1}\langle  \p_i(n),\mathbf u_{p-r+\ell}(n)\rangle^2\Big) = O(a^{-2\varpi}).
\end{equation}
We now proceed inductively to show \eqref{e:1-<p-tilde_k(n),u_p-r+k(n)>^2=O(small)}.   Turning to the case $k=1$, first note that, as a consequence of \eqref{e:double_sum_lemma_C1}, $\sum_{i=2}^r\langle \widetilde \p_i(n),\mathbf u_{p-r+1}(n)\rangle^2= O(a^{-2\varpi}).$
In addition, Lemma \ref{l:sum<pi(n),up-r+q(n)>2-o(a-varpi)_first} shows that $\sum_{i=r+1}^p\langle \p_i(n), \mathbf u_{p-r+1}(n)\rangle^2= O(a^{-(h_1+\frac{1}{2})})=O(a^{-2\varpi})$, implying
$$
1- \langle \widetilde \p_1(n), \mathbf u_{p-r+1}(n)\rangle^2 = \sum_{i=2}^r\langle \widetilde \p_i(n),\mathbf u_{p-r+1}(n)\rangle^2 + \sum_{i=r+1}^p\langle \p_i(n), \mathbf u_{p-r+1}(n)\rangle^2
$$
$$
= O(a^{-2\varpi}) +O(a^{-2\varpi}) =  O(a^{-2\varpi}),
$$
i.e., \eqref{e:1-<p-tilde_k(n),u_p-r+k(n)>^2=O(small)} holds for $k=1$. We now proceed by induction on $k$. It suffices to consider the range $2 \leq k \leq r$. So, suppose we have $1- \langle \widetilde \p_i(n), \mathbf u_{p-r+i}(n)\rangle^2= O(a^{-2\varpi})$, $i=1,\ldots,k$. Therefore, by decomposing $\widetilde \p_i(n)$ in the eigenvector basis,
$$
1- \langle \widetilde \p_i(n), \mathbf u_{p-r+i}(n)\rangle^2=\sum_{\ell\neq p-r+i} \langle \widetilde \p_i(n), \mathbf u_{\ell}(n)\rangle^2 = O(a^{-2\varpi}), \quad i=1,\ldots,k.
$$
In particular,
\begin{equation}\label{e:inner_p-tilde_i(n),u_p-r+(k+1)(n)=small}
\langle \widetilde \p_i(n),\mathbf u_{p-r+(k+1)}(n)\rangle^2 = O(a^{-2\varpi}), \quad i=1,\ldots, k.
\end{equation}
However, Lemma \ref{l:sum<pi(n),up-r+q(n)>2-o(a-varpi)_first} again shows that
\begin{equation}\label{e:sum_range_i=r+1_p_inner_p_i(n),u_p-r+(k+1)(n)^2}
\sum_{i=r+1}^p\langle \p_i(n), \mathbf u_{p-r+(k+1)}(n)\rangle^2=o(a^{-2\varpi}).
\end{equation}
Relations \eqref{e:inner_p-tilde_i(n),u_p-r+(k+1)(n)=small} and \eqref{e:sum_range_i=r+1_p_inner_p_i(n),u_p-r+(k+1)(n)^2} imply that
$$
1- \langle \widetilde \p_{k+1}(n), \mathbf u_{p-r+(k+1)}(n)\rangle^2 $$$$=\sum_{i=1}^k\langle \widetilde \p_{i}(n), \mathbf u_{p-r+(k+1)}(n)\rangle^2 +\sum_{i=k+2}^r\langle \widetilde \p_{i}(n), \mathbf u_{p-r+(k+1)}(n)\rangle^2 + \sum_{i=r+1}^p\langle \p_i(n), \mathbf u_{p-r+(k+1)}(n)\rangle^2
$$
$$
= O(a^{-2\varpi}) + O(a^{-2\varpi})  + o(a^{-2\varpi}) =O(a^{-2\varpi}),
$$
where we used that $\sum_{i=k+2}^r\langle \widetilde \p_{i}(n), \mathbf u_{p-r+(k+1)}(n)\rangle^2 = O\big( \sum_{i=k+2}^r\langle  \p_{i}(n), \mathbf u_{p-r+(k+1)}(n)\rangle^2\big) = O(a^{-2\varpi})$ by expressions \eqref{e:span_ptilde=span_p_2} and \eqref{e:<p_i,u_p-r+k>}. Thus,   $1- \langle \widetilde \p_k(n), {\mathbf u}_{p-r+k}(n)\rangle^2=O(a^{-2\varpi}), \quad k=1,\ldots, r$; i.e., \eqref{e:1-<p-tilde_k(n),u_p-r+k(n)>^2=O(small)} holds. $\Box$\\
\end{proof}

\section{Auxiliary lemmas}

In this section, for the reader's convenience, we recap the statements of some useful lemmas established in Abry et al.\ \cite{abry:boniece:didier:wendt:2022:wavelet_eigenanalysis}. For simplicity, we adjust the statements to the particular case involving deterministic matrices that is relevant for establishing the statements in Section \ref{s:proof_main_results}. In this section, we also use the notation \eqref{e:def_indexsets}--\eqref{e:P(n)PH_equiv_P}, and write $a$ instead of $a(n)$ whenever convenient.

\vspace{0.2cm}

\begin{lemma}\label{l:|<p3,uq(n)>|a(n)^{h_3-h_1}=O(1)} (Abry et al.\ \cite{abry:boniece:didier:wendt:2022:wavelet_eigenanalysis},  Lemma B.3)
Fix $q \in\{1,\hdots,r\}$ and consider the index sets $\mathcal{I}_-,\mathcal{I}_0,$ and $\mathcal{I}_+$ as in \eqref{e:def_indexsets}. Suppose $\mathcal{I}_+ \neq \emptyset$, and let
\begin{equation}\label{e:def_W-tilde}
\widetilde{\W}(a(n)2^j) =P(n)a(n)^{{\mathbf h + \frac{1}{2}I}}\hspace{0.25mm} \B_a(2^j)\hspace{0.25mm} a(n)^{{\mathbf h + \frac{1}{2}I}} P^*(n) + M_n,
\end{equation}
where
\begin{equation}\label{e:def_Mn}
M_n=a(n)^{2h_q+1}\big(\frac{O(1)}{a(n)^{2h_q+1}}+
\frac{P(n)a(n)^{{\mathbf h}}O(1)}{a(n)^{2h_q+1/2}}+\frac{O(1)^*a(n)^{{\mathbf h}}P^*(n)}{a(n)^{2h_q+1/2}}\big).
\end{equation}
In \eqref{e:def_Mn}, the terms $O(1)$ denote any sequence of matrices of appropriate dimension whose norms are bounded. Also, let ${\boldsymbol {\mathfrak u}}_{p-r+q}(n)$ be a unit eigenvector associated with the $(p-r+q)$--th eigenvalue of $\widetilde{\mathbf W}(a(n)2^j)$. Then,
\begin{equation}\label{e:supR'max(angles*powerlaws)=O_P(1)}
\max_{\ell\in \mathcal{I}_+}\{ |\langle {\mathbf p}_{  \ell}(n),{\boldsymbol {\mathfrak u}}_{p-r+q}(n)\rangle| \hspace{0.5mm}a(n)^{h_{\ell}-h_q}\} = O(1), \quad n \rightarrow \infty.
\end{equation}
\end{lemma}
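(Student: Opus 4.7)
The plan is to combine a Rayleigh-quotient lower bound coming from the positive definiteness of $\B_a(2^j)$ with an upper bound $\lambda_{p-r+q}(\widetilde{\W}(a(n)2^j)) = O(a(n)^{2h_q+1})$ on the eigenvalue itself. Since $\B(2^j) \in \mathcal{S}_{>0}(r,\bbR)$ by \eqref{e:B(2^j)_full_rank} and $\B_a(2^j) \to \B(2^j)$ by \eqref{e:|bfB_a(2^j)-B(2^j)|=O(shrinking)}, there exists $c_0 > 0$ with $\lambda_{\min}(\B_a(2^j)) \geq c_0$ for all large $n$. Writing $w_\ell := \langle {\mathbf p}_\ell(n), \boldsymbol{\mathfrak u}_{p-r+q}(n)\rangle \, a^{h_\ell + 1/2}$, the quadratic form of the principal part satisfies
$$
\boldsymbol{\mathfrak u}_{p-r+q}^* P(n) a^{\mathbf h + I/2} \B_a(2^j) a^{\mathbf h + I/2} P^*(n) \boldsymbol{\mathfrak u}_{p-r+q} \geq c_0 \sum_{\ell=1}^{r} w_\ell^2,
$$
providing a concrete lower bound for the Rayleigh quotient of $\widetilde{\W}$ (modulo the $M_n$ contribution).

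The second step controls $M_n$. Expanding \eqref{e:def_Mn} yields $M_n = O(1) + P(n)a^{\mathbf h + I/2} O(1) + O(1)^* a^{\mathbf h + I/2} P^*(n)$, and the cross terms give Rayleigh contributions bounded by $C \sum_\ell |w_\ell|$, which by Young's inequality with a small weight absorbs into the previous display at the cost of a $C' a^{2h_q+1} + O(1)$ remainder. The third and most delicate step is the upper bound $\lambda_{p-r+q}(\widetilde{\W}(a2^j)) = O(a^{2h_q+1})$. A direct Weyl-type bound via $\|M_n\| = O(a^{h_r+1/2})$ is inadequate when $h_q$ is small, so I would proceed via the $QR$ decomposition $P(n) = Q(n)R(n)$ and Sylvester's identity: the nonzero eigenvalues of $P a^{\mathbf h + I/2} \B_a(2^j) a^{\mathbf h + I/2} P^*$ match those of the $r \times r$ matrix $R a^{\mathbf h + I/2} \B_a(2^j) a^{\mathbf h + I/2} R^*$, whose $q$-th smallest eigenvalue is of order $a^{2h_q+1}$ by an inductive deflation argument that isolates, at each stage, a dominant singular direction of order $a^{h_{r-k+1} + 1/2}$ from the upper-triangular factor. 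A Courant--Fischer application to $\widetilde{\W}$ using the $(p-r+q)$-dimensional subspace $V = \mathrm{null}(P^*(n)) \oplus \mathrm{span}\{\mathbf v_1(n), \ldots, \mathbf v_q(n)\}$, with $\mathbf v_1, \ldots, \mathbf v_q$ the eigenvectors of the principal part associated with its $q$ smallest nonzero eigenvalues, then yields the desired bound (the $M_n$ contribution on $V$ being controlled analogously to the second step).

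Combining the three ingredients in the Rayleigh identity $\boldsymbol{\mathfrak u}_{p-r+q}^* \widetilde{\W}\boldsymbol{\mathfrak u}_{p-r+q} = \lambda_{p-r+q}(\widetilde{\W})$ and absorbing the $(c_0/2)\sum_\ell w_\ell^2$ term into the left-hand side yields
$$
\sum_{\ell=1}^{r} \langle {\mathbf p}_\ell(n), \boldsymbol{\mathfrak u}_{p-r+q}(n)\rangle^2 a^{2h_\ell+1} \leq C_2 \, a^{2h_q+1}.
$$
In particular, for each $\ell \in \mathcal{I}_+$, $|\langle {\mathbf p}_\ell(n), \boldsymbol{\mathfrak u}_{p-r+q}(n)\rangle| \, a^{h_\ell - h_q} = O(1)$, which is the claim. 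The main obstacle is the inductive deflation step establishing the scaling of the $q$-th eigenvalue of $R a^{\mathbf h + I/2} \B_a(2^j) a^{\mathbf h + I/2} R^*$: the interplay between the upper-triangular factor $R$ and the diagonal scaling $a^{\mathbf h + I/2}$ prevents a direct Weyl- or interlacing-type bound, forcing a careful analysis (in the spirit of the companion paper) of how the dominant singular direction at each deflation stage corresponds to the largest remaining Hurst exponent.
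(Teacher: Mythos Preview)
This lemma is not proved in the present paper; it is imported verbatim from the companion manuscript (Abry et al., Lemma B.3), so there is no proof here to compare against. That said, your strategy is correct and would yield a valid argument. The core idea---combining the Rayleigh lower bound $\boldsymbol{\mathfrak u}_{p-r+q}^* \widetilde{\W}\,\boldsymbol{\mathfrak u}_{p-r+q} \geq \tfrac{c_0}{2}\sum_\ell w_\ell^2 - C$ (from positive definiteness of $\B_a(2^j)$ and Young's inequality on the $M_n$ cross terms) with the upper bound $\lambda_{p-r+q}(\widetilde{\W}) = O(a^{2h_q+1})$---does force $\sum_\ell \langle \mathbf p_\ell(n), \boldsymbol{\mathfrak u}_{p-r+q}(n)\rangle^2 a^{2h_\ell+1} = O(a^{2h_q+1})$ and hence the claim for each $\ell \in \mathcal I_+$.

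You are, however, over-complicating the step you yourself flag as the main obstacle. No inductive deflation is needed for the $r\times r$ eigenvalue bound. After the Sylvester reduction to $T_n := R(n)\, a^{\mathbf h + I/2}\, \B_a(2^j)\, a^{\mathbf h + I/2}\, R^*(n) \in {\mathcal S}_{>0}(r,\bbR)$, apply Courant--Fischer with the $q$-dimensional test subspace $V_q := (R^*(n))^{-1}\,\textnormal{span}\{e_1,\ldots,e_q\}$. Any $v \in V_q$ satisfies $R^*(n)v \in \textnormal{span}\{e_1,\ldots,e_q\}$, so $\|a^{\mathbf h + I/2} R^*(n)v\|^2 \leq a^{2h_q+1}\|R^*(n)v\|^2$ (since $h_1 \leq \ldots \leq h_q$ on those coordinates), whence $v^* T_n v \leq \|\B_a(2^j)\|\,\|R(n)\|^2\, a^{2h_q+1}\|v\|^2$. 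Boundedness of $\|R(n)\|$ and $\|\B_a(2^j)\|$ (via $(A5)$ and \eqref{e:|bfB_a(2^j)-B(2^j)|=O(shrinking)}) gives $\lambda_q(T_n) = O(a^{2h_q+1})$ in one line. Lifting back to $\bbR^p$ through the $(p-r+q)$-dimensional subspace $\textnormal{null}(P^*(n)) \oplus Q(n)V_q$ and bounding $M_n$ there exactly as you outline completes the eigenvalue upper bound. One minor point: the Young's-inequality remainder in your second step is simply $O(1)$, not $C' a^{2h_q+1} + O(1)$; the extra term is harmless but unnecessary.
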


The following lemma is used in the {proof of Lemma \ref{l:p-tilde_upk=o(a-w)}.}%
\begin{lemma}\label{l:sum<pi(n),up-r+q(n)>2-o(a-varpi)_first}
(Abry et al.\ \cite{abry:boniece:didier:wendt:2022:wavelet_eigenanalysis},  Lemma B.4) For each $n$, let
\begin{equation}\label{e:p_ell(n),ell=r+1,...,p}
\p_\ell(n), \quad \ell=r+1,\ldots,p,
\end{equation}
be an orthonormal basis for the nullspace of $P^*(n)$. Let $\varpi$ be as in \eqref{e:varpi_parameter}. Fix any $q \in\{ 1,\hdots,r\}$ and let ${\boldsymbol {\mathfrak u}}_{p-r+q}(n)$ denote a unit eigenvector associated with the $(p-r+q)$--th eigenvalue of $\widetilde{ \mathbf W}(a(n)2^j)$ as given in Lemma \ref{l:|<p3,uq(n)>|a(n)^{h_3-h_1}=O(1)}. Then,
\begin{equation}\label{e:sum<pi(n),up-r+q(n)>2-o(a-varpi)_first}
 \sum^{p}_{i=r+1}\langle {\mathbf p}_{  i}(n), {\boldsymbol {\mathfrak u}}_{p-r+q}(n)\rangle^2 = O\big(a(n)^{-(h_q+\frac{1}{2})}\big).
\end{equation}
\end{lemma}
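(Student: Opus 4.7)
The plan is to exploit the eigenvalue equation $\widetilde{\mathbf W}(a2^j)\,\boldsymbol{\mathfrak u}_{p-r+q}(n) = \lambda_{p-r+q}(\widetilde{\mathbf W})\,\boldsymbol{\mathfrak u}_{p-r+q}(n)$ and project both sides onto the nullspace of $P^*(n)$. Collect the orthonormal basis vectors in \eqref{e:p_ell(n),ell=r+1,...,p} into $Q_0(n)=(\p_{r+1}(n),\ldots,\p_p(n))$, so $Q_0^*(n)P(n)=0$. The target quantity is precisely $\|Q_0^*(n)\boldsymbol{\mathfrak u}_{p-r+q}(n)\|^2=\sum_{i=r+1}^p\langle\p_i(n),\boldsymbol{\mathfrak u}_{p-r+q}(n)\rangle^2$. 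Left-multiplying the eigenvalue equation by $Q_0^*(n)$ annihilates the principal rank-$r$ piece $P(n)a^{\mathbf h+I/2}\B_a(2^j)a^{\mathbf h+I/2}P^*(n)$ of $\widetilde{\mathbf W}$, leaving the identity $\lambda_{p-r+q}(\widetilde{\mathbf W})\,Q_0^*(n)\boldsymbol{\mathfrak u}_{p-r+q}(n)=Q_0^*(n)M_n\boldsymbol{\mathfrak u}_{p-r+q}(n)$.

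Next, I would expand $M_n$ using \eqref{e:def_Mn} as $M_n = O(1) + a^{1/2}P(n)a^{\mathbf h}O(1)+a^{1/2}O(1)^*a^{\mathbf h}P^*(n)$, and use $Q_0^*(n)P(n)=0$ once more to kill the middle term. Thus one is reduced to estimating $\|Q_0^*(n)O(1)\boldsymbol{\mathfrak u}_{p-r+q}(n)\|=O(1)$ for the first piece, and $a^{1/2}\|Q_0^*(n)O(1)^*a^{\mathbf h}P^*(n)\boldsymbol{\mathfrak u}_{p-r+q}(n)\|$ for the third. The key vector is $a^{\mathbf h}P^*(n)\boldsymbol{\mathfrak u}_{p-r+q}(n)$, whose $\ell$-th entry is $a^{h_\ell}\langle\p_\ell(n),\boldsymbol{\mathfrak u}_{p-r+q}(n)\rangle$. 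For $\ell\in\mathcal{I}_+$, Lemma \ref{l:|<p3,uq(n)>|a(n)^{h_3-h_1}=O(1)} supplies $|\langle\p_\ell(n),\boldsymbol{\mathfrak u}_{p-r+q}(n)\rangle|\,a^{h_\ell-h_q}=O(1)$, so the entry is $O(a^{h_q})$; for $\ell\in\mathcal{I}_-\cup\mathcal{I}_0$, the trivial bounds $|\langle\p_\ell,\boldsymbol{\mathfrak u}\rangle|\leq 1$ and $a^{h_\ell}\leq a^{h_q}$ yield the same $O(a^{h_q})$. Since $r$ is fixed, $\|a^{\mathbf h}P^*(n)\boldsymbol{\mathfrak u}_{p-r+q}(n)\|=O(a^{h_q})$, so the third piece contributes $O(a^{h_q+1/2})$.

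Combining the two pieces gives $\|Q_0^*(n)M_n\boldsymbol{\mathfrak u}_{p-r+q}(n)\|=O(a^{h_q+1/2})$. To finish, I would invoke the convergence $\lambda_{p-r+q}(\widetilde{\mathbf W}(a2^j))/a^{2h_q+1}\to \xi_q(2^j)>0$ (a deterministic counterpart of Theorem \ref{t:lim_n_a*lambda/a^(2h+1)}; this is exactly where the corollary cited as Corollary~\ref{c:PWXP^*+R_asymptotics} enters). Dividing through by $\lambda_{p-r+q}(\widetilde{\mathbf W})\asymp a^{2h_q+1}$ yields $\|Q_0^*(n)\boldsymbol{\mathfrak u}_{p-r+q}(n)\|=O(a^{-(h_q+1/2)})$, and squaring gives the advertised bound \eqref{e:sum<pi(n),up-r+q(n)>2-o(a-varpi)_first}.

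The main obstacle is the careful bookkeeping of scales: one must track how $a^{\mathbf h}$ interacts with $P^*\boldsymbol{\mathfrak u}$ across the partition $\mathcal I_-\cup\mathcal I_0\cup\mathcal I_+$, using the decay provided by Lemma~\ref{l:|<p3,uq(n)>|a(n)^{h_3-h_1}=O(1)} only where needed and trivial bounds elsewhere. A secondary subtlety is that the argument requires a deterministic lower bound on $\lambda_{p-r+q}(\widetilde{\mathbf W})$ of the correct order $a^{2h_q+1}$, so one must verify that the perturbation $M_n$ does not spoil the asymptotic eigenvalue scaling dictated by $P(n)a^{\mathbf h+I/2}\B_a(2^j)a^{\mathbf h+I/2}P^*(n)$; this is essentially a Weyl-type argument combined with Assumption~$(A5)$ and the fact that the second and third terms of $M_n$ have operator norms of order $a^{1/2}\|P(n)\|\cdot a^{h_r}\cdot O(1)$, negligible compared with the leading $a^{2h_q+1}$ contribution once the $\xi_q(2^j)>0$ limit is in force.
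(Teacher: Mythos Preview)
The paper does not include its own proof of this lemma; it is quoted verbatim from the reference Abry et al.\ (2022) as an auxiliary result. So there is no in-paper argument to compare against, and I can only assess your proposal on its own merits.

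Your core argument is correct and clean. Projecting the eigenvalue equation through $Q_0^*(n)$ kills the rank-$r$ principal part, and your entrywise bound $\|a^{\mathbf h}P^*(n)\boldsymbol{\mathfrak u}_{p-r+q}(n)\|=O(a^{h_q})$ via Lemma~\ref{l:|<p3,uq(n)>|a(n)^{h_3-h_1}=O(1)} on $\mathcal I_+$ and trivial bounds on $\mathcal I_-\cup\mathcal I_0$ is exactly right. One minor slip: after dividing by $\lambda_{p-r+q}\asymp a^{2h_q+1}$ you obtain $\|Q_0^*(n)\boldsymbol{\mathfrak u}_{p-r+q}(n)\|=O(a^{-(h_q+1/2)})$, and \emph{squaring} gives $O(a^{-(2h_q+1)})$, which is strictly \emph{stronger} than the stated $O(a^{-(h_q+1/2)})$. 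So your argument in fact overshoots the claim; the advertised bound follows a fortiori.

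Your final paragraph is the weak spot. The Weyl-type justification you sketch for the eigenvalue lower bound would need $\|M_n\|=o(a^{2h_q+1})$, but the cross terms of $M_n$ carry operator norm of order $a^{h_r+1/2}$, and $h_r+\tfrac12<2h_q+1$ fails whenever $h_q$ is small relative to $h_r$; so that route does not work in general. You do not actually need it if you are permitted to invoke Corollary~\ref{c:PWXP^*+R_asymptotics} as a black box --- but be aware of a potential circularity: in the original reference the numbering places Lemma~B.4 \emph{before} Corollary~B.1, so you should check that the latter's proof there does not rely on the former. If it does, you would have to supply an independent lower bound for $\lambda_{p-r+q}(\widetilde{\mathbf W})$, e.g.\ via the Courant--Fischer max--min over the $(r-q+1)$-dimensional subspace $\textnormal{span}\{\mathbf p_q(n),\ldots,\mathbf p_r(n)\}$, where the cross terms in $M_n$ contribute only $O(a^{h_r+1/2})$ against a main term of order $a^{2h_q+1}$ \emph{after} restriction to $\textnormal{range}\,P(n)$ --- a more delicate computation than plain Weyl.
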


The following lemma is used in the proofs of Proposition \ref{p:|lambdaq(EW)-xiq(2^j)|_bound_2} and Lemma \ref{l:p-tilde_upk=o(a-w)}.
It is a slight refinement of Lemma B.6 in Abry et al.\ \cite{abry:boniece:didier:wendt:2022:wavelet_eigenanalysis}. The main difference lies in the second statement in \eqref{e:<p,w>=infinitesimal_2}, which provides the rate at which some of the coefficients of the constructed vector (namely, ${\mathbf v}(n')$) shrink to zero as $n' \rightarrow \infty$. For the reader's convenience, we provide a proof of \eqref{e:<p,w>=infinitesimal_2}.
\begin{lemma}\label{l:<p,w>=infinitesimal}
Fix $q\in\{1,\ldots,r\}$, and for $\ell = 1,\hdots,p$, let ${\boldsymbol {\mathfrak u}}_{\ell}(n)$ be a unit eigenvector associated with the $\ell$--th eigenvalue of the matrix $\widetilde {\mathbf W}(a(n)2^j)$ as defined in  Lemma \ref{l:|<p3,uq(n)>|a(n)^{h_3-h_1}=O(1)}. For each $n$, let
\begin{equation}\label{e:def_gamma_n}
\boldsymbol \gamma_\ell(n):=P^*(n){\boldsymbol {\mathfrak u}}_{p-r+\ell}(n),\quad \ell=1,\ldots,r.
\end{equation}
Suppose $n' \in \bbN'\subseteq \bbN$ is a subsequence along which the limits
\begin{equation}\label{e:lim_na_P*(na)u(na)}
\lim_{{n'} \to\infty} {\boldsymbol \gamma}_\ell (n') =: {\boldsymbol \gamma}_\ell, \quad \ell=1,\ldots,r,
\end{equation}
exist. Fix any %
\begin{equation}\label{e:gamma-tilde}
\widetilde{\boldsymbol \gamma} \in \textnormal{span}\{ {\boldsymbol \gamma}_q,\ldots, {\boldsymbol \gamma}_{r-r_3}\}%
\end{equation}
and any vector
\begin{equation}\label{e:x*}
\mathbf{x}_{*} = (x_{*,{r-r_3+1}},\hdots,x_{*,r})^* \in \bbR^{r_3}.
\end{equation}
Then, we can pick a sequence of unit vectors
\begin{equation}\label{e:v_na}
{\mathbf v}(n') := \sum_{\ell=q}^r c_\ell(n) \boldsymbol{\mathfrak u}_{p-r+\ell}(n')\in \textnormal{span}\{\boldsymbol {\mathfrak u}_{p-r+q}(n'),\ldots,\boldsymbol {\mathfrak u}_{p}(n')\}
\end{equation}
satisfying
\begin{equation}\label{e:<p,w>=infinitesimal_2}
 P^*(n') {\mathbf v}(n')  \to \widetilde{{\boldsymbol \gamma}},\quad n' \rightarrow \infty, \qquad \sum_{\ell \in\mathcal I_+}c_\ell^2(n') = O(a(n')^{-2\varpi}).
\end{equation}
In \eqref{e:<p,w>=infinitesimal_2},
\begin{equation}\label{e:<p,w>=infinitesimal_3}
\langle \mathbf{p}_\ell(n'), {\mathbf v}(n') \rangle = \frac{x_{*,\ell}}{a(n')^{h_\ell-h_q}},\quad \ell\in \mathcal{I}_+.
\end{equation}
\end{lemma}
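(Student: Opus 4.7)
The plan is to construct $\mathbf v(n')$ explicitly as $\mathbf v(n') = \sum_{k=q}^r c_k(n')\, \boldsymbol{\mathfrak u}_{p-r+k}(n')$ with the coefficients split into two blocks: a \emph{main} block indexed by $k\in\{q,\ldots,r-r_3\}\subseteq \mathcal{I}_0$, whose role is to reproduce $\widetilde{\boldsymbol\gamma}$ in the limit, and a \emph{correction} block indexed by $k\in\mathcal{I}_+$, whose role is to enforce the pointwise equalities \eqref{e:<p,w>=infinitesimal_3}. Using \eqref{e:gamma-tilde} I would first expand $\widetilde{\boldsymbol\gamma}=\sum_{k=q}^{r-r_3}\widetilde c_k\, \boldsymbol\gamma_k$ and set $c_k(n'):=\widetilde c_k$ for $k\in\{q,\ldots,r-r_3\}$. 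The remaining $r_3$ coefficients $\mathbf c_+(n'):=(c_k(n'))_{k\in\mathcal{I}_+}$ are then uniquely determined by imposing \eqref{e:<p,w>=infinitesimal_3}, which amounts to solving an $r_3\times r_3$ linear system $G_+(n')\,\mathbf c_+(n') = \mathbf r(n')$, where $G_+(n')_{\ell k} = \langle \mathbf p_\ell(n'),\boldsymbol{\mathfrak u}_{p-r+k}(n')\rangle$ for $\ell,k\in\mathcal{I}_+$ and $\mathbf r(n')$ combines the prescribed values $x_{*,\ell}/a(n')^{h_\ell-h_q}$ with the main-block contribution to the $\ell$-th inner product.

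The crucial analytic step is to bound $\mathbf c_+(n')$ quantitatively. Applying Lemma \ref{l:|<p3,uq(n)>|a(n)^{h_3-h_1}=O(1)} with the generic index $q$ replaced by each $k\in\mathcal{I}_0\cup\mathcal{I}_+$ in turn gives $|\langle \mathbf p_\ell(n'),\boldsymbol{\mathfrak u}_{p-r+k}(n')\rangle| = O(a(n')^{-(h_\ell-h_k)})$ whenever $h_\ell>h_k$. In particular, every sub-diagonal entry of $G_+(n')$ vanishes in the limit, so $G_+(n')$ is asymptotically upper-triangular, and each component of $\mathbf r(n')$ at index $\ell$ is of order $O(a(n')^{-(h_\ell-h_q)})$. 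Provided the diagonal entries $\langle \mathbf p_\ell(n'),\boldsymbol{\mathfrak u}_{p-r+\ell}(n')\rangle$, $\ell\in\mathcal{I}_+$, are bounded away from zero, back-substitution then yields $|c_k(n')| = O(a(n')^{-(h_k-h_q)})$ for each $k\in\mathcal{I}_+$. Since $h_k-h_q\geq \varpi$ for $k\in\mathcal{I}_+$ by \eqref{e:varpi_parameter}, summing gives the desired rate $\sum_{k\in\mathcal{I}_+} c_k^2(n')=O(a(n')^{-2\varpi})$, i.e., the second part of \eqref{e:<p,w>=infinitesimal_2}.

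The convergence $P^*(n')\mathbf v(n')\to\widetilde{\boldsymbol\gamma}$ is then immediate from $P^*(n')\mathbf v(n') = \sum_{k=q}^{r-r_3}\widetilde c_k\,\boldsymbol\gamma_k(n') + \sum_{k\in\mathcal{I}_+}c_k(n')\,\boldsymbol\gamma_k(n')$, since $\boldsymbol\gamma_k(n')\to\boldsymbol\gamma_k$ by \eqref{e:lim_na_P*(na)u(na)} and the correction block contributes at most $O(a(n')^{-\varpi})$. The unit-norm requirement on $\mathbf v(n')$ is met by a bounded rescaling: since $\|\mathbf v(n')\|^2 = \sum_{k=q}^{r-r_3}\widetilde c_k^{\,2}+O(a(n')^{-2\varpi})$ tends to a positive constant (the trivial $\widetilde{\boldsymbol\gamma}=0$ case requiring no construction), the rescaling factor is asymptotically fixed and can be absorbed into the choice of $\widetilde{\boldsymbol\gamma}$ and $\mathbf x_*$, leaving \eqref{e:<p,w>=infinitesimal_2}--\eqref{e:<p,w>=infinitesimal_3} in force.

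The main obstacle is establishing the lower bound $|\langle \mathbf p_\ell(n'),\boldsymbol{\mathfrak u}_{p-r+\ell}(n')\rangle|\geq c > 0$ for each $\ell\in\mathcal{I}_+$, which is what makes the back-substitution quantitative. This is essentially the $\widetilde{\mathbf W}$-analog of Lemma \ref{l:p-tilde_upk=o(a-w)}, but the perturbation $M_n$ in \eqref{e:def_Mn} is not uniformly small in norm, so one cannot invoke that lemma directly. The remedy is to exploit the low-rank structure of $M_n$, together with the orthogonality of the basis $\{\mathbf p_i(n)\}_{i=1}^p$ and the small-angle estimate in Lemma \ref{l:sum<pi(n),up-r+q(n)>2-o(a-varpi)_first}, in order to prove the required alignment for $\widetilde{\mathbf W}$ as an intermediate ingredient. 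This extension is the one technical step that must be carried out carefully; once in place, the rest of the argument is a routine linear-algebraic assembly.
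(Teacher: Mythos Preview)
Your linear-system construction for the correction block $\mathbf c_+(n')$ is exactly the paper's: the matrix $G_+(n')$ is the paper's $\Gamma_n^+$, and your right-hand side $\mathbf r(n')$ is the paper's $\boldsymbol\vartheta_n$ minus the main-block contribution. The rate bound $\|\mathbf r(n')\|=O(a^{-\varpi})$ follows immediately from Lemma \ref{l:|<p3,uq(n)>|a(n)^{h_3-h_1}=O(1)} and the definition of $\boldsymbol\vartheta_n$, exactly as you say.

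There are, however, two places where you diverge from the paper in ways that either introduce a gap or unnecessary work.

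\textbf{Unit-norm constraint.} Your post-hoc rescaling does not preserve the \emph{exact} equalities \eqref{e:<p,w>=infinitesimal_3}: if you divide $\mathbf v(n')$ by $\rho_{n'}$ with $\rho_{n'}\neq 1$, then $\langle \mathbf p_\ell,\mathbf v/\rho_{n'}\rangle = x_{*,\ell}/(\rho_{n'}\,a^{h_\ell-h_q})$, not $x_{*,\ell}/a^{h_\ell-h_q}$. The claim that this can be ``absorbed into the choice of $\widetilde{\boldsymbol\gamma}$ and $\mathbf x_*$'' is not right, since $\mathbf x_*$ is a fixed input and $\rho_{n'}$ depends on $n'$. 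The paper handles this cleanly by building in a scalar parameter $s\in[0,1]$ from the start: it sets $c_\ell(n)=\alpha_\ell\, s$ for $\ell\in\mathcal I_0$, solves the same $r_3\times r_3$ system (now with right-hand side depending continuously on $s$), and uses the intermediate value theorem on $s\mapsto 1-s^2-\|\boldsymbol\varsigma_n(s)\|^2$ to find $s_*(n)$ making $\mathbf c(n)$ exactly unit norm. Since $\|\boldsymbol\varsigma_n(s)\|\to 0$ uniformly, $s_*(n)\to 1$, so the limit $P^*\mathbf v\to\widetilde{\boldsymbol\gamma}$ and the rate on $\mathbf c_+$ are unaffected, while \eqref{e:<p,w>=infinitesimal_3} holds exactly by construction.

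\textbf{Invertibility of $G_+(n')$.} You identify the diagonal lower bound $|\langle\mathbf p_\ell(n'),\boldsymbol{\mathfrak u}_{p-r+\ell}(n')\rangle|\geq c>0$ as ``the main obstacle'' and propose to attack it by adapting Lemma \ref{l:p-tilde_upk=o(a-w)} to $\widetilde{\mathbf W}$. This is harder than necessary. The paper observes that $\Gamma_n^+\to\Gamma^+$ by \eqref{e:lim_na_P*(na)u(na)} and simply cites a companion lemma (Lemma B.5 in Abry et al.\ \cite{abry:boniece:didier:wendt:2022:wavelet_eigenanalysis}) stating that the limit $\Gamma^+$ is nonsingular. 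That alone gives $\|(\Gamma_n^+)^{-1}\|=O(1)$, which combined with $\|\mathbf r(n')\|=O(a^{-\varpi})$ yields $\|\mathbf c_+(n')\|=O(a^{-\varpi})$ directly---no back-substitution, no entrywise control, no diagonal lower bound needed. Your asymptotic-upper-triangularity route would give a finer statement ($|c_k|=O(a^{-(h_k-h_q)})$ entrywise), but the lemma only asks for the aggregate $O(a^{-2\varpi})$ bound, for which the nonsingularity of the limit suffices.
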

\begin{proof}
We seek only to establish the second statement in \eqref{e:<p,w>=infinitesimal_2} since all other statements follow from the same argument (cf.\ Abry et al.\ \cite{abry:boniece:didier:wendt:2022:wavelet_eigenanalysis}, Lemma B.6). For notational simplicity, write $n=n'$. Also define
\begin{equation}\label{e:vartheta_n}
{\boldsymbol \vartheta}_n = \Big(\frac{x_{*,\ell}}{a(n)^{h_\ell-h_q}}\Big)_{\ell\in \mathcal{I}_+}\in \bbR^{r_3},
\end{equation}
which is the vector that contains the right-hand terms in \eqref{e:<p,w>=infinitesimal_3}. Note that
\begin{equation}\label{e:vartheta_n->0}
{\boldsymbol \vartheta}_n \rightarrow {\mathbf 0}, \quad n \rightarrow \infty.
\end{equation}
Consider the matrices $\Gamma_n\in M(r,\bbR)$ and $\Gamma_n^+ \in M(r_3,\bbR)$ given by
\begin{equation}\label{e:Xi_n}
\Gamma_n =\big(\langle\mathbf{p}_{\ell}(n), {\boldsymbol {\mathfrak u}}_{p-r+\ell'}(n)\rangle\big)_{1\leq \ell,\ell'\leq r},\quad \Gamma^+_n =\big(\langle\mathbf{p}_{\ell}(n), {\boldsymbol {\mathfrak u}}_{p-r+\ell'}(n)\rangle\big)_{\ell,\ell'\in\mathcal{I}_+}.
\end{equation}
By \eqref{e:lim_na_P*(na)u(na)}, as $n \rightarrow \infty$,
\begin{equation}\label{e:Gamma_n->Gamma}
\Gamma_n \to \Gamma :=({\boldsymbol \gamma}_1,\ldots,{\boldsymbol \gamma}_r).
\end{equation}
In particular, $\Gamma_n^+ \to \Gamma^+=\big( \Gamma_{\ell,\ell'} \big)_{\ell,\ell'\in\mathcal{I}_+}$. By Lemma B.5 in Abry et al.\ \cite{abry:boniece:didier:wendt:2022:wavelet_eigenanalysis}, $\Gamma^+$ is nonsingular. This implies that $\Gamma_n^+$ has full rank for all large $n$, without loss of generality assumed so for all $n$. Therefore, for each fixed $0 \leq s \leq 1$ and for the arbitrary scalars $\alpha_q,...,\alpha_{r-r_3}$ as in \eqref{e:gamma-tilde}, the system of equations
\begin{equation}\label{e:Xi*varsigma=vartheta-vec}
\Gamma_n^+ {\boldsymbol \varsigma} =  {\boldsymbol \vartheta}_n
-
s\cdot \Big( \sum_{\ell=q}^{r-r_3}\alpha_\ell\langle\mathbf{p}_{r-r_3+1}(n), {\boldsymbol {\mathfrak u}}_{p-r+
 \ell}(n)\rangle, \hdots, \sum_{\ell=q}^{r-r_3}\alpha_\ell\langle \mathbf{p}_{r}(n), {\boldsymbol {\mathfrak u}}_{p-r+\ell}(n) \rangle\Big)^*
\end{equation}
has a unique solution
\begin{equation}\label{e:varsigma_n(s)}
{\boldsymbol \varsigma}={\boldsymbol \varsigma}_n(s)= \big( \varsigma_{r-r_3+1,n}(s),\ldots, \varsigma_{r,n}(s) \big)^* \in \bbR^{r_3}.
\end{equation}
Note that, by Lemma \ref{l:|<p3,uq(n)>|a(n)^{h_3-h_1}=O(1)}, as $n \rightarrow \infty$,
\begin{equation}\label{e:<p_ell',u_p-r+ell>->0}
\langle \p_{\ell'}(n), {\boldsymbol {\mathfrak u}}_{p-r+\ell}(n)\rangle \to 0, \quad \ell' \in\mathcal{I}_+, \quad\ell \in \mathcal I_0.
\end{equation}
In view of \eqref{e:vartheta_n->0} and \eqref{e:<p_ell',u_p-r+ell>->0}, the vector on the right-hand side of \eqref{e:Xi*varsigma=vartheta-vec} tends to zero. Therefore, since $\Gamma_n^+$ has full rank, the solution ${\boldsymbol \varsigma}_n(s)$ to the system \eqref{e:Xi*varsigma=vartheta-vec} satisfies
\begin{equation}\label{e:|varsigma|->0}
\sup_{s \in [0,1]}\|{\boldsymbol \varsigma}_n(s)\| \to  0, \quad n \rightarrow \infty.
\end{equation}
Thus, for any small $\varepsilon > 0$, $0 \leq \sup_{s \in [0,1]}\|{\boldsymbol \varsigma}_n(s)\| < \varepsilon$ for large enough $n$. Now note that, for each $n$, the function $s\mapsto f(s) = 1-s^2 - \|{\boldsymbol \varsigma}_n(s)\|^2$ depends continuously on $s$. Moreover, $f(0) > 1 - \varepsilon$ and $f(1) = - \|{\boldsymbol \varsigma}_n(1)\|^2$. Hence, $f$ must have a root
\begin{equation}\label{e:s*(n)_in_(0,1]}
s_*(n)\in(0,1]
\end{equation}
with probability tending to 1. So, for one such root $s_*(n)\in(0,1]$, we can use expression \eqref{e:Xi*varsigma=vartheta-vec} to define the coefficients $c_\ell(n)$, $\ell\in \mathcal I_+$, by means of
\begin{equation}\label{e:varsigma_n=Gamma^+_n-inv*vec_0}
\begin{pmatrix} c_{r-r_3+1}(n) \\ \vdots \\ c_{r}(n)\end{pmatrix} = {\boldsymbol \varsigma}_n(s_*(n)) = ({\Gamma}^+_{n})^{-1}\left( {\boldsymbol \vartheta}_n
-
s_*(n) \cdot \left(\begin{matrix}
 \sum_{\ell=q}^{r-r_3}\alpha_\ell\langle\mathbf{p}_{r-r_3+1}(n), \boldsymbol {\mathfrak u}_{p-r+\ell}(n) \rangle  \\
\vdots \\
\sum_{\ell=q}^{r-r_3}\alpha_\ell\langle \mathbf{p}_{r}(n), \boldsymbol {\mathfrak u}_{p-r+\ell}(n) \rangle
\end{matrix}\right)\right).
\end{equation}
(cf. expression (B.51) in Abry et al.~\cite{abry:boniece:didier:wendt:2022:wavelet_eigenanalysis}). Further define $c_1(n)=\ldots=c_{r_1}(n)=0$, and writing $\widetilde{\boldsymbol \gamma} = \alpha_q \boldsymbol \gamma_q + \ldots + \alpha_r\boldsymbol \gamma_r$, define $c_\ell(n)=\alpha_\ell s_*(n)$ for $\ell \in \mathcal I_0$ (cf.\ (B.49) in Abry et al.\ \cite{abry:boniece:didier:wendt:2022:wavelet_eigenanalysis}). In particular, the resulting vector ${\mathbf c}(n) = (c_1(n),\hdots,c_r(n))^*$ of coefficients appearing in \eqref{e:v_na} is a unit vector. For the purpose of establishing the second statement in \eqref{e:<p,w>=infinitesimal_2}, we focus on studying the decay rate of the entries $c_\ell(n)$, $\ell \notin \mathcal I_+$. By construction, $\|{\boldsymbol \vartheta}_n\| =O(a^{-\varpi})$. In addition, for $\ell \in \mathcal I_0$ and $i\in \mathcal I_+$, $\langle \p_i(n),\boldsymbol{\mathfrak u}_{p-r+\ell}(n) \rangle = O(a^{h_i-h_\ell})=O(a^{-\varpi})$ due to Lemma \ref{l:|<p3,uq(n)>|a(n)^{h_3-h_1}=O(1)}. Moreover, \eqref{e:|varsigma|->0} further implies that the root \eqref{e:s*(n)_in_(0,1]} satisfies $s_*(n) \to 1$ as $n \rightarrow \infty$. Thus, from \eqref{e:varsigma_n=Gamma^+_n-inv*vec_0}, we obtain
$$
 \sum_{\ell \in\mathcal I_+} c_\ell(n)^2 \leq \| (\Gamma_n^+)^{-1}\|^2 \hspace{0.5mm}\| O(a^{-\varpi}) + O(a^{-\varpi})\|^2 = O(a^{-2\varpi}),
$$
as was to be shown. $\Box$\\
\end{proof}

The following corollary and proposition are used in the proof of Proposition \ref{p:|lambdaq(EW)-xiq(2^j)|_bound_2}.
\begin{corollary}\label{c:PWXP^*+R_asymptotics}
(a consequence of Corollary B.1 in Abry et al.\ \cite{abry:boniece:didier:wendt:2022:wavelet_eigenanalysis}) Let $\widetilde{ \mathbf W}(a(n)2^j)$ be as given in Lemma \ref{l:|<p3,uq(n)>|a(n)^{h_3-h_1}=O(1)}.  Then, $\lambda_{p-r+\ell}\Big(\frac{\widetilde{\W}(a(n)2^j)}{a(n)^{2h_\ell+1}}\Big) \to \xi_{\ell}(2^j),$ $\ell=1,\ldots,r$. In particular,
\begin{equation}\label{e:lambda_p-r+q->xi_based_on_residual_Rq(n)_Ba}
\lambda_{p-r+\ell}\Big(\frac{\widetilde{\W}(a(n)2^j)}{a(n)^{2h_q+1}}\Big) \rightarrow \begin{cases}
0, & \ell \in \mathcal I_-;\\
 \xi_{q}(2^j), & \ell \in \mathcal I_0;\\
 \infty, & \ell \in \mathcal I_+,
\end{cases}
\quad n \rightarrow \infty.
\end{equation}
In \eqref{e:lambda_p-r+q->xi_based_on_residual_Rq(n)_Ba}, $\xi_{q}(2^j)$ are the functions \eqref{e:lim_n_a*lambda/a^(2h+1)} appearing in Theorem \ref{t:lim_n_a*lambda/a^(2h+1)}.
\end{corollary}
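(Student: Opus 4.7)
The approach is to read the corollary as a direct specialization of Corollary B.1 in Abry et al.~\cite{abry:boniece:didier:wendt:2022:wavelet_eigenanalysis}, which establishes the asymptotic scaling of eigenvalues for wavelet random matrices of the generic ``signal plus noise'' form. The crux of the plan is to recognize $\widetilde{\W}(a(n)2^j)$ as a (largely deterministic) instance of such a signal-plus-noise matrix, and then to match its two pieces against the hypotheses of that corollary.

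I would first identify the ``signal'' component. Under the convention \eqref{e:P(n)PH_equiv_P} and the diagonal Jordan form \eqref{e:H_is_diagonalizable}, the term
$$
P(n)\, a(n)^{\mathbf h+(1/2)I}\, \B_a(2^j)\, a(n)^{\mathbf h+(1/2)I}\, P^*(n)
$$
is exactly the structural analog of $P(n)\,\E\W_X(a(n)2^j)\,P^*(n)$ appearing in \eqref{e:W=PWXP*+WZ+PWXZ+WXZ*P*}. Next, absorbing the $a(n)^{2h_q+1}$ prefactor in \eqref{e:def_Mn} into the three inner quotients yields
$$
M_n = O(1) + P(n)\, a(n)^{\mathbf h+(1/2)I}\, O(1) + O(1)^*\, a(n)^{\mathbf h+(1/2)I}\, P^*(n),
$$
which mirrors exactly the structure of $\W_Z(a(n)2^j) + P(n)\W_{X,Z}(a(n)2^j) + \W_{X,Z}^*(a(n)2^j)P^*(n)$ in \eqref{e:W=PWXP*+WZ+PWXZ+WXZ*P*}. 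The boundedness of the $O(1)$ factors is the deterministic analog of the $O_{\bbP}(1)$ bounds \eqref{e:assumptions_WZ=OP(1)} of Assumption $(A2)$. This places $\widetilde{\W}(a(n)2^j)$ squarely in the setting of Corollary B.1 of Abry et al.~\cite{abry:boniece:didier:wendt:2022:wavelet_eigenanalysis}, whose conclusion delivers the first claim,
$$
\lambda_{p-r+\ell}\bigl(\widetilde{\W}(a(n)2^j)\bigr)\big/ a(n)^{2h_\ell+1}\to \xi_\ell(2^j), \qquad \ell=1,\ldots,r.
$$

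The ``in particular'' statement \eqref{e:lambda_p-r+q->xi_based_on_residual_Rq(n)_Ba} then follows from the elementary factorization
$$
\frac{\lambda_{p-r+\ell}\bigl(\widetilde{\W}(a(n)2^j)\bigr)}{a(n)^{2h_q+1}} = \frac{\lambda_{p-r+\ell}\bigl(\widetilde{\W}(a(n)2^j)\bigr)}{a(n)^{2h_\ell+1}} \cdot a(n)^{2(h_\ell-h_q)},
$$
upon observing that the first factor converges to the positive constant $\xi_\ell(2^j)$ while, as $a(n)\to\infty$, the second factor tends to $0$, to $1$, or to $\infty$ according to whether $\ell\in \mathcal{I}_-$, $\ell\in \mathcal{I}_0$, or $\ell\in \mathcal{I}_+$, respectively. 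In the tied case $\ell\in\mathcal I_0$ the limit is $\xi_\ell(2^j)$, identified here with the common label $\xi_q(2^j)$ since $h_\ell=h_q$.

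The main obstacle I anticipate is purely bookkeeping: verifying that the three scalar prefactors in \eqref{e:def_Mn} combine correctly with the $a(n)^{\mathbf h}$ diagonal blocks to reproduce exactly the normalization and the three-term structure required by Corollary B.1 of the reference. Once this algebraic identification is in place, no further analytic work is needed, as the existence and positivity of the limits $\xi_\ell(2^j)$ are inherited from the cited result.
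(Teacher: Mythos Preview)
Your approach is correct and matches the paper's own treatment: the paper does not prove this corollary independently but simply records it as ``a consequence of Corollary B.1 in Abry et al.~\cite{abry:boniece:didier:wendt:2022:wavelet_eigenanalysis}'', and your identification of the signal term $P(n)a(n)^{\mathbf h+(1/2)I}\B_a(2^j)a(n)^{\mathbf h+(1/2)I}P^*(n)$ and the residual $M_n$ with the generic signal-plus-noise structure is exactly the intended reading. The factorization argument for the ``in particular'' part is straightforward and correct.

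One small caveat: your justification that $\xi_\ell(2^j)$ can be ``identified with the common label $\xi_q(2^j)$ since $h_\ell=h_q$'' is not quite right in general. Equal scaling exponents $h_\ell=h_q$ do not force $\xi_\ell(2^j)=\xi_q(2^j)$; indeed, under condition $(ii)$ of Theorem~\ref{t:h-hat_asympt_normality} these are explicitly assumed distinct. The correct limit for $\ell\in\mathcal I_0$ is $\xi_\ell(2^j)$, and the paper's use of the single symbol $\xi_q(2^j)$ in \eqref{e:lambda_p-r+q->xi_based_on_residual_Rq(n)_Ba} should be read either as shorthand for the case $\mathcal I_0=\{q\}$ (which is the setting of condition $(i)$, where the corollary is chiefly applied) or as a slight abuse of notation. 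This does not affect the validity of your argument.
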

\begin{proposition}(a consequence of Proposition B.1 in Abry et al. \cite{abry:boniece:didier:wendt:2022:wavelet_eigenanalysis}) \label{p:|lambdaq(EW)-xiq(2^j)|_bound}
Fix $j \in \bbN$ and suppose conditions $(W1-W4)$ and $(A1-A5)$ hold. Further assume that either $(i)$ $0<h_1<\ldots<h_r<1$; or
$(ii)$ $h_1=\ldots=h_r$ and the functions $\xi_q(2^j)$ in \eqref{e:lim_n_a*lambda/a^(2h+1)} satisfy
\begin{equation}\label{e:xiq_distinct_2}
q_1\neq q_2 \Rightarrow \xi_{q_1}(1)\neq \xi_{q_2}(1).
\end{equation}
Let $\widetilde \W(a(n)2^j)$ be as given in Lemma \ref{l:|<p3,uq(n)>|a(n)^{h_3-h_1}=O(1)}.  Then, for each $q\in\{1,\ldots,r\}$, there is a sequence of $(p-r+q)$--th unit eigenvectors $\{{\boldsymbol{\mathfrak u}}_{p-r+q}(n)\}_{n\in\bbN}$ of $\widetilde{\mathbf W}(a(n)2^j)$ along which the limits
\begin{equation}\label{e:<p,u>_to_gamma}
\lim_{n\to\infty} P^*(n) {\boldsymbol{\mathfrak u}}_{p-r+q}(n)=: \boldsymbol{\gamma}_{q}
\end{equation}
exist. %
\end{proposition}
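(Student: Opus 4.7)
My plan is a compactness argument: bound the projected eigenvectors in $\bbR^r$ and invoke Bolzano--Weierstrass, with the conditions $(i)$ and $(ii)$ playing a structural (rather than topological) role and being inherited from the proof of Proposition~B.1 in Abry et al.\ (2022).

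First I would verify boundedness of $\{P^*(n)\boldsymbol{\mathfrak u}_{p-r+q}(n)\}_n \subset \bbR^r$. By Assumption $(A5)$, the decomposition $P(n)P_H = Q(n)R(n)$ has $Q(n)$ with orthonormal columns and $\|R(n)-R\| = O(a(n)^{-\varpi})$, so $\sup_n \|R(n)\| < \infty$ and thus $\sup_n \|P(n)\| \leq \sup_n \|R(n)\|\cdot \|P_H^{-1}\| < \infty$. Combined with $\|\boldsymbol{\mathfrak u}_{p-r+q}(n)\| = 1$, this gives a uniform bound. Then Bolzano--Weierstrass yields a subsequence along which $P^*(n)\boldsymbol{\mathfrak u}_{p-r+1}(n) \to \boldsymbol{\gamma}_1$; iterating for $q = 2,\ldots,r$ with a standard diagonal extraction produces a common subsequence along which all $r$ limits $\boldsymbol{\gamma}_q$ exist, which I relabel as $n\in\bbN$ to match the statement.

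The conditions $(i)$ and $(ii)$ enter only to ensure that the eigenvector selection is canonical enough for the limits to be meaningful. In case $(i)$, distinct Hurst exponents force the scale-compensated eigenvalues of $\widetilde\W(a(n)2^j)$ to separate at the distinct rates $a(n)^{2h_q+1}$; Lemma~\ref{l:|<p3,uq(n)>|a(n)^{h_3-h_1}=O(1)} then controls the quantities $\langle\mathbf{p}_\ell(n),\boldsymbol{\mathfrak u}_{p-r+q}(n)\rangle\cdot a(n)^{h_\ell-h_q}$ for $\ell\in\mathcal{I}_+$, so each eigenvector asymptotically aligns with its corresponding scaling direction. In case $(ii)$, distinctness of the values $\xi_{q_1}(1)\neq \xi_{q_2}(1)$ renders the spectrum of $\widetilde\W(a(n)2^j)/a(n)^{2h_1+1}$ asymptotically simple, pinning down each $\boldsymbol{\mathfrak u}_{p-r+q}(n)$ up to sign for large $n$.

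The main obstacle is thus not the compactness step but the potential non-canonical nature of eigenvectors at finite $n$: when eigenvalues coincide, eigenvectors are only determined up to orthogonal rotations within the associated eigenspace, which would detach the limiting directions $\boldsymbol{\gamma}_q$ from the underlying scaling structure. Either $(i)$ or $(ii)$ rules this out asymptotically, which is precisely why the proposition carries one of these hypotheses, and why the detailed analysis of both regimes undertaken in Proposition~B.1 of Abry et al.\ (2022) can be invoked directly to complete the proof.
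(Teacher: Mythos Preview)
The paper does not prove this proposition; it is recorded as a direct consequence of Proposition~B.1 in Abry et al.~(2022), so there is no paper-side argument to compare against. Your compactness approach is the natural one and is surely what underlies the cited result: boundedness of $P^*(n)\boldsymbol{\mathfrak u}_{p-r+q}(n)$ follows exactly as you say, and Bolzano--Weierstrass yields subsequential limits.

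There is, however, a genuine gap in your write-up at the ``relabel as $n\in\bbN$'' step. Compactness by itself only gives convergence along a subsequence; it does not entitle you to relabel and claim a full-sequence limit, which is what the proposition asserts and what the proof of Proposition~\ref{p:|lambdaq(EW)-xiq(2^j)|_bound_2} subsequently uses. To upgrade subsequential to full-sequence convergence you must show that every subsequential limit coincides (up to sign), and this is exactly where hypotheses $(i)$ and $(ii)$ do their real work --- not merely to make the eigenvector ``canonical enough for the limits to be meaningful,'' as you put it, but to force uniqueness of the limiting direction. In case $(ii)$, for instance, one argues that $Q^*(n)\boldsymbol{\mathfrak u}_{p-r+q}(n)$ must converge (after sign adjustment) to the $q$-th eigenvector of the limiting matrix $R\mathbf B(2^j)R^*$, whose spectrum is simple by \eqref{e:xiq_distinct_2}; continuity of eigenprojections at simple eigenvalues then gives full-sequence convergence of $P^*(n)\boldsymbol{\mathfrak u}_{p-r+q}(n)=R^*(n)Q^*(n)\boldsymbol{\mathfrak u}_{p-r+q}(n)$. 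Case $(i)$ requires the more delicate layered analysis you allude to via Lemma~\ref{l:|<p3,uq(n)>|a(n)^{h_3-h_1}=O(1)}. Your final paragraph gestures in the right direction but stops short of closing the loop between asymptotic spectral simplicity and uniqueness of the subsequential limit; that connection needs to be made explicit for the proof to go through.
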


\section{Conflicts of interest}
On behalf of all authors, the corresponding author states that there is no conflict of interest.
\section{Data availability statement}
The datasets generated during and/or analyzed during the current study are available from the corresponding author on reasonable request.

\bibliographystyle{agsm}

\bibliography{highdim}

\end{document}